\definecolor{lightGray}{RGB}{235,235,235}
\definecolor{orange}{RGB}{255,128,0}
\definecolor{ucib}{RGB}{0,36,105}
\definecolor{mygreen}{RGB}{0,128,0}
\definecolor{lightBlue}{RGB}{102,153,204}
\newtheorem{thm}{Theorem}[section]
\newtheorem{lem}[thm]{Lemma}
\newtheorem{prop}[thm]{Proposition}
\newtheorem{conj}[thm]{Conjecture}
\newtheorem{rems}[thm]{Remarks}
\newtheorem{rem}[thm]{Remark}
\newtheorem{deff}[thm]{Definition}
\numberwithin{equation}{section}
\begin{document}
\bibliographystyle{plain}

\title[A Parabolic Oscillator]{On Wiener's Violent Oscillations,
  Popov's curves and Hopf's Supercritical Bifurcation for a Scalar
  Heat Equation}

\author{Patrick Guidotti}
\author{Sandro Merino}
\address{University of California, Irvine\\
Department of Mathematics\\
340 Rowland Hall\\
Irvine, CA 92697-3875\\ USA }

\address{Basler Kantonalbank \\
Brunngaesslein 3\\ 
CH-4002 Basel\\
Switzerland}
\email{gpatrick@math.uci.edu and sandro.merino@bkb.ch}

\begin{abstract}
A parameter dependent perturbation of the spectrum of the scalar
Laplacian is studied for a class of nonlocal and non-self-adjoint rank
one perturbations.  
A detailed description of the perturbed spectrum is obtained both for 
Dirichlet boundary conditions on a bounded interval as well as for the
problem on the full real line.
The perturbation results are applied to the study of a related
parameter dependent nonlinear and nonlocal  
parabolic equation. The equation models a feedback system that
e.g. can be interpreted as a thermostat device  
or in the context of an agent based price formation model for a market. 
The existence and the stability of periodic self-oscillations of the
related nonlinear and nonlocal heat equation that  
arise from a Hopf bifurcation is proved. The bifurcation and stability
results are obtained both for the nonlinear parabolic  
equation with Dirichlet boundary conditions and for a related problem
with nonlinear Neumann boundary conditions that  
model feedback boundary control. 
The bifurcation and stability results follow from a Popov criterion
for integral equations after reducing the  
stability analysis for the nonlinear parabolic equation to the study
of a related nonlinear Volterra integral equation. 
While the problem is studied in the scalar case only it can be
extended naturally to arbitrary euclidean dimension and  
to manifolds.           
\end{abstract}

\keywords{nonlinear reaction diffusion systems, nonlocal 
  nonlinearity, nonlinear feedback control systems, Popov criterion,
  Volterra integral equation, Hopf bifurcation}
\subjclass[2010]{35B10, 35B32, 25B35, 35K20, 35K55, 35K57, 35K58}

\maketitle

\section{Introduction}
In \cite{GM97} the authors consider a simple model of a
one-dimensional temperature control system given by
\begin{equation}\label{tsEq}
  \begin{cases}
    u_t-u_{xx}=0&\text{in }(0,\infty)\times(0,\pi),\\
    u_x(t,0)=\tanh \bigl( \beta u(t,\pi) \bigr)&\text{for }t\in(0,\infty),\\
    u_x(t,\pi)=0&\text{for }t\in(0,\infty),\\
    u(0,\cdot)=u_0&\text{in }(0,\pi),
  \end{cases}
\end{equation}
where heat is injected/removed from the interval $[0,\pi]$ at the left
endpoint $x=0$ based on a temperature measurement taken at the other
endpoint $x=\pi$. The system is controlled by the parameter $\beta>0$
which models the intensity of the heat injection/removal. The trivial
solution $u\equiv 0$ represents the desired equilibrium state of the
system. As it turns out, it can only be obtained with certainty
(and independently of the initial state) up to
the critical parameter value $\beta _0\approx 5.6655$, at which a Hopf
bifurcation occurs causing the loss of linear stability of the trivial
steady-state and the appearance of periodic solutions. 
The problem first introduced in \cite{GM97} was inspired by a remark in 
N. Wiener's book ``Cybernetics''  on possible violent temperature
oscillations for a badly designed thermostat device that is quoted in
\cite{GM97}. Subsequently the problem received attention in a series
of papers \cite{CBOP03}, \cite{DG18}, \cite{DGK16}, \cite{GeWe06(1)}, 
\cite{GeWe06(2)}, \cite{KmcK04}, \cite{L18} mainly due to its novelty and the 
interesting properties hidden behind its apparent simplicity.\\ 
The Hopf bifurcation phenomenon engendered by the nonlocal nature of the 
boundary condition has also inspired an application, presented in \cite{GoGuSo13},  
to a market price formation model introduced by J.M. Lasry and P.L. Lions. 
In particular, in that specific context a similar Hopf bifurcation scenario 
shows that ``demand" and ``supply" do not simply lead to unique equilibrium prices 
but can produce price oscillations. The phenomenon emerges on the basis of the 
modelled behaviour of the population densities of buyers and sellers 
positioned in a liquid market over a continuum of prospective transaction prices. \\
Problem \eqref{tsEq} can be
conventiently weakly formulated as the abstract Cauchy problem
$$
\begin{cases}
  \dot u +Au = -\tanh\bigl(\beta u(t,\pi)\bigr)\delta_0,&t>0,\\
  u(0)=u_0,&
\end{cases}
$$
in the space $\operatorname{H}^{-1}=\operatorname{H}^1(0,\pi)'$,
where the unbounded operator $A$ defined on $\operatorname{H}^{-1}$ and with
domain $\operatorname{dom}(A)=\operatorname{H}^1(0,\pi)$ is the one
induced by the Dirichlet form $a(u,v)=\int_0^\pi u_xv_x\, dx$ defined
on the product space $\operatorname{H}^1(0,\pi)\times
\operatorname{H}^1(0,\pi)$. We refer to \cite{GM97} and \cite{GM99} for additional
details.

In this paper, taking inspiration from that model we consider the
following heat conduction problem
\begin{equation}\label{lineTs}
  \begin{cases}
    u_t+A_Lu=-f\bigl(\beta \langle \delta_{x_0},u \rangle \bigr)\delta_0, &
    t>0,\\
    u(0)=u_0,&
  \end{cases}
\end{equation}
for the unbounded operator $A_L:\operatorname{H}^1_L\subset
\operatorname{H}^{-1}_L\to\operatorname{H}^{-1}_L$,
where it holds that $\operatorname{H}^1_L:=\operatorname{H}^1_0\bigl(
(-L,L)\bigr)$ and that $\operatorname{H}^{-1}_L:=\operatorname{H}^1_0\bigl(
(-L,L)\bigr)'=\operatorname{H}^{-1} \bigl( (-L,L)\bigr)$, and where $A_L$
is the operator induced by the Dirichlet form
$$
 a:\operatorname{H}^1_L\times\operatorname{H}^1_L\to
 \mathbb{R},\: (u,v)\mapsto a(u,v)=\int_{-L}^L u_xv_x\, dx
$$
on the interval $(-L,L)$ with $L\in(0,\infty]$ and for a smooth
bounded globally Lipschitz non-linearity $f$ satisfying the conditions
$f(0)=0$, $f'(0)=1$, and
$\operatorname{sign}(f)=\operatorname{sign}(\operatorname{id}_\mathbb{R})$. We
also assume, without loss of any generality, that $x_0\in(0,L)$.
The Cauchy problem \eqref{lineTs} can be thought of as a heat conduction
model with a source placed in the origin which is controlled by a
temperature measurement at another point $x_0$ in the domain.\\
The rest of the paper is organized as follows. Sections 2 and 3 are
devoted to the study of the linear problems for $L=\infty$ and
$L<\infty$, respectively. In particular, a detailed understanding of
the dependence of the spectrum of $A_{\beta,L}=A_L+\beta \delta_0
\delta_{x_0}^\top$ on the parameter $\beta$ is obtained. The main
results of this paper require the preparatory ground work of Section
4 on the instrumental Volterra integral equation associated with
\eqref{lineTs}. They are given in Section \ref{SectionVIE}, Theorem
\ref{vie_L_result} for the Volterra integral equation and, in Section
\ref{SecHopfRes}, Theorems \ref{Main_Result_L} and
\ref{Stability_Periodic_L}, for the nonlinear heat equation
\eqref{lineTs}. In Section \ref{SecHopfRes} we also state Theorem
\ref{Stability_Periodic_Neumann} that settles a 
conjecture of \cite{GM97} and that motivates the approach described in
\cite{GM20} and the analysis performed in the present article. The main
results are valid for $L<\infty$ only. The case when $L=\infty$ poses
additional difficulties and may be the subject of further
research. One difficulty incurred when $L=\infty$ is the fact that the
continuous spectrum of the linearization is not bounded away from the
imaginary axis. Nevertheless a partial investigation of the case $L=\infty$
is included since it is simpler, in certain aspects, and contributes
to the understanding of the case $L<\infty$.

%%% NEW SECTION

\section{The Linear Problem on the Real Line}
We first consider the linearized problem obtained by choosing
$f=\operatorname{id}_\mathbb{R}$. This amounts to understanding the
operator
\begin{equation}\label{abeta}
A_{L,\beta}=A_L+\beta \delta_0 \delta_{x_0}^\top:\operatorname{H}^1_L\subset
\operatorname{H}^{-1}_L\to \operatorname{H}^{-1}_L,
\end{equation}
where we use the suggestive notation $\delta_{x_0}^\top$ for the
trace/evaluation operator $\gamma_{x_0}$ at the point $x_0$. The
operator $A_{L,\beta}$ is a relatively bounded rank one perturbation of
$A_L$ by $B=\beta \delta_0 \delta_{x_0}^\top$, for which it is
well-known that $-A_L$ generates an analytic
$c_0$-semigroup $T_L(t)=e^{-tA_L}$ on $\operatorname{H}^{-1}_L$. When
the case $L=\infty$ is considered, the index will be dropped for simplicity so
that, e.g., $A$ and $\operatorname{H}^{\pm 1}$ will be used instead of
$A_\infty$ and $\operatorname{H}^{\pm 1}_\infty$, if a more consistent
notation were to be applied. When $L=\infty$, it is well-known that
$$
 e^{-tA}u_0=\frac{1}{\sqrt{4\pi t}}e^{-\frac{|\cdot|^2}{4t}}*u_0,\:
 u_0\in \operatorname{L}^1(\mathbb{R}),
$$
whereas the case $L<\infty$ will be discussed in more detail
later. The perturbation $B$ satisfies
$$
 B\in \mathcal{L} \bigl(\operatorname{H}^{\frac{1}{2}+\varepsilon}_L,
 \operatorname{H}^{-\frac{1}{2}-\varepsilon}_L\bigr),
$$
for any $\varepsilon\in(0,\frac{1}{2}]$ and any $L\in(0,\infty]$, due
to the embedding $\operatorname{H}^{\frac{1}{2}+\varepsilon}_L\hookrightarrow
\operatorname{BUC(-L,L)}$ and due to $\delta_0\in
\operatorname{H}^{-\frac{1}{2}-\varepsilon}_L$.
The notation $\operatorname{BUC}(-L,L)$ refers to the space of
bounded and uniformly continuous real-valued functions defined on
$(-L,L)$. The following simple remark is useful for the case $L=\infty$.
\begin{rem}
It holds that $\operatorname{H}^1\hookrightarrow
\operatorname{C}_0(\mathbb{R})$. This follows from the fact that
$(1+|\cdot|^2)^{\frac{1}{2}}\hat u\in \operatorname{L}^2(\mathbb{R})$
by the definition of $\operatorname{H}^1$, which, in turn yields
$$
 \int |\hat u(\xi)|\, d\xi\leq \bigl[ \int (1+|\xi|^2)|\hat u(\xi)|^2\,
 d\xi\bigr]^{\frac{1}{2}}\bigl[\int\frac{1}{1+|\xi|^2}\, d\xi\bigr]
 ^{\frac{1}{2}} \leq c\| u\| _{\operatorname{H}^1}
$$
The Riemann-Lebesgue Lemma then gives the claimed embedding since
$$
 u=\mathcal{F} ^{-1}(\hat u)=\widetilde{\mathcal{F}(\hat u)},
$$
where $\tilde f(x):=f(-x)$ and 
$\hat f(\omega)\equiv\mathcal{F}(f)(\omega):=\int e^{-ix\omega}f(x)\, dx$ is the
standard Fourier transform. 
\end{rem}
%%%%%%%%%%%%%%%%%%%%%%%%%%%%%%%%%%%%%%%%%%%%%%%%%%%%%%%%%%%%%%%%%%%%%%%%%%%%%%%%%%%%%%%%%%%%%%%
%Comment
%%%%%%%%%%%%%%%%%%%%%%%%%%%%%%%%%%%%%%%%%%%%%%%%%%%%%%%%%%%%%%%%%%%%%%%%%%%%%%%%%%%%%%%%%%%%%%%
%{\color{blue} May need to remove the constant for consistency.}
%{\color{red} I removed the constant, this means adopting the same convention as in our submitted paper. 
%In this convention the constant $\frac{1}{2\pi}$ then appears in the Parseval-Plancherel theorem. 
%This is then like in Amann's book on abstract linear theory. I hope this choice is also consistent with all other sections in this text.}
%%%%%%%%%%%%%%%%%%%%%%%%%%%%%%%%%%%%%%%%%%%%%%%%%%%%%%%%%%%%%%%%%%%%%%%%%%%%%%%%%%%%%%%%%%%%%%%
%Comment
%%%%%%%%%%%%%%%%%%%%%%%%%%%%%%%%%%%%%%%%%%%%%%%%%%%%%%%%%%%%%%%%%%%%%%%%%%%%%%%%%%%%%%%%%%%%%%%
Returning to the operator $B$, we see that it is indeed a relatively
bounded perturbation thanks to the interpolation inequality for Bessel
potential spaces which yields
$$
\| Bu\|_{\operatorname{H}^{-1}_L}\leq c\, |u(x_0)|\leq
c\, \| u\| _{\operatorname{H}^{\frac{1}{2}+\varepsilon}_L}\leq 
c\,\| u\|_{\operatorname{H}^{-1}_L}^{\frac{1}{4}+2 \varepsilon }
\| u\|_{\operatorname{H}^1_L}^{\frac{3}{4}-2 \varepsilon}
\leq\delta \| u\|
 _{\operatorname{H}^{1}_L}+ c_\delta \| u\|_{\operatorname{H}^{-1}_L},
$$
which is valid for any $\delta>0$ by appropriate choice of
the constant $c_\delta>0$. It then follows from a classical perturbation result for
generators of analytic semigroups (see \cite[Theorem 2.4 on page 499]{Ka80}) that $A_\beta$ also
generates such a semigroup on $\operatorname{H}^{-1}_L$ for any
$\beta\in \mathbb{R}$. In \cite{DS88}, Desch and Schappacher show directly
that relatively bounded rank one perturbations of generators of
analytic $c_0$-semigroups preserve the generation property. They also
show that this is not the case for non-analytic semigroups and, in fact,
leads to an alternative characterization of analyticity of a
semigroup. Later in \cite{AR91}, Arendt and Randy show that positive rank
one perturbations of the generator of a holomorphic semigroup
preserve not only the generation property but also positivity. They
approach the problem via resolvent positivity which, for a given linear
operator $C:\operatorname{dom}(C)\subset E\to E$ amounts to the
validity of 
$$
(\lambda-C):\operatorname{dom}(C)\to E\text{ is bijective and }
(\lambda-C)^{-1}\text{ is positive for }\lambda>\omega,
$$
for some $\omega\in \mathbb{R}$ and characterizes positivity of the
corresponding semigroup $T_C(t)$. This clearly requires $E$ to be a
Banach lattice, see \cite{AR91}.
As it is known that $-A$ generates a positive semigroup, we see that the same
remains true for $-A_\beta$ for any $\beta<0$. We are, however,
interested in the parameter range $\beta>0$. It is therefore natural to
ask whether the semigroups remains positive for any parameter value in
this regime.
\begin{prop}
Let $\beta>0$. Then $-A_\beta$ is not resolvent positive and,
consequently, the corresponding semigroup $T_{A_\beta}$ is not
positive.
\end{prop}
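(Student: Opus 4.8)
The plan is to reduce the statement to the non-positivity of the resolvent $(\lambda+A_\beta)^{-1}$ and then, for every $\beta>0$, to exhibit arbitrarily large $\lambda$ for which this operator fails to preserve positivity. Only the easy half of the Arendt--Randy picture is needed: a positive $c_0$-semigroup is resolvent positive, since $(\lambda+A_\beta)^{-1}=\int_0^\infty e^{-\lambda t}T_{A_\beta}(t)\,dt$ and the positive cone of $\operatorname{H}^{-1}_L$ is closed. Since $-A_\beta$ was shown above to generate an analytic semigroup, $(\lambda+A_\beta)^{-1}$ exists for all large real $\lambda$, and it therefore suffices to check that it is not positive along a sequence $\lambda\to\infty$.

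The first step is the rank-one resolvent formula. Write $R_\lambda=(\lambda+A_L)^{-1}$; this operator is positive with a symmetric continuous Green's kernel $G_\lambda(\cdot,\cdot)$, and $R_\lambda\colon\operatorname{H}^{-1}_L\to\operatorname{H}^1_L\hookrightarrow\operatorname{BUC}(-L,L)$ (resp. $\operatorname{C}_0(\mathbb{R})$ when $L=\infty$), so pointwise evaluation of $R_\lambda g$ is legitimate. Solving $(\lambda+A_\beta)u=g$, i.e. $(\lambda+A_L)u=g-\beta\,u(x_0)\,\delta_0$, via the ansatz $u=R_\lambda g-\beta\,u(x_0)\,R_\lambda\delta_0$ and evaluating at $x_0$ gives
\[
  (\lambda+A_\beta)^{-1}g \;=\; R_\lambda g \;-\; \frac{\beta\,(R_\lambda g)(x_0)}{1+\beta\,G_\lambda(x_0,0)}\,G_\lambda(\cdot,0),
\]
which is well defined because $1+\beta\,G_\lambda(x_0,0)\ge 1>0$ for $\beta>0$.

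The second step is to evaluate this formula on the positive test element $g=\delta_{x_0}\in\operatorname{H}^{-1}_L$ (equivalently, on a positive $\operatorname{L}^2$-bump approximating it) at the point $x=0$. With $a=G_\lambda(0,0)$, $b=G_\lambda(x_0,x_0)$ and $c=G_\lambda(0,x_0)=G_\lambda(x_0,0)$ one obtains
\[
  \bigl[(\lambda+A_\beta)^{-1}\delta_{x_0}\bigr](0) \;=\; c-\frac{\beta b}{1+\beta c}\,a \;=\; \frac{c-\beta\,(ab-c^2)}{1+\beta c}.
\]
Positivity and injectivity of $R_\lambda$ on the linearly independent pair $\delta_0,\delta_{x_0}$ force $ab-c^2>0$. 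Using the explicit kernels — $G_\lambda(x,y)=\tfrac{1}{2\sqrt\lambda}e^{-\sqrt\lambda|x-y|}$ for $L=\infty$, and the analogous $\sinh$-expression on $(-L,L)$ — one computes $a,b\sim\tfrac{1}{2\sqrt\lambda}$ and $c\sim\tfrac{1}{2\sqrt\lambda}e^{-\sqrt\lambda\,x_0}$ as $\lambda\to\infty$, hence $\dfrac{c}{ab-c^2}\sim 2\sqrt\lambda\,e^{-\sqrt\lambda\,x_0}\to 0$. Consequently, for any fixed $\beta>0$ and all sufficiently large $\lambda$ the displayed quantity is strictly negative, so $(\lambda+A_\beta)^{-1}$ maps the positive element $\delta_{x_0}$ to a continuous function that is negative at the origin and is therefore not positive. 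Since this persists for arbitrarily large $\lambda$, $-A_\beta$ is not resolvent positive, and a fortiori $T_{A_\beta}$ is not positive.

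The genuinely delicate points are only bookkeeping: justifying the pointwise evaluations and the description of the order cone of $\operatorname{H}^{-1}_L$ (both handled by the embedding into continuous functions), and carrying the Green's-function asymptotics through uniformly for $L<\infty$ and $L=\infty$. There is no conceptual obstacle — the mechanism is simply that the feedback term subtracts, near the source point $0$, a bump proportional to the value sampled at the remote point $x_0$, and for $\lambda$ large the resolvent is so localized that this subtracted bump overwhelms the direct positive contribution of $\delta_{x_0}$ at $0$.
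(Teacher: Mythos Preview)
Your proof is correct and follows essentially the same route as the paper: both derive the rank-one resolvent identity, test it against $\delta_{x_0}$, evaluate the result at $x=0$, and use the explicit Green's kernel to see that this value becomes negative for all sufficiently large $\lambda$. The only cosmetic differences are that the paper works directly with the explicit expression $G_s(x)=\tfrac{1}{2\sqrt s}e^{-\sqrt s|x|}$ on the line rather than your abstract $a,b,c$ notation, and it defers the $L<\infty$ case to a later remark, whereas you sketch the uniform asymptotics for both cases.
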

\begin{proof}
The space $\operatorname{H}^1$ becomes a Banach lattice if one defines
$$
 u\geq 0\text{ iff }u(x)\geq 0\text{ for }x\in \mathbb{R}.
$$
This follows from the continuity of any $u\in \operatorname{H}^1$. One
can then make $\operatorname{H}^{-1}$ into a Banach lattice as well by
defining
$$
 T\geq 0 \text{ iff } \langle T,u\rangle\geq 0\text{ for every }u\geq 0,
$$
for any given $T\in \operatorname{H}^{-1}$. Next notice that the
resolvent equation for $-A_\beta$, given by
$$
 (s+A_\beta)u=(s+A)u+\beta u(x_0)\delta_0=f,
$$
can be solved for $u$ by observing that
$$
 u=(s+A)^{-1}f-\beta u(x_0)(s+A)^{-1}\delta_0\,.
$$
Then, evaluating the last expression at $x_0\,$, solving for $u(x_0)$, and reinserting 
the result back into the formula above, one obtains
$$
 u=(s+A_\beta)^{-1}f=(s+A)^{-1}f-\beta
 \frac{\bigl[(s+A)^{-1}f\bigr](x_0)}{1+\beta\bigl[(s+A)^{-1}\delta_0\bigr](x_0)}
 (s+A)^{-1}\delta_0,
$$
for any $s>0\,$. More precisely, this holds for
$s\,\in\,\rho(-A)\cap\rho(-A_\beta)\,$, where $\rho(-A)$ and
$\rho(-A_\beta)$ denote the resolvent set of $-A$ and $-A_\beta\,$,
respectively. It will be shown later that
$$
\rho(-A)\cap\rho(-A_\beta) = \rho(-A_\beta)\supset (0,\infty).
$$
Also observe that
$(s+A)^{-1}=\text{``}(s-\partial_{xx})^{-1}\text{''}$ is given by convolution with the kernel
\begin{equation}\label{resKernel}
 G_s(x)=\frac{1}{2\sqrt{s}}\;e^{-\sqrt{s}\, |x|}\;,
\end{equation}
whenever the convolution makes sense. Now take $f=\delta_y$ for $y\in
\mathbb{R}$ to be determined later. Then the solution of $(s+A_\beta)u=f$ is
given by
\begin{equation}\label{green}
 u(x)=G_s(x-y)-\beta \frac{G_S(x_0-y)}{1+\beta G_s(x_0)}G_s(x),\: x\in
 \mathbb{R}\;,
\end{equation}
so that
$$
u(0)=\frac{1}{2\sqrt{s}} e^{-\sqrt{s} |y|}-\beta
\frac{\frac{1}{2\sqrt{s}} e^{-\sqrt{s}|x_0-y|}}{1+\frac{\beta}{2\sqrt{s}}e^{-\sqrt{s}|x_0|}}\;.
$$
Setting $y=x_0$ one gets that
$$
 u(0)=\frac{1}{2\sqrt{s}}e^{-\sqrt{s}|x_0|}\Bigl( 1-\beta
 \frac{e^{\sqrt{s}|x_0|}}{1+\frac{\beta}{2\sqrt{s}} e^{-\sqrt{s}|x_0|}}\Bigr).
$$
As long as $\beta>0$, it follows that $u(0)<0$ for $s\geq s_0>0$ and some $s_0>0$ and, since
$u\in \operatorname{H}^1$, also that $u\not\geq 0$, showing that
$$
 (s+A)^{-1}\delta_{x_0}\not\geq 0\text{ for }s\geq s_0,
$$
and the claim follows since $\delta_{x_0}\geq 0$ in $\operatorname{H}^{-1}$. 
\end{proof}
\begin{rem}
We will analyze the operator $A_{L,\beta}$ ($L<\infty$) later, in which case the
above proposition remains valid. In that case, however, a weaker
positivity property holds up to a critical value $\beta_+>0$.
\end{rem}
By providing a careful spectral analysis of the operator $A_\beta$, it
will be shown below that, not only positivity is lost but, in fact,
\eqref{lineTs} possesses oscillatory solutions.
\begin{rem}
While $-A_{L,\beta}$ ($L\in(0,\infty]$)
generates a holomoprhic semigroup, the solutions of the linear Cauchy
problem are not smooth, since any solution $u$ will clearly have
non-differentiable derivatives whenever $u(x_0)\neq 0$, as follows
from the fact that
$$
 u_{t}-u_{xx}=-\beta u(x_0)\delta _0.
$$
Analyticity of the semigroup entails that
$e^{-tA_{L,\beta}}\bigl( \operatorname{H}^{-1}_L\bigr)\subset
\operatorname{dom} \bigl( A_{L,\beta}^n\bigr)$ for $t>0$ and $n\in
\mathbb{N}$ (see \cite{Pa83,Gol85}). This shows that the singularity of a
solution $e^{-tA_{\beta,L}}u_0$ does not deteriorate as more derivatives
are taken in the sense that
\begin{gather*}
 u\in \operatorname{H}^1_L,\: A_Lu+\beta u(x_0)\delta_0\in
 \operatorname{H}^1_L,\\ A_L \bigl[ A_Lu+\beta u(x_0)\delta_0\bigr]+\beta 
\bigl[ A_Lu+\beta u(x_0)\delta_0\bigr](x_0)\delta_0\in
\operatorname{H}^1_L,\:\dots  
\end{gather*}
and that $u\in \operatorname{H}^m(\mathbb{R}\setminus\{0\})$ for any
$m\in\mathbb{N}$. Thus $u(t,\cdot)\in
\operatorname{C}^\infty(\mathbb{R}\setminus\{0\})$ for any $t>0$ and
for any $u_0\in \operatorname{H}^{-1}$, and, consequently also $u\in
\operatorname{C}^\infty\bigl((0,\infty)\times (\mathbb{R}\setminus\{
0\})\bigr)$.
\end{rem}
For the case $L=\infty$ we obtain the following result on the 
sprectrum of the perturbed operator. Again the case $L<\infty$ will be
considered later. 
However, for finite $L$ the results of our analysis will not be equally 
explicit as for $L=\infty\,$. We shall use the notation $\sigma_p$ and
$\sigma_c$ for the point and continuous spectrum, respectively.
\begin{prop}\label{spec}
There is a critical value $\beta_0=\pi$ such that
$$
\sigma (-A_\beta)=\sigma_{c}(-A_\beta)=(-\infty,0],\:
\sigma_p(-A_\beta)=\emptyset
$$
for $\beta\in[0,\beta_0)$. There is a further critical value
$\beta_1>\beta_0$, whose value can be  determined explicitely as
$\beta_1=\frac{3\pi}{\sqrt{2}}e^{3\pi/4}$ such that for any
$\beta\in(\beta_0,\beta_1)$ the continous spectrum remains unchanged,
i.e.,
$$
\sigma_{c}(-A_\beta)=(-\infty,0],
$$
whereas the point spectrum is genuinely complex 
$$
 \sigma_p(-A_\beta)=\big\{
 \lambda_{1,\beta},\dots,\lambda_{n_\beta,\beta}\} \,\subset\
 \mathbb{C}\backslash\mathbb{R}
$$
and varies with $\beta\in(\beta_0,\beta_1)$. The point spectrum is
never empty for $\beta\in(\beta_0,\beta_1)$ and consists of finitely
many, genuinely complex, isolated eigenvalues that form conjugate
pairs in the interior of the left complex half plane. \\
For $\beta=\beta_1$, a first pair of complex conjugate eigenvalues
reaches the imaginary axis. The pair crosses into
the right complex half plane for $\beta>\beta_1$, yet never reaches
the positive real axis as $\beta\to\infty$.\\
As $\beta$ increases beyond $\beta_1$, additional pairs of complex
conjugate eigenvalues are ejected from the real continuous spectrum
into the left complex half-plane and migrate towards the imaginary
axis, eventually crossing it, pair after pair.
For any finite $\beta>0$, there is only a finite number of conjugate
eigenvalue pairs. None of the pairs ever reunites on the positive real
axis as $\beta\to\infty$.
\end{prop}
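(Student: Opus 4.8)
The plan is to reduce the spectral problem for $-A_\beta$ to locating the zeros of a single scalar transcendental function and then to analyse that function via the Lambert--W correspondence. Concretely, I would first exploit the explicit resolvent formula obtained above: for $s\in\rho(-A)=\mathbb{C}\setminus(-\infty,0]$, taking the principal branch of the square root (so $\Re\sqrt s>0$), the operator $s+A_\beta$ is boundedly invertible on $\operatorname{H}^{-1}$ exactly when the \emph{characteristic function}
\[
 D(s):=1+\beta\bigl[(s+A)^{-1}\delta_0\bigr](x_0)=1+\frac{\beta}{2\sqrt s}\,e^{-\sqrt s\,x_0}
\]
does not vanish, and when $D(s)=0$ the function $G_s=(s+A)^{-1}\delta_0\in\operatorname{H}^1$ satisfies $(s+A_\beta)G_s=0$, so $s\in\sigma_p(-A_\beta)$. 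Conversely, any eigenfunction $u$ for an eigenvalue $s\notin(-\infty,0]$ has $u(x_0)\neq0$ (otherwise $u\in\ker(s+A)=\{0\}$), hence $u\in\operatorname{span}\{G_s\}$ and $D(s)=0$, so each such eigenvalue is geometrically simple. I would then note that $-A_\beta$ has no eigenvalue on $(-\infty,0]$ (for $s\le0$ the operator $-\partial_{xx}+s$ has trivial $\operatorname{H}^1$-kernel, while $(-\partial_{xx}+s)u=\delta_0$ has no $\operatorname{H}^1$-solution), that $B=\beta\delta_0\delta_{x_0}^\top$ is rank one and hence $A$-compact, so Weyl's theorem gives $\sigma_{\mathrm{ess}}(-A_\beta)=\sigma_{\mathrm{ess}}(-A)=(-\infty,0]$, and that the transpose $A+\beta\delta_{x_0}\delta_0^\top$ has the very same $D$ (because $G_s$ is even) and hence no point spectrum on $(-\infty,0]$, so that $-A_\beta$ has empty residual spectrum there; this yields $\sigma_c(-A_\beta)=(-\infty,0]$ for every $\beta\ge0$. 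Finally, $|D(s)-1|\le\beta/(2|\sqrt s|)\to0$ as $|s|\to\infty$ (since $\Re\sqrt s\ge0$ forces $|e^{-\sqrt s\,x_0}|\le1$), so $D$, holomorphic on $\mathbb{C}\setminus(-\infty,0]$, has only finitely many zeros for each fixed $\beta$; the whole statement is thereby reduced to locating these zeros.

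Next I would perform the substitution $z=x_0\sqrt s$ (so $\Re z>0$ and $s=z^2/x_0^2$), under which $D(s)=0$ becomes $z\,e^{z}=-\tfrac{\beta x_0}{2}$. Thus the eigenvalues of $-A_\beta$ are precisely the numbers $z^2/x_0^2$ with $z$ a Lambert--W value $W_k\!\bigl(-\tfrac{\beta x_0}{2}\bigr)$ of positive real part. Since the right-hand side is negative real, such $z$ occur in conjugate pairs (giving conjugate eigenvalue pairs), and none of them is real (a real solution of $z\,e^{z}<0$ would have $z<0$, hence $\Re z<0$), so no eigenvalue is ever real for $\beta>0$; in particular $(0,\infty)\subset\rho(-A_\beta)$, which together with $\rho(-A_\beta)\subset\rho(-A)$ gives the promised identity $\rho(-A)\cap\rho(-A_\beta)=\rho(-A_\beta)\supset(0,\infty)$. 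I would then analyse $\Phi(z)=z\,e^{z}$ on the open right half-plane and count preimages of the negative number $-c$, $c=\beta x_0/2$. Two observations drive the rest: (i) the boundary curve $b\mapsto\Phi(ib)=-b\sin b+ib\cos b$ meets the negative real axis, with the orientation that lets $-c$ enter the image $\Phi(\{\Re z>0\})$, precisely at $-c\in\{-\tfrac\pi2,-\tfrac{5\pi}2,-\tfrac{9\pi}2,\dots\}$, i.e. at $\beta x_0\in\{\pi,5\pi,9\pi,\dots\}$; and (ii) along the first branch $|\Phi(z)|=\sqrt{a^2+b^2}\,e^{a}$ increases from $\tfrac\pi2$ (at $a=0$, $b=\tfrac\pi2$) to $+\infty$, so that branch sweeps out exactly $(-\infty,-\tfrac\pi2]$.

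From these facts the claimed picture follows. For $\beta x_0<\pi$, $-c$ is not attained, $D$ has no zero, and $\sigma(-A_\beta)=\sigma_c(-A_\beta)=(-\infty,0]$ with $\sigma_p=\emptyset$; this identifies $\beta_0$ via $\beta_0 x_0=\pi$ (the stated $\beta_0=\pi$ when $x_0=1$), and at $\beta=\beta_0$ the pair $z=\pm i\tfrac\pi2$, i.e. $s=-\pi^2/(4x_0^2)$, is ejected from the continuous spectrum. For $\beta$ just above $\beta_0$ the implicit function theorem applies --- $\Phi'(z)=(1+z)e^{z}\neq0$ there, so the zero of $D$ is simple --- and gives a smooth branch $z(\beta)=a(\beta)+ib(\beta)$ with $a>0$ increasing and $b\in(\tfrac\pi2,\pi)$; then $s(\beta)=z(\beta)^2/x_0^2$ has $\Re s$ of the sign of $a-b$, and a short monotonicity argument shows $a-b$ increases strictly along the branch from $-\tfrac\pi2$ to $+\infty$, vanishing exactly when $a=b$, which with the argument relation $\arg z+b=\pi$ forces $a=b=\tfrac{3\pi}4$ and, via $|z|e^{a}=c$, determines $\beta_1$ through $\beta_1 x_0=\tfrac{3\pi}{\sqrt2}e^{3\pi/4}$ (the stated $\beta_1$ when $x_0=1$). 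Hence $\Re s<0$ on $(\beta_0,\beta_1)$, $\Re s=0$ only at $\beta_1$ and transversally, $\Re s>0$ for $\beta>\beta_1$; and as $\beta\to\infty$, $a(\beta)\to\infty$ (indeed $a\sim\ln\beta$) while $b(\beta)\to\pi$, so $s(\beta)\to\infty$ with $\Im s/\Re s\to0$ but $\Im s>0$ throughout, i.e. the pair escapes to $+\infty$ asymptotic to, yet never touching, the positive real axis. The same analysis applied to the higher Lambert branches shows additional conjugate pairs are ejected from $(-\infty,0]$ at $\beta x_0=5\pi,9\pi,\dots$ and each then migrates across the imaginary axis (at much larger parameter values, so that for $\beta\in(\beta_0,\beta_1)$ all present pairs still lie in the open left half-plane); since $D$ has only finitely many zeros for each fixed $\beta$ there are finitely many pairs at any finite $\beta$, and, no eigenvalue being real for $\beta>0$, no two pairs ever reunite on the positive real axis.

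The step I expect to be the main obstacle is the global control of the Lambert--W branches on $\{\Re z>0\}$ behind observations (i) and (ii): making rigorous that the preimage count of $-c$ under $z\mapsto z\,e^{z}$ jumps precisely at $c\in\{\tfrac\pi2,\tfrac{5\pi}2,\dots\}$ --- equivalently, pinning down the intersections of $\Phi(i\mathbb{R})$ with the negative real axis and their orientation --- and establishing the monotonicity of the branch data ($a$ increasing in $\beta$, $a-b$ increasing along each branch) that forces each pair to cross the imaginary axis exactly once and transversally. The remaining ingredients --- the resolvent reduction, the $A$-compactness computation of $\sigma_c$, the absence of spectrum on $(0,\infty)$, and the finiteness of $\sigma_p$ --- should be routine once $D$ and the substitution $z=x_0\sqrt s$ are available.
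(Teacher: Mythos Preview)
Your approach is correct and takes a genuinely different route from the paper. Both reduce to the scalar characteristic equation $2\sqrt{s}+\beta e^{-x_0\sqrt{s}}=0$, but the paper does not make the Lambert--W substitution $z=x_0\sqrt{s}$; instead it writes $\sqrt{s}=\alpha+i\gamma$ directly, derives the system $2\alpha+\beta e^{-\alpha}\cos\gamma=0$, $2\gamma-\beta e^{-\alpha}\sin\gamma=0$, and searches along rays $\gamma=m\alpha$. This yields $\gamma_0(m)=\pi-\arctan m$ and a threshold function $\Phi(m)=2\gamma_0(m)e^{\gamma_0(m)/m}/\sin\gamma_0(m)$, whose monotone decrease in $m$ encodes the migration of each eigenvalue pair from the negative real axis ($m=\infty$, $\beta=\pi$) across the imaginary axis ($m=1$, $\beta=\beta_1$). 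For the persistence of the continuous spectrum the paper cites Kato's stability theorem and, in addition, constructs explicit generalized eigenfunctions for every $\lambda\le 0$ (modified fundamental solutions tuned so that $A_\beta$ annihilates them), whereas you invoke $A$-compactness of the rank-one perturbation, Weyl's theorem, and an adjoint argument to exclude residual spectrum. For finiteness of $\sigma_p$ the paper uses Rouch\'e on compacts rather than your $|D(s)-1|\to 0$ argument. Your Lambert--W framing is cleaner and plugs into a catalogued branch structure, making the ejection thresholds $\beta x_0\in\{\pi,5\pi,9\pi,\dots\}$ and the first crossing value immediate; the paper's ray-by-ray analysis is more hands-on but fully self-contained. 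Both leave the same technical core --- global monotonicity along each branch --- as the point requiring the most care, and you rightly flag it as the main obstacle.
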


\begin{proof}
As previously mentioned, it holds that
$$
 (s+A_\beta)^{-1}f=(s+A)^{-1}f-\beta
 \frac{\bigl[(s+A)^{-1}f\bigr](x_0)}{1+\beta\bigl[(s+A)^{-1}\delta_0\bigr](x_0)}
 (s+A)^{-1}\delta_0.
$$
This shows that, if $s\in
\rho(-A)=\rho(-A_{\beta=0})=\mathbb{C}\setminus(-\infty,0]$, then $s\in
\rho(-A_\beta)$ unless it so happens that
$1+\beta\bigl[(s+A)^{-1}\delta_0\bigr](x_0)=0$. The latter equation is
equivalent to
\begin{equation}\label{poleEq}
 2\sqrt{s}+\beta e^{-x_0\sqrt{s}}=0
\end{equation}
thanks to \eqref{resKernel}. Zeros of this equation in 
$\mathbb{C}\setminus(-\infty,0]$ are
simple poles of the resolvent and, as such, are eigenvalues of
$-A_\beta$. This follows from a classical result found e.g. in
\cite[Theorem 3 on page 229]{Y74}.\\
Before tracing the path of the complex conjugate pairs in more detail,
we provide a qualitative description of the consequences of varying 
$\beta$. The function \eqref{poleEq} is holomorphic in the open domain
$G:=\mathbb{C}\setminus(-\infty,0]$ and can be written as
$f+g$ for  $f=\beta e^{-x_0\sqrt{\cdot}}$ and $g=2\sqrt{\cdot}$. Since
$f$ never vanishes for $\beta\neq0$ and since $g$ is bounded on any
compact subset $K$ of $G$, it is clear that $\lvert g \rvert$ can be
dominated  by $\lvert f \rvert$ on any such $K$ by making
$\lvert \beta \rvert$ sufficiently large. It follows from Rouch\'e's
Theorem that, for any compact $K\subset G$ with smooth boundary, there
exists a $\beta(K)>0$ such that \eqref{poleEq} has no zeros in $K$ for
any $\beta\geq\beta(K)$. An analogous statement for $\beta<0$ clearly
also holds. Thus, increasing or decreasing $\beta$, all eigenvalues
of $A_{\beta}$ exit from any given compact subset of $G$. \\
The solutions of \eqref{poleEq} with $s=-\lvert s \rvert \in
(-\infty,0]$ and, consequently, with $\operatorname{Re}(\sqrt{s})=0$
can immediately be obtained from the validity of
$$
\operatorname{Re}(e^{-x_0\sqrt{s}} )=0\text{ and }2\operatorname{Im}(
\sqrt{s})+\beta \operatorname{Im}( e^{-x_0\sqrt{s}} )=0. 
$$
A separate discussion for positive and negative values of $\beta$
produces all negative real solutions of \eqref{poleEq} for
$\beta\neq0\,$. They are given by
$$
s^+_k=-\frac{(4k+1)^2\pi^2}{4x_0^2}\text{ for }
\beta^+_k=\frac{(4k+1)\pi}{x_0}\text { and }k=0,1,2, \dots
$$
and 
$$
s^-_k=-\frac{(4k+3)^2\pi^2}{4x_0^2}\text{ for
}\beta^-_k=-\frac{(4k+3)\pi}{x_0}\text{ and } k=0,1,2, \dots
$$
In Section \ref{SectionVIE}, the discussion of the Popov
criterion will again reveal this pattern, however along the imaginary axis, where zeros are 
found in an alternating order and they induce a sequence of increasing positive and decreasing negative
values of the parameter $\beta$. 
This reflects the fact, that for increasingly positive values of 
$\beta$ or decreasingly negative values of $\beta$, complex conjugate
solution pairs of \eqref{poleEq} migrate away from the negative real axis,
where they originate at specific locations, towards the imaginary axis.\\ 
We now return to a more precise account of the trajectory traced by
the (genuinely) complex conjugate solutions of equation \eqref{poleEq}
in terms of the parameter $\beta$. To that end, we write
$\sqrt{s}=\alpha+i \gamma$, where $\alpha>0$ and $\gamma\geq 0$. We
can restrict our search in this way since we know that $\alpha -i
\gamma$ is also a solution and since we are interested in solutions
such that $s\not\in(-\infty,0]$, in which case
$\operatorname{Re}(\sqrt{s})\geq 0$. Equation \eqref{poleEq} can then
be rewritten as the system
$$
\begin{cases}
  2 \alpha +\beta e^{-\alpha}\cos(\gamma)=0,&\\
  2 \gamma -\beta e^{-\alpha}\sin(\gamma)=0.&
\end{cases}
$$
We now fix $x_0=1$ in the rest of the calculation. If $0\neq x_0\neq 1$,
the same qualitative behavior is observed for any $\beta\neq 0$ simply with
different numerical values. It follows from the above system that
$$
 \frac{\gamma}{\alpha}=-\tan(\gamma),
$$
and we look for solutions on lines of the form $\gamma/\alpha =m$,
i.e. on lines $\alpha+i m \alpha$ with parameter $\alpha$ where
$m\in[0,\infty]$. In the extreme case when $m=0$, the equation reads
$2\alpha +\beta e^{-\alpha}=0$ and has no solutions for any
$\beta>0$. Next let's fix $\frac{\gamma}{\alpha}=m$, in which case
$$
 \gamma=-\arctan(m)+k\pi
$$
for any integer $k$ such that $\gamma\geq 0$. As $m\in(0,\infty]$, one has
that $-\arctan(m)\in[-\frac{\pi}{2},0)$ and thus
$$
 \gamma=\underset{\gamma_0}{\underbrace{-\arctan(m)+\pi}},
 -\arctan(m)+2\pi,\dots =\gamma_0,\gamma_0+\pi, \gamma_0+2\pi,\dots=
 \gamma_0,\gamma_1,\gamma_2,\dots
$$
where $\gamma_0\in[\frac{\pi}{2},\pi)$. With $\gamma$ in hand and
$\alpha=\gamma/m$ we arrive at
$$
 2 \gamma -\beta e^{-\gamma/m}\sin(\gamma )=0,
$$
from which we see that we need only to consider even $k\geq 0$ due to
$\sin(\gamma_0+\pi)<0$ and the periodicity of $\sin$. This shows that
no solution exists unless $\beta\geq \beta _0>0$, where
$\beta_0=2 \gamma_0e^{\gamma_0/m}/\sin(\gamma_0)$. Notice that, if
$\gamma_0$ is not a solution, then so aren't $\gamma_{2k}$ for $k\geq
1$ since
$$
2 \gamma_{2k}>2 \gamma_0>\beta e^{-\gamma_{0}/m}\sin(\gamma_0)>
\beta e^{-\gamma_{2k}/m}\sin(\gamma_{2k}),
$$
and since $\sin(\gamma_{2k})=\sin(\gamma_0)$. 

The limiting case $m=\infty$ corresponds to looking for real negative
solutions of $\eqref{poleEq}$ and was discussed above separately. In
that case the equation for $\gamma_0$ reduces to $2
\gamma_0=\beta\sin(\gamma_0)$ and it has no solution unless $\beta\geq
2 \gamma_0$, i.e. unless $\beta\geq\pi$ since $\gamma_0=\pi/2$ for
$m=\infty$. We can also conclude that no solution exists for any
$m<\infty$ unless $\beta>\pi$. Next take the case when $m=1$, which
corresponds to looking for purely imaginary eigenvalues. The equation
then reads
$$
 2 \gamma_0=\beta e^{-\gamma_0}\sin(\gamma _0)
$$
for $\gamma_0=3\pi/4$, which requires
\begin{equation}\label{beta1first}
 \beta\geq \beta _1=\frac{3\pi}{\sqrt{2}}e^{3\pi/4}\simeq 70.3134
\end{equation}
for a solution to exist. For $\beta=\beta_1$ only one solution is
found on the line $\gamma=\alpha$. Let us finally consider the case
$m\geq m_*>1$ for some fixed $m_*$. The equation is
$$
 2 \gamma_0=\beta e^{-\gamma_0/m}\sin(\gamma_0)=0,
$$
where $\gamma_0=\pi-\arctan(m)\leq
\pi-\arctan(m_*)=\gamma_*$ satisfies
$\gamma_0\in(\frac{\pi}{2},\gamma_*]$ and
$\gamma_*<\frac{3\pi}{4}$. Under these circumstances, there is no
solution until $\beta$ becomes larger or equal than $2
\gamma_0e^{\gamma_0/m}/\sin(\gamma_0)$ for $m$ fixed. Clearly
$\gamma_0$ can be thought of as a function $\gamma_0(m)$ of $m$, which
is decreasing. It follows that the function
$$
 \Phi(m)=2\frac{\gamma_0(m)e^{\gamma_0(m)/m}}{\sin \bigl(
   \gamma_0(m)\bigr)} 
$$
is also decreasing in $m$. Thus, when considering the equation $\beta
=\Phi(m)\leq \Phi(m_*)$, we see that, for any given
$\beta\leq\Phi(m_*)$, there exists a unique
$m=m(\beta)=\Phi^{-1}(\beta)$. It can be verified that
$\Phi'(1)\approx -254$ and that
$$
 \lim_{m\to\infty}\Phi(m)=\pi.
$$
We conclude that $\sigma(-A_\beta)=(-\infty,0]$ for $\beta<\pi\,$. 
We observe that all negative real solutions are also recovered in this
more detailed discussion of the case of interest ($\beta>0$). Indeed,
for $\beta=\pi$ and $m=\infty$, one has the appearance of the
solution $s^+_1=(i\frac{\pi}{2})^2=-\frac{\pi^2}{4}$ of
\eqref{poleEq} on the negative real axis (note that
$\gamma_0=\frac{\pi}{2}$).
The next solution to appear from $m=\infty$ satisfies
$2(\gamma_0+2\pi)=\beta \sin(\gamma_0)$ yielding $\beta=5\pi$ and the
solution $s^+_2=-\frac{25\pi^2}{4}$ of \eqref{poleEq}.
It follows that more and more solutions of \eqref{poleEq} appear on
the negative real axis (with increasing absolute value) as $\beta$
increases, and, due to the monotonicity properties of the function
$\Phi$, they all migrate towards the imaginary axis along complex
conjugate curves which cross and move beyond it.\\
It remains to verify that the continuous spectrum
persists. This follows from general spectral results which are found
in Kato's book \cite[Theorem 5.35 in Chapter IV]{Ka66}. For the
specific operator of interest here, it is also possible to give a 
direct proof, which also produces generalized eigenfunctions.\\

Consider $\lambda=0$ first and notice
that $G=A-|x|/2$ is, for any $A\in \mathbb{R}$, a fundamental solution
for $-\partial_{xx}$ and therefore it holds that
$$
 -\partial _{xx}G+\beta G(x_0)\delta_0=\bigl( 1+\beta A-|x_0|/2\bigr)
 \delta_0.
$$
Setting $A=|x_0|/2-1/\beta$ one obtains $G\in N(-\partial_{xx}+\beta
\delta_0 \delta_{x_0}^\top)$. While $G\notin \operatorname{H}^{1}$,
it can be approximated by such functions, showing that $\lambda=0$ is
indeed still in the spectrum of $A_\beta$ when
$\beta>0$. For $\lambda =-\alpha ^2$ and $\alpha>0$, one similarly
observes that $\widetilde{G}_\alpha =G_\alpha +Ae^{i \alpha x}+Be^{-i \alpha x}$ is a
fundamental solution of $-\partial_{xx}-\alpha^2$ provided
$$
 G_\alpha (x)=
 \begin{cases}
   \frac{1}{2i \alpha}e^{i \alpha x},&x<0,\\
   \frac{1}{2i \alpha}e^{-i \alpha x},&x\geq 0,
 \end{cases}
$$
since $Ae^{i \alpha x}+Be^{-i \alpha x}\in
N(-\partial_{xx}-\alpha^2)$. One computes that
$$
A_\beta \widetilde{G}_\alpha = \alpha^2 \widetilde{G}_\alpha +\bigl[
1+\beta \widetilde{G}_\alpha (x_0)\bigr] \delta_0.
$$
Since it is always possible to choose $A$ and $B$ so that $\widetilde{G
}_\alpha =-1/\beta$, the claim follows as for $\lambda=0$.
\end{proof}
%%%%%%%%%%%%%%%%%%%%%%%%%%%%
%%%%%%%%%%%%%%%%%%%%%%%%%%%%
The asymptotic behavior of the semigroup generated by $-A_\beta$ and the long time 
behavior of solutions to the Cauchy
problem \eqref{lineTs} is the focus of the remainder of this
section. As in the rest of the section we consider the linear
case and, again, postpone the discussion of the case when $L<\infty$
to a later section. Equation \eqref{green} gives an explicit formula for the Green's
function $G_{s,\beta}$ of the operator $s+A_\beta$, so that the Laplace transform
$\mathcal{L}(u)=\hat u$ of a solution $u$ of the linear version of \eqref{lineTs} is given by
\begin{equation}\label{uHat}
 \hat u(s,x)=\frac{1}{2\sqrt{s}} \int e^{-\sqrt{s}|x-y|}u_0(y)\,
 dy-\beta 
 \frac{\int e^{-\sqrt{s}|x_0-y|}u_0(y)\, dy}{2\sqrt{s}+\beta e^{-\sqrt{s}|x_0|}} \;
 \frac{1}{2\sqrt{s}}e^{-\sqrt{s}|x|}\;.
\end{equation}
It holds in particular that
$$
 \hat
 u(s,x_0)=\frac{1}{1+\frac{\beta}{2\sqrt{s}}e^{-\sqrt{s}|x_0|}}
 \frac{1}{2\sqrt{s}}\int e^{-\sqrt{s}|x_0-y|}u_0(y)\, dy.
$$
Also notice the classical fact that 
$\mathcal{L}\bigl( \frac{1}{\sqrt{4\pi t}}e^{-\frac{|x|^2}{4t}}\bigr)(s)=
\frac{1}{2\sqrt{s}} e^{-\sqrt{s}|x|}$ for $s\in \mathbb{C}\setminus
(-\infty,0]$. It is a well-known fact of semigroup theory \cite{ABHN01} that
$$
 \Bigl(\int _0^\infty e^{-st}T_{A_\beta}(t)u_0\,
 dt\Bigr)(x)=(s+A_\beta)^{-1}u_0=
 \int G_{s,\beta}(\cdot,y)u_0(y)\, dy,
$$
so that the kernel $k_{A_\beta}(t)$ of $T_{A_\beta}(t)$ is given by
$$
 k_{A_\beta}(t)(x,y)=\mathcal{L}^{-1}\bigl( G_{\cdot,\beta}(x,y)\bigr).
$$
\begin{prop}
For any $u_0\in \operatorname{L}^1(\mathbb{R})$, so in particular for
any $u_0\in \operatorname{H}^1$, it holds, for any $\beta<\beta_1$, that
$$
 u(t,x_0)=O(\frac{1}{t})\text{ as }t\to\infty,
$$
and that
$$
 u(t,x)=O(\frac{1}{\sqrt{t}}) \text{ as }t\to\infty,
$$
for the corresponding solution of the linear Cauchy problem and for
$x\neq x_0$.
\end{prop}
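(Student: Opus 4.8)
The plan is to pass to Laplace transforms, exploit the explicit formula \eqref{uHat}, and recover $u$ from the inversion formula for the analytic semigroup $e^{-tA_\beta}$, deforming the sectorial inversion contour onto the branch cut $(-\infty,0]$ of $\sqrt{\,\cdot\,}$. By Proposition \ref{spec}, for $\beta<\beta_1$ the map $s\mapsto \hat u(s,x)$ is holomorphic on $\mathbb{C}\setminus(-\infty,0]$ except at the finitely many eigenvalues $\lambda_j$ of $-A_\beta$, all satisfying $\operatorname{Re}\lambda_j<0$ (and there are none at all when $\beta<\beta_0$); since $\hat u(s,x)\to0$ as $|s|\to\infty$ (immediate from \eqref{uHat}), collapsing the contour onto the cut produces only residues $c_j e^{\lambda_j t}=O(e^{-\delta t})$ and, using $\hat u(-r-i0,x)=\overline{\hat u(-r+i0,x)}$ for real $u_0$, leaves
$$
u(t,x)=-\frac1\pi\int_0^\infty e^{-rt}\,\operatorname{Im}\hat u(-r+i0,x)\,dr+O(e^{-\delta t}).
$$
At the finitely many exceptional parameters $\beta=(4k+1)\pi/x_0<\beta_1$, at which a point of $\sigma_c(-A_\beta)$ on $(-\infty,0]$ is also a zero of $2\sqrt s+\beta e^{-x_0\sqrt s}$, one indents the contour there, picking up one more exponentially decaying term.

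The crux is the behaviour of $\hat u(\cdot,x)$ at the branch point $s=0$. Set $g_x(s)=\int e^{-\sqrt s\,|x-y|}u_0(y)\,dy$, which is bounded by $\|u_0\|_{\operatorname{L}^1}$ on $\operatorname{Re}s\ge0$, continuous up to the cut, with $g_x(0)=\int u_0$. By \eqref{uHat}, $\hat u(s,x_0)=g_{x_0}(s)/(2\sqrt s+\beta e^{-x_0\sqrt s})$, which — in contrast with the heat case $\beta=0$, where it behaves like $(\int u_0)/2\sqrt s$ and yields the rate $t^{-1/2}$ — tends to the \emph{finite} limit $\tfrac1\beta\int u_0$ precisely because $\beta\neq0$; and for $x\neq x_0$,
$$
\hat u(s,x)=\frac{1}{2\sqrt s}\Bigl[g_x(s)-\frac{\beta\,e^{-\sqrt s\,|x|}}{2\sqrt s+\beta e^{-x_0\sqrt s}}\,g_{x_0}(s)\Bigr],
$$
where the bracket not only vanishes at $s=0$ (its limit is $\int u_0-\int u_0=0$) but is even $O(\sqrt s)$ there: the only contribution not manifestly of that size, $g_x(s)-g_{x_0}(s)=\int\bigl(e^{-\sqrt s|x-y|}-e^{-\sqrt s|x_0-y|}\bigr)u_0(y)\,dy$, is bounded by $|\sqrt s|\,|x-x_0|\,\|u_0\|_{\operatorname{L}^1}$ by the reverse triangle inequality, with \emph{no} moment assumption on $u_0$. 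Hence in both cases $\hat u(\cdot,x)$ extends continuously to $s=0$ with a real value, so that $h_x(r):=\operatorname{Im}\hat u(-r+i0,x)$ is a bounded function of $r\in(0,\infty)$ vanishing as $r\to0^+$ and as $r\to\infty$. Splitting $\int_0^\infty e^{-rt}h_x(r)\,dr$ at $r=1$, rescaling $r=\rho/t$ on $(0,1)$, and applying dominated convergence gives $\int_0^\infty e^{-rt}h_x(r)\,dr=o(1/t)$; thus $u(t,x_0)=o(1/t)$ and $u(t,x)=o(1/t)$ for $x\neq x_0$, which contains the assertion.

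The main obstacle is exactly this last sharpness issue: for $u_0$ merely integrable the convergence $h_x(r)\to0$ as $r\to0^+$ is only qualitative — an algebraic rate would require a moment or spatial-decay hypothesis on $u_0$ — so the dominated-convergence step cannot be improved; the stated $O(1/t)$ and $O(1/\sqrt t)$ are nonetheless contained in it. A more hands-on route to the rate at $x_0$: reduce \eqref{lineTs} by Duhamel to the scalar renewal equation $v=\phi-\beta\,\kappa*v$, where $v(t)=u(t,x_0)$, $\kappa(\sigma)=e^{-x_0^2/4\sigma}/\sqrt{4\pi\sigma}$ and $\phi(t)=\bigl[e^{-tA}u_0\bigr](x_0)$; its resolvent kernel $\delta+R_0$ (so $v=(\delta+R_0)*\phi$) has \emph{total mass zero}, since $\int_0^\infty(\delta+R_0)=\bigl(1+\beta\hat\kappa(0^+)\bigr)^{-1}=0$ because $\hat\kappa(s)=e^{-x_0\sqrt s}/2\sqrt s\to\infty$, while Proposition \ref{spec} — all zeros of $2\sqrt s+\beta e^{-x_0\sqrt s}$ lying in the open left half-plane for $\beta<\beta_1$ — forces $|R_0(\sigma)|\le C\min(1,\sigma^{-3/2})$. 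Writing $v(t)=\int_0^t R_0(t-\tau)\bigl(\phi(\tau)-\phi(t)\bigr)\,d\tau-\phi(t)\int_t^\infty R_0$ and using the elementary bounds $|\phi(t)|+t\,|\phi'(t)|\le C\,t^{-1/2}$ then yields $v(t)=O(1/t)$ after splitting both integrals at $t/2$; the case $x\neq x_0$ is then best left to the contour argument above, since a direct estimate of $\int_0^t\kappa_x(t-\tau)v(\tau)\,d\tau$ (with $\kappa_x(\sigma)=e^{-x^2/4\sigma}/\sqrt{4\pi\sigma}$) leaves a spurious $t^{-1/2}\log t$ that is cancelled only by the vanishing of the bracket displayed above.
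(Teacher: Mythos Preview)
Your argument is correct and in fact yields more than the statement asks for. It is, however, a genuinely different route from the paper's.

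The paper does \emph{not} collapse the Bromwich contour onto the branch cut. Instead it uses the standard analytic--semigroup deformation: since for $\beta<\beta_1$ the function $F(s)=\hat u(s,x_0)$ is holomorphic in a sector $\{|\arg s|\le \tfrac{\pi}{2}+\gamma\}\setminus\{0\}$ for some $\gamma>0$ (Proposition~\ref{spec}), one swings the vertical line onto the two rays $(0,\infty)e^{\pm i(\pi/2+\gamma)}$, checks that the small arc at the origin gives no contribution, and uses only the crude bound $|F(z)|\le C\|u_0\|_{\operatorname{L}^1}$ along the rays to get $|u(t,x_0)|\le C\int_0^\infty e^{-r\gamma t}\,dr=C/(\gamma t)$. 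For $x\neq x_0$ the paper simply observes that $\hat u(s,x)$ carries an extra factor $1/(2\sqrt s)$ and that the same contour argument then produces $\int_0^\infty e^{-r\gamma t}r^{-1/2}\,dr=O(t^{-1/2})$; no cancellation is exploited.

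Your approach buys two things. First, by collapsing all the way to the cut and analysing the branch point, you see that $\hat u(s,x_0)\to\tfrac1\beta\int u_0$ and, more strikingly, that the bracket in $\hat u(s,x)$ for $x\neq x_0$ is $O(\sqrt s)$ with \emph{no} moment assumption, so the $1/\sqrt s$ singularity is killed there too; dominated convergence then gives $o(1/t)$ uniformly in~$x$, strictly sharper than the paper's $O(t^{-1/2})$ for $x\neq x_0$. Second, your renewal--equation alternative, resting on the observation that the resolvent kernel has total mass zero (because $\hat\kappa(0^+)=\infty$) and decays like $\sigma^{-3/2}$, is an independent and entirely time--domain proof at $x_0$; nothing of that sort appears in the paper. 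The price you pay is having to handle the exceptional values $\beta=(4k+1)\pi/x_0$ at which a zero of $2\sqrt s+\beta e^{-x_0\sqrt s}$ sits on the cut --- your indentation remark is correct (it is cleanest in the variable $z=\sqrt s$, where the integrand becomes meromorphic and the cut becomes the imaginary axis), but the paper's sectorial contour sidesteps this nuisance automatically since its rays stay off $(-\infty,0]$.
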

\begin{proof}
Define
$$
 F(s):=\hat u(s,x_0)=\frac{1}{2\sqrt{s}+\beta e^{-\sqrt{s}|x_0|}}\int
 e^{-\sqrt{s}|x_0-y|}u_0(y)\, dy
$$
and observe that the abscissa of convergence $abs \bigl[ u(\cdot,x_0)\bigr]$ of
$u(\cdot,x_0)$ is 0, i.e. the integral defining the Laplace transform
converges for $\operatorname{Re}(s)>0$,  by the explicit representation of $\hat u$.
Then the well-known inversion theorem for the Laplace transform yields that
$$
 u(t,x_0)=\frac{1}{2\pi i}\int _{\delta-i \infty}^{\delta+i\infty} e^{zt}F(z)\, dz,\: t>0,
$$
where $\delta>0$. Since $\beta<\beta_1$, $F$ is holomorphic in a sector
$$
 [ \, \left| \theta \right|\leq \theta_\beta \,]\,\setminus \{0\}\text{ for }\theta_\beta
 >\frac{\pi}{2}+\gamma \text{ and some }\gamma>0,
$$
as follows from Proposition \ref{spec}. The path of integration can therefore be deformed into
$$
 \Gamma_\varepsilon
 =(-\infty,-\varepsilon)e^{-i(\frac{\pi}{2}+\gamma)}\cup
 \underset{C_\varepsilon}{\underbrace{\Big\{ \varepsilon e^{i\theta}\, \Big |\, \theta\in
 \bigl[-\frac{\pi}{2}-\gamma, \frac{\pi}{2}+\gamma\bigr]\Big\}}}\cup
 (\varepsilon, \infty)e^{i(\frac{\pi}{2}+\gamma)}
$$
without changing the value of the integral. The contribution from the
integration over the circular arc $C_\varepsilon$ is easily seen to vanish as
$\varepsilon\to 0+$, so that we can simply integrate along the rays
$(-\infty,0)e^{\mp i(\frac{\pi}{2}+\gamma)}$. The estimates of the
integrals along both rays can be handled similarly and we therefore
only consider one of them. Let $z=r\, e^{i(\frac{\pi}{2}+\gamma)}$ for
$r\in(0,\infty)$, so that
$$
 \sqrt{z}=\sqrt{r} \Bigl[ \cos(\frac{\pi}{4}+\frac{\gamma }{2})+
 i\sin(\frac{\pi}{4}+\frac{\gamma }{2})\Bigr]
$$
and therefore that
$$
 \big |e^{zt}\big |\simeq e^{-r\gamma t} \,,
$$
since $\cos(\frac{\pi}{2}+\gamma)\simeq -\gamma$. Next notice that\\
$$
 \big |F(z)\big |=\frac{1}{\big |2\sqrt{z}+\beta e^{-\sqrt{z}|x_0|}\big
 |}\Big |\int e^{-\sqrt{z}|x_0-y|}u_0(y)\, dy\Big |\leq C\int
|u_0(y)|\, dy
$$
since $2\sqrt{z}+\beta e^{\sqrt{z}|x_0|}$ has zeros which are a
positive distance away from the path of integration and that $-\sqrt{z}\leq
-\frac{\sqrt{2r}}{2}$. The assumption that $u_0\in
\operatorname{L}^1(\mathbb{R})$ therefore yields that
$$
 \big | u(t,x_0) \big | \leq C\int _0^\infty e^{-rt \gamma}\, dr=\frac{C}{t\gamma },\: t>0.
$$
Notice that the decay is slower, i.e. like $\frac{1}{\sqrt{t}}$ for $\beta=0$, where we have an 
explicit representation of the kernel. It therefore follows from \eqref{uHat} that
$$
 u(t,x)\,=\,O(\frac{1}{\sqrt{t}})\text{ for }x\neq x_0,
$$
as claimed.
\end{proof}
The above proof shows that the decay of solutions varies with
location. It is easily seen that the decay is slowest for $x=0$.

%%% NEW SECTION

\section{The Linear Dirichlet Problem on an Interval}\label{SecLinear}
We now focus our attention on the case of a finite interval $[-L,L]$
with $L>x_0$ with homogeneous Dirichlet condition
\begin{equation*}
  \begin{cases}
    u_t+A_Lu=-\beta \langle \delta_{x_0},u \rangle\delta_0 &\text{in
    }\operatorname{H}^{-1}_L \text{ for }t>0,\\
    u(0)=u_0,&
  \end{cases}
\end{equation*}
where $\operatorname{H}^{-1}_L$ was defined in the precending section
as the dual of $\operatorname{H}^1_L=\operatorname{H}^1_0 \bigl(
(-L,L)\bigr) $. This captures the problem with homogeneous
Dirichlet data $u(\mp L)=0$ in weak form. 
Using the orthonormal basis of eigenfuctions of $A_L$ that, for
$k=1,2,3,\dots$, is given by   
$$
\varphi_{k,L}=\frac{1}{\sqrt{L}}\sin \bigl(k\pi\,\frac{x+L}{2L}\bigr),  
$$
it is seen that
\begin{equation}\label{sgL}
 e^{-tA_L}=\sum _{k=1}^\infty e^{-t \lambda^2_k}\,\big\langle \cdot,
 \varphi_{k,L} \big\rangle \,\varphi_{k,L},\text{ for }\lambda_{k,L}=\frac{k\pi}{2L},
\end{equation}
and therefore that
\begin{equation}\label{Fundamental_L}
 e^{-tA_L}\delta_0=\sum_{k=0}^\infty
 (-1)^k\exp\Bigl(-t\frac{(2k+1)^2\pi^2}{4L^2} 
 \Bigr)\frac{1}{L}\sin \Bigl((2k+1)\pi\frac{x+L}{2L}\Bigr).
\end{equation}
This series can also be written in terms of classical functions by
reducing the Dirichlet problem to the $4L$-periodic one by extension
\begin{equation}\label{perExtension}
\operatorname{ext}(u)(x)=\begin{cases}
-u(-2L-x),& x\in (-2L,-L),\\u(x),& x\in [-L,L],\\ -u(2L-x),& x\in
(L,2L].\end{cases}
\end{equation}
For the periodic problem it is know that the heat kernel can be
described by the theta function
\begin{equation}\label{thetaFct}
 \theta(z,q)=\sum_{k\in \mathbb{Z}}q^{n^2}e^{2i
   nz}=1+2\sum_{k=1}^\infty q^{n^2}\cos(2nz)
\end{equation}
and the Dirichlet heat kernel takes the form
\begin{equation}\label{dirHeatKernel}
  k_L(t,x)=\frac{1}{4L}\Big\{ \theta \bigl( \frac{\pi
    x}{L},e^{-\frac{\pi^2}{4L}t}\bigr) -\theta \bigl(\frac{\pi
    (x-2L)}{L},e^{-\frac{\pi^2}{4L}t}
  \bigr) \Big\}.
\end{equation}
Using the variation of constant formula for the new
operator $A_L$ and evaluating it at $x=x_0$, the initial boundary
value problem is therefore reduced to the integral equation
\begin{equation}\label{linDirVie}
  y(t)=\bigl( e^{-tA_L}u_0\bigr) (x_0)-\beta \int _0^t
  y(\tau)k_L(t-\tau,x_0)\, d\tau,
\end{equation}
where $y$ plays the role of $u(\cdot,x_0)$. As is the case on the
line, the problem can actually be solved by Laplace transform
methods. Reproducing the calculation of the previous section, one
arrives at
$$
 \hat u(s)=(s+A_L)^{-1}u_0-\beta \hat u(s,x_0)(s+A_L)^{-1}\delta_0,
$$
from which one deduces that
$$
 \hat u(s,x_0)=\frac{\bigl( (s+A_L)^{-1}u_0\bigr)(x_0)}{1+\beta \bigl(
   (s+A_L)^{-1}\delta _0\bigr)(x_0)}.
$$
The Green's function $G^L_s$ of the Dirichlet problem which is given by
$(s+A_L)^{-1}\delta _y$ can be obtained explicitly by
computing the general solution of the ODE $sz-z''=\delta_y,\:
y\in(-L,L)$, given by
$$
 z(x)=\sinh\bigl((y-x)\sqrt{s}\bigr)H(x-y)+Ae^{-x\sqrt{s}}+Be^{x\sqrt{s}},
$$
where $H$ is the Heaviside function, and determining the
coefficients $A,B$ by imposing the boundary conditions $z(\pm L)=0$.
Doing so yields
\begin{equation}\label{greensL}
G^L_s(x,y)=
\begin{cases}
  \frac{\sinh \bigl( \sqrt{s}(L-y)\bigr) \sinh \bigl(
    \sqrt{s}(L+x)\bigr)}{2\sqrt{s}\cosh(\sqrt{s}L) \sinh(\sqrt{s}L)},&
  -L\leq x\leq y,\\
  \frac{\sinh \bigl( \sqrt{s}(L+y)\bigr) \sinh \bigl(
    \sqrt{s}(L-x)\bigr)}{2\sqrt{s}\cosh(\sqrt{s}L) \sinh(\sqrt{s}L)},&
  y\leq x\leq L,  
\end{cases}
\end{equation}
for $y\in (-L,L)$. From this, it is seen that, as $L\to\infty$,
$$
G^L_s(x,y)\longrightarrow G_s^\infty (x,y)=
\frac{1}{2\sqrt{s}}e^{-\sqrt{s}|x-y|}=G_s(x-y)
$$
for $y\in (-\infty,\infty)$. The resolvent of
$A_{L,\beta}=A_L+\beta \delta_0 \delta_{x_0}^\top$ is given by
$$
 (s+A_{L,\beta})^{-1}\bullet=(s+A_L)^{-1}\bullet-\beta \frac{\bigl[
   (s+A_L)^{-1}\bullet\bigr](x_0)}{1+\beta\bigl[
   (s+A_L)^{-1}\delta_0\bigr](x_0)}(s+A_L)^{-1}\delta_0 \,,
$$
where $\bullet$ is a stand-in for the argument and has kernel
\begin{equation}\label{Lkernel}
  G^{L,\beta}_s(x,y)=G^L_s(x,y)-\beta \frac{G_s^L(x_0,y)}{1+\beta
    G_s^L(x_0,0)}G_L(x,0).
\end{equation}
The operator $A_{L,0}$ has positive spectrum and a principal
eigenvalue with positive eigenfunction. This remains true for the
non-selfadjoint operator $A_{L,\beta}$ up to a critical value
$\beta_+>0$.
\begin{prop}
The operator $-A_{L,\beta}$ generates an analytic
$c_0$-semigroup. This semigroup is positive if and only if $\beta\leq
0$. There is, however, a value $\beta_+>0$, below which the first
eigenfunctions of the operator and of the adjoint operator both remain
positive. In the parameter range $(0,\beta_+)$, the semigroup is
individually eventually positive in the sense of \cite{DGK16,DG18}.
\end{prop}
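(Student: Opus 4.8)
The plan is to treat the four assertions in turn, leaning on the $L=\infty$ analysis throughout. \emph{Analyticity} is immediate: the interpolation estimate for $B=\beta\delta_0\delta_{x_0}^\top$ displayed above is uniform in $L\in(0,\infty]$, so $B$ is $A_L$-bounded on $\operatorname{H}^{-1}_L$ with relative bound $0$, and since $-A_L$ generates an analytic $c_0$-semigroup, \cite[Theorem~2.4, p.~499]{Ka80} (equivalently the rank-one result of \cite{DS88}) gives the same for $-A_{L,\beta}$, every $\beta\in\mathbb{R}$. For \emph{positivity}, order $\operatorname{H}^1_L$ and $\operatorname{H}^{-1}_L$ pointwise, as in the $L=\infty$ proposition. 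When $\beta\le0$, $-A_{L,\beta}=-A_L+(-\beta)\delta_0\delta_{x_0}^\top$ is a positive rank-one perturbation of the generator of a positive holomorphic semigroup, so by \cite{AR91} resolvent positivity, hence positivity of $T_{A_{L,\beta}}$, is preserved. When $\beta>0$ I would repeat the explicit-Green's-function argument: \eqref{Lkernel} applied to $f=\delta_{x_0}\ge0$ yields
$$
\bigl[(s+A_{L,\beta})^{-1}\delta_{x_0}\bigr](0)=G^L_s(0,x_0)-\beta\,\frac{G^L_s(x_0,x_0)}{1+\beta G^L_s(x_0,0)}\,G^L_s(0,0),
$$
and the large-$s$ asymptotics $G^L_s(x,y)=\tfrac{1}{2\sqrt{s}}e^{-\sqrt{s}\,|x-y|}\bigl(1+o(1)\bigr)$, read off from \eqref{greensL}, show the subtracted term (of order $\beta/(4s)$) dominates the leading term (of order $e^{-\sqrt{s}\,x_0}/(2\sqrt{s})$), so the quantity is strictly negative for large real $s$; since $s\in\rho(-A_{L,\beta})$ there and $\delta_{x_0}\ge0$, the resolvent, hence $T_{A_{L,\beta}}$, is not positive.

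For the threshold $\beta_+$: $A_{L,0}$ is self-adjoint with compact resolvent and (cf.\ \eqref{sgL}) has a simple, isolated, dominant principal eigenvalue $\lambda_{1,L}^2=\pi^2/4L^2$ with eigenfunction $\varphi_{1,L}>0$ on $(-L,L)$ satisfying $\varphi_{1,L}'(-L)>0>\varphi_{1,L}'(L)$, and it is its own adjoint eigenfunction. The map $\beta\mapsto(s+A_{L,\beta})^{-1}$ being analytic and $A_{L,\beta}$ retaining a compact resolvent (rank-one perturbation of a compact resolvent), analytic perturbation of an isolated simple eigenvalue \cite{Ka66} produces, for small $|\beta|$, a branch $\mu(\beta)$ — real, since the characteristic equation $1+\beta G^L_s(x_0,0)=0$ has real coefficients — together with eigenfunctions $\psi_\beta$ of $A_{L,\beta}$ and $\psi^*_\beta$ of $A^*_{L,\beta}$ (which is of the same form with $0$ and $x_0$ interchanged), depending analytically on $\beta$ with $\psi_0=\psi^*_0=\varphi_{1,L}$. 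Since $\psi_\beta,\psi^*_\beta$ solve constant-coefficient ODEs away from the source point, the convergence to $\varphi_{1,L}$ as $\beta\to0$ holds in $\operatorname{H}^1_L\hookrightarrow\operatorname{C}([-L,L])$ and in $\operatorname{C}^1$ near $\pm L$; combining the strict sign of the endpoint derivatives of $\varphi_{1,L}$ with uniform positivity on compacta of $(-L,L)$ then gives $\psi_\beta>0$ and $\psi^*_\beta>0$ on $(-L,L)$ for all sufficiently small $\beta>0$, so one may set
$$
\beta_+:=\sup\Bigl\{\beta>0:\ \psi_b>0\ \text{and}\ \psi^*_b>0\ \text{on}\ (-L,L)\ \text{for every}\ b\in[0,\beta)\Bigr\}>0,
$$
which establishes the third claim.

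Finally, for $\beta\in(0,\beta_+)$, $-A_{L,\beta}$ is a real $c_0$-semigroup generator with compact resolvent whose spectral bound $-\mu(\beta)$ is an algebraically simple, dominant pole of the resolvent, whose leading eigenfunction $\psi_\beta$ and adjoint eigenfunction $\psi^*_\beta$ are strictly positive on $(-L,L)$, and whose domain $\operatorname{H}^1_0\bigl((-L,L)\bigr)$ embeds into $\operatorname{C}([-L,L])$; these are precisely the hypotheses from which the criterion of \cite{DGK16,DG18} deduces individual eventual positivity of $T_{A_{L,\beta}}$. The analyticity and the failure of positivity for $\beta>0$ are routine, the latter verbatim from the $L=\infty$ case; I expect the main obstacle to be the remaining two points — keeping the perturbed principal eigenfunctions strictly positive \emph{up to} the endpoints despite the derivative jump at the source point, which forces the $\operatorname{C}^1$ control near $\pm L$ to be matched carefully against the linear vanishing of $\varphi_{1,L}$ there, and checking in the correct weighted Banach-lattice setting that $-\mu(\beta)$ stays the dominant spectral value on the whole of $(0,\beta_+)$ and that the smoothing/domination hypotheses of \cite{DGK16,DG18} hold.
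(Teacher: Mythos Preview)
Your proposal is correct and covers all four assertions; the first two (analyticity; positivity iff $\beta\le0$) are handled exactly as the paper does, the paper having in fact relegated these to the discussion in Section~2 and a remark. Where you genuinely differ is in the last two points.

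For the existence of $\beta_+$, the paper proceeds by explicit ODE computation: writing the eigenfunction as $A_\pm\sin(\lambda x)+B_\pm\cos(\lambda x)$ on $\{\pm x>0\}$, imposing the Dirichlet, continuity, and derivative-jump conditions, and obtaining the $3\times3$ determinant equation
\[
\sin(\lambda L)\Bigl\{2\cos(\lambda L)+\beta\,\frac{\sin\bigl(\lambda(L-x_0)\bigr)}{\lambda}\Bigr\}=0
\]
for the eigenvalues. The first eigenfunction is thus known in closed form, and its continuous dependence on $\beta$ (together with the observation that the adjoint is $A_L+\beta\delta_{x_0}\delta_0^\top$, of identical structure) yields positivity for small $\beta$. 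Your abstract route via analytic perturbation of an isolated simple eigenvalue is cleaner and avoids writing down the eigenfunctions, but it gives less: the explicit determinant equation is reused repeatedly later in the paper to distinguish the $\beta$-independent real eigenvalues from those that merge into complex pairs.

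For individual eventual positivity, the paper does \emph{not} invoke the abstract criterion of \cite{DGK16,DG18} but gives the direct argument behind it: expanding
\[
e^{-tA_{L,\beta}}=e^{-t\mu^1_{L,\beta}}\,\varphi^1_{L,\beta}\Bigl\{\langle\psi^1_{L,\beta},\cdot\rangle+\sum_{k\ge2}e^{-t(\mu^k_{L,\beta}-\mu^1_{L,\beta})}\langle\psi^k_{L,\beta},\cdot\rangle\,\frac{\varphi^k_{L,\beta}}{\varphi^1_{L,\beta}}\Bigr\}
\]
via the biorthogonal system, noting that the quotients $\varphi^k_{L,\beta}/\varphi^1_{L,\beta}$ extend continuously to $\pm L$ by L'H\^opital (since $(\varphi^1_{L,\beta})'(\pm L)\ne0$), and concluding that for $u_0\ge0$ the bracketed series tends to the positive constant $\langle\psi^1_{L,\beta},u_0\rangle$. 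Your appeal to the abstract theorem is arguably more airtight, since the paper's expansion tacitly assumes completeness of the biorthogonal system and uniform control of the tail, neither of which is spelled out; the paper's hands-on argument, in exchange, makes the mechanism transparent and shows exactly why the onset time of positivity depends on $u_0$.

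The obstacle you flag---dominance of $\mu(\beta)$ throughout $(0,\beta_+)$ and positivity of the eigenfunctions up to the endpoints---is real but is handled, in both approaches, by the spectral gap at $\beta=0$ together with continuous dependence and the nonvanishing of $(\varphi_{1,L})'(\pm L)$; the paper addresses the endpoint issue explicitly via L'H\^opital.
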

\begin{proof}
We compute the first eigenvalue of the operator $A_{L,\beta}$ by
observing that its eigenfunction $\varphi$ is smooth away from $x=0$. We can
therefore assume that
$$
 \varphi (x)=A_\pm \sin(\lambda x)+B_\pm \cos( \lambda x),\: \pm x>0.
$$
The function $\varphi$ needs to satisfy the boundary conditions
$\varphi (\pm L)=0$, is continuous in the origin $\varphi(0-)=\varphi
(0+)$, where it enjoys the jump condition
$$
 -\varphi_x(0-)+\varphi_x(0+)=\beta \varphi(x_0),
$$
in order for the eigenvalue equation $-\varphi_{xx}+\beta
\varphi(x_0)\delta_0=\lambda^2 \varphi$ to hold. Continuity across the origin
implies that $B_-=B_+$, whereas the other conditions lead to the
system
$$
\begin{bmatrix} -\sin(\lambda L)&0&\cos(\lambda L)\\
  0&\sin(\lambda L)&\cos(\lambda L)\\ -\lambda &\lambda-\beta
  \sin(\lambda x_0)& -\beta \cos(\lambda x_0)\end{bmatrix}\, 
  \begin{bmatrix} A_- \\A_+\\B_-\end{bmatrix}=
  \begin{bmatrix} 0 \\0\\0\end{bmatrix}
$$
A necessary condition for the existence of nontrivial solutions is
given by the vanishing of the determinant which yields the equation
\begin{equation}\label{eigEq}
 \sin(\lambda L)\Bigl\{ 2\cos(\lambda L)+\beta
 \frac{\sin\bigl(\lambda(L-x_0)\bigr)}{\lambda}\Big\}=0.
\end{equation}
For $\beta=0$, the first zero is $\lambda^1_{L,0}=\frac{\pi}{2L}$ and
yields the eigenvalue $\mu^1_{L,0}=\bigl(\lambda^1_0\bigr)^2
=\frac{\pi^2}{4L^2}$. The associated eigenfunction $\varphi^1_{L,0}$
is given by
$\varphi^1_{L,0}(x)=\frac{1}{\sqrt{L}}\sin(\pi\frac{x+L}{2L})$. Continuous
dependence on $\beta$ of the equation \eqref{eigEq}, shows that the
first eigenvalue $\mu^1_{L,\beta}$ will be located near $\mu^1_{L,0}$
and that the associated eigenfunction $\varphi^1_{L,\beta}$ will be
close to $\varphi ^1_{L,0}$. Due to the heat sink at $x=0$, it will
develop a kink, which, with increasing $\beta$, will eventually make
the eigenfunction negative in and near $x=0$. The eigenfunction
$\varphi^1_{L,\beta}$ is depicted in Figure \ref{1stEfct} for several
values of the parameter $\beta$. The eigenfunctions are obtained
numerically by a spectral discretization that is presented in Section
\ref{SecNumerical}.
\begin{figure}
  \centering
  \includegraphics[scale=.75]{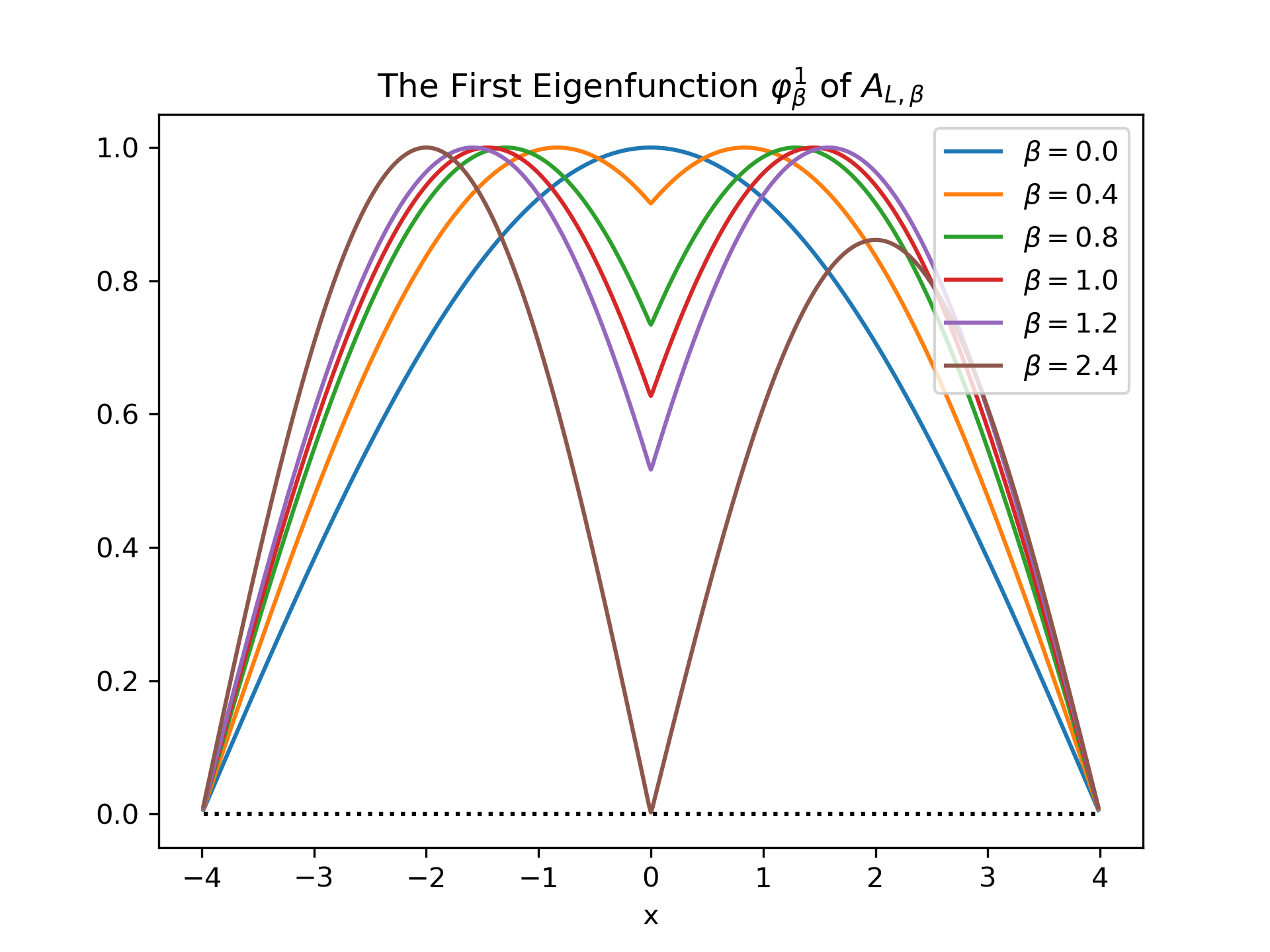}
  \caption{The first eigenfunction of $A_{L,\beta}$ as the parameter $\beta$
    increases for $L=4$ and $x_0=1$.}
  \label{1stEfct}
\end{figure}

Next we observe that the operator $A'_{L,\beta}$ adjoint to $A_{L,\beta}=A_L+\beta
\delta_0 \delta_{x_0}^\top$ is given by $A_L+\beta
\delta_{x_0} \delta_0^\top$ as immediately follows from
$$
 \langle A_{L,\beta}u,v \rangle =a_L(u,v)+\beta u(x_0)v(0)=
 a_L(v,u)+\beta v(0)u(x_0)=\langle u, A_{L,\beta}'v \rangle
 ,\: u,v\in\operatorname{H}^1_L. 
$$
These operators share their eigenvalues and, if we denote their
eigenfunctions by $\varphi ^k_{L,\beta}$, for $A_{L,\beta}$, and by
$\psi ^k_{L,\beta}$, for the adjoint operator, we obtain the spectral
resolution given by
$$
A_{L,\beta}=\sum_{k=1}^\infty \mu^k_{L,\beta}\big\langle
\psi^k_{L,\beta} ,\cdot \big\rangle \varphi^k_{L,\beta},
$$
and the associated semigroup is explicitly given by
\begin{align*}
e^{-tA_{L,\beta}}&=\sum_{k=1}^\infty \exp \bigl( -t\mu^k_{L,\beta}
\bigr) \big\langle \psi^k_{L,\beta} ,\cdot \big\rangle\varphi^k_{L,\beta}\\&=
\exp \bigl( -t\mu^1_{L,\beta}\bigr) \varphi^1_{L,\beta}\Big\{
\big\langle \psi^1_{L,\beta},\cdot \big\rangle  +
\sum_{k=2}^\infty \exp \Bigl( -t\bigl[\mu^k_{L,\beta}
-\mu^1_{L,\beta}\bigr] \Bigr)\big\langle \psi^k_{L,\beta} ,\cdot
  \big\rangle \frac{\varphi^k_{L,\beta}}{\varphi ^1_{L,\beta}}\Big\},
\end{align*}
where the second equality holds provided $\varphi^1_{L,\beta} >0$.
Notice that, in that case, the quotients
${\varphi^k_{L,\beta}}/{\varphi^1_{L,\beta}}$
are well defined up to the boundary thanks to L'H\^opital's rule and to
$\bigl(\varphi^1_{L,\beta}\bigr)'(\pm L)\neq 0$. The latter is seen either by
using the maximum principle or by direct inspection of the form of the
eigenfunctions.
Now the first eigenfunction $\psi^1_{L,\beta}$ of $A'_{L,\beta}$ is
also positive for small $\beta$. This can be seen either by a direct
computation similar to the one we preformed above for
$\varphi^1_{L,\beta}$ or by observing that the adjoint operator has the
same structure as the original one. It follows that, given any
positive initial datum $u_0\in \operatorname{H}^1_L$, or even in
$\operatorname{H}^{-1}_L$, one necessarily has that $\big \langle u_0,
\psi ^1_{L,\beta}\big \rangle>0$ 
and the corresponding solution will eventually be positive in
$(-L,L)$. The actual time at which this happens will depend on $u_0$,
leading to individual eventual positivity. This positivity holds as long as
both $\varphi ^1_{L,\beta}$ and $\psi_{L,\beta} ^1$ are positive, which
is the case for $\beta<\beta_+$ and some $\beta_+>0$. Figure
\ref{efctT} depicts the first eigenfunction of $A_{L,\beta}'$ for
several values of $\beta$.
\end{proof}
\begin{figure}
  \centering
  \includegraphics[scale=.75]{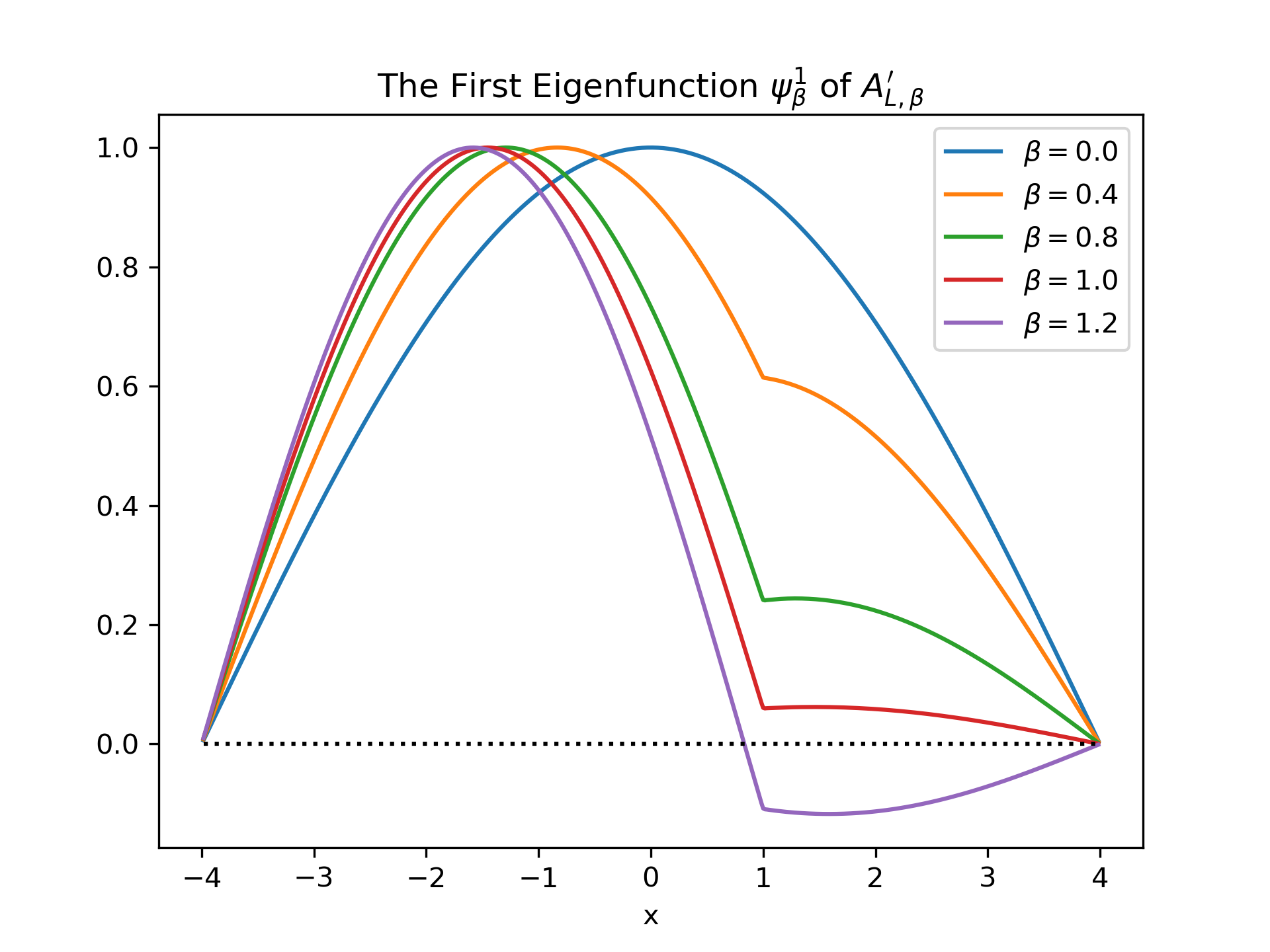}
  \caption{The first eigenfunction of $A_{L,\beta}'$ as the parameter $\beta$
    increases for $L=4$ and $x_0=1$.}
  \label{efctT}
\end{figure}
\begin{rem}
Notice that equation \eqref{eigEq}, which determines the eigenvalues
of $A_{L,\beta}$ shows that ``half'' of the eigenvalues, those arising
as zeros of $\sin(\lambda L)$, do not in fact depend on $\beta$ at
all. In the limit as $L\to\infty$ they contribute to the continuous
spectrum of $A_\beta$, which we already observed remains unchanged as
$\beta$ increases.
\end{rem}
The eigenvalues of $A_{L,\beta}$ generated by the zeros of the second
factor in \eqref{eigEq} are partly responsible for the onset of
complex spectrum, but mostly contribute to the real spectrum.
\begin{prop}
The zeros of the second factor of \eqref{eigEq} located in
$\mathbb{C}\setminus(-\infty,0]$ coincide with those of 
$1+\beta G^L_s(x_0,0)$ appearing in \eqref{Lkernel}. For any finite
$\beta>0$ and any $L>x_0>0$ large enough, there is only a finite
number of them and they are close to the zeros of $1+\beta
G_s(x_0)$.
\end{prop}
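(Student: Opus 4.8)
The plan is to reduce the statement to a single transcendental equation, identify it with the equation \eqref{poleEq} already analyzed on the line, and then run a Rouch\'e/Hurwitz argument. \emph{Step 1 (the two zero families coincide).} Evaluating \eqref{greensL} at $(x,y)=(x_0,0)$ (the branch $y\le x$) gives $G^L_s(x_0,0)=\dfrac{\sinh\bigl(\sqrt{s}(L-x_0)\bigr)}{2\sqrt{s}\cosh(\sqrt{s}L)}$, so on $\mathbb{C}\setminus(-\infty,0]$, where $\cosh(\sqrt{s}L)\neq0$, the equation $1+\beta G^L_s(x_0,0)=0$ reads $2\sqrt{s}\cosh(\sqrt{s}L)+\beta\sinh\bigl(\sqrt{s}(L-x_0)\bigr)=0$. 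Putting $s=-\lambda^{2}$, i.e. $\sqrt{s}=-i\lambda$ with $\operatorname{Im}\lambda>0$, and using $\cosh(-i\lambda L)=\cos(\lambda L)$, $\sinh\bigl(-i\lambda(L-x_0)\bigr)=-i\sin\bigl(\lambda(L-x_0)\bigr)$, this becomes $-i\bigl[2\lambda\cos(\lambda L)+\beta\sin(\lambda(L-x_0))\bigr]=0$, which after division by $-i\lambda$ is exactly the vanishing of the second factor of \eqref{eigEq}. Since $s\mapsto\lambda=i\sqrt{s}$ maps $\mathbb{C}\setminus(-\infty,0]$ bijectively onto the open upper half-plane and both equations are invariant under $\lambda\mapsto-\lambda$ and under conjugation, the zeros of $1+\beta G^L_s(x_0,0)$ in $\mathbb{C}\setminus(-\infty,0]$ correspond to the zeros of the second factor of \eqref{eigEq} with $\lambda^{2}\notin[0,\infty)$; moreover $\cosh(\sqrt{s}L)=\cos(\lambda L)$ is non-zero at the zeros $\lambda\in\frac{\pi}{L}\mathbb{Z}$ of the first factor, consistently with the preceding remark.

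\emph{Step 2 (comparison with $L=\infty$).} Dividing by $e^{\sqrt{s}L}$ and doubling rewrites the equation as $h_L(s):=\bigl(2\sqrt{s}+\beta e^{-\sqrt{s}x_0}\bigr)+e^{-2\sqrt{s}L}\bigl(2\sqrt{s}-\beta e^{\sqrt{s}x_0}\bigr)=0$, whose first summand is $2\sqrt{s}\bigl(1+\beta G_s(x_0)\bigr)$ and hence vanishes exactly at the roots of \eqref{poleEq}. On $\mathbb{C}\setminus(-\infty,0]$ one has $\operatorname{Re}\sqrt{s}>0$, so $e^{-2\sqrt{s}L}\to0$ as $L\to\infty$, uniformly on each set $\{|\arg s|\le\vartheta<\pi,\ |s|\ge\delta>0\}$. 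By Proposition \ref{spec}, for fixed finite $\beta>0$ equation \eqref{poleEq} has only finitely many roots $s_1^{\infty},\dots,s_N^{\infty}$ in $\mathbb{C}\setminus(-\infty,0]$; from the real and imaginary parts of \eqref{poleEq} they satisfy $|\sqrt{s_j^{\infty}}|\le\beta/\sqrt{2}$, and $h_\infty(0)=\beta\neq0$, so they stay away from $0$ and from the cut. Choosing a compact neighbourhood $K\subset\mathbb{C}\setminus(-\infty,0]$ of $\{s_j^{\infty}\}$ with smooth boundary carrying no root of \eqref{poleEq}, one has $|2\sqrt{s}(1+\beta G_s(x_0))|\ge m>0$ on $\partial K$ while $\sup_{\partial K}\bigl|e^{-2\sqrt{s}L}(2\sqrt{s}-\beta e^{\sqrt{s}x_0})\bigr|\to0$; Rouch\'e's theorem on $\partial K$ then gives, for $L$ large, exactly $N$ zeros of $h_L$ in $K$, and Hurwitz's theorem forces them to converge to $\{s_1^{\infty},\dots,s_N^{\infty}\}$ as $L\to\infty$. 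This yields both the finite count and the closeness to the zeros of $1+\beta G_s(x_0)$, \emph{provided} $h_L$ has no further zeros in $\bigl(\mathbb{C}\setminus(-\infty,0]\bigr)\setminus K$.

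\emph{Step 3 (no stray zeros — the crux).} Here I would work with the explicit finite-$L$ equation $E(\lambda):=2\lambda\cos(\lambda L)+\beta\sin(\lambda(L-x_0))=0$ and the identities $|\cos(\lambda L)|^{2}=\cos^{2}(\operatorname{Re}\lambda\, L)+\sinh^{2}(\operatorname{Im}\lambda\, L)$, $|\sin(\lambda(L-x_0))|^{2}=\sin^{2}(\operatorname{Re}\lambda\,(L-x_0))+\sinh^{2}(\operatorname{Im}\lambda\,(L-x_0))$, which turn $E(\lambda)=0$ into $2|\lambda|\tanh(|\operatorname{Im}\lambda|L)\le\beta$ and $2|\lambda|\,|\cos(\operatorname{Re}\lambda\,L)|\le\beta\cosh(|\operatorname{Im}\lambda|L)$. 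The first bound forces any zero with $|\operatorname{Im}\lambda|L\ge1$ to satisfy $|\lambda|\le\beta/(2\tanh1)$; hence a zero of large modulus has $|\operatorname{Im}\lambda|=O(\beta/(|\lambda|L))$ and, by the second bound, $\operatorname{Re}\lambda$ lies within $O(\beta/(|\lambda|L))$ of some $\xi_m:=(m+\tfrac12)\pi/L$ with $|\xi_m|\asymp|\lambda|$ large. Near $\xi_m$ one has $|E(\xi_m)|\le\beta$ while $E'(\xi_m)=-2\xi_mL\sin(\xi_mL)+O(\beta L)$ has modulus $\gtrsim|\xi_m|L$; on the disc $\{|\lambda'-\xi_m|\le c\beta/(|\xi_m|L)\}$ the affine part of $E$ dominates its quadratic remainder, uniformly in $L$ and in $|\xi_m|$ large once $c$ is chosen large, so $E$ has a single zero there, which is real because the disc is symmetric about $\mathbb{R}$ and $E$ has real coefficients. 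Thus $E$ has no genuinely complex zero of large modulus: the zeros of $1+\beta G^L_s(x_0,0)$ in $\mathbb{C}\setminus(-\infty,0]$ lie in a fixed compact set and, being isolated zeros of a non-trivial holomorphic function, are finite in number; the same transversality ($E'$ of order $L$ at the pertinent real zeros of $E$) also excludes stray complex zeros of $h_L$ near the cut inside a bounded region, completing Step 2. The genuine difficulty is the behaviour of the perturbed eigenvalues near the continuous spectrum $(-\infty,0]$ of the limit operator: they are pinned to the real axis far out precisely because the transverse derivative of the eigenvalue function grows like $L$, and the only borderline case is the discrete set $\beta=\frac{(4k+1)\pi}{x_0}$, $k=0,1,2,\dots$, at which \eqref{poleEq} itself has a root on the cut (cf. the proof of Proposition \ref{spec}), where a separate, more careful analysis near the cut is required, but even then the count stays finite and the closeness to the zeros of $1+\beta G_s(x_0)$ persists.
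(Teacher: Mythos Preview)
Your Step 1 is essentially the paper's argument, expressed as a direct change of variable $\sqrt{s}=-i\lambda$ rather than via the functional identity the paper writes as $J_L(\lambda)=H_L(i\lambda)$; the content is identical.

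For the finiteness and closeness claims you take a genuinely different route. The paper argues softly: it notes that $K_L(\lambda)=1+\beta G^L_{\lambda^2}(x_0,0)$ converges to $K(\lambda)=1+\beta G_{\lambda^2}(x_0)$ uniformly on compacta away from the cut, invokes the full description of the zero set of $K$ from Proposition~\ref{spec}, and appeals to simplicity of those zeros (deferred to the later discussion around \eqref{z-fct}) to conclude via a Hurwitz-type argument. It does not explicitly address stray zeros escaping to infinity or to the cut. Your approach is more quantitative: you rewrite the equation as $h_L(s)=h_\infty(s)+e^{-2\sqrt{s}L}(\cdots)$ and run Rouch\'e on a fixed compact $K$, then supply a priori bounds to exclude stray zeros outside $K$. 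Your large-$|\lambda|$ argument in Step 3 is correct and gives directly the uniform bound $|\lambda|\le\beta/(2\tanh 1)$ on all non-real zeros of $E$, which immediately yields finiteness independently of $L$---something the paper obtains only indirectly. What you gain is a self-contained argument that does not defer simplicity to a later proof; what the paper gains is brevity.

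One point deserves care: your exclusion of stray zeros near the cut for bounded $|s|$ is only asserted (``the same transversality\dots''). The bound $|\lambda|\le\beta/(2\tanh 1)$ confines non-real zeros of $E$ to a fixed disc, but does not by itself keep the corresponding $s$-values a positive distance from $(-\infty,0]$, which is what Rouch\'e on $\partial K$ needs. A clean way to close this is to note that on $\{|\lambda|\le C,\ \varepsilon\le|\operatorname{Im}\lambda|\}$ the convergence $h_L\to h_\infty$ is uniform and $h_\infty\neq0$ there (outside $K$), while on $\{|\lambda|\le C,\ 0<|\operatorname{Im}\lambda|<\varepsilon\}$ your linearization-plus-symmetry argument at the real zeros of $E$ (which are simple for $L$ large since $|E'|\gtrsim L$ there) shows every zero in a small conjugation-symmetric disc around a real point is real. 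The paper is equally informal on this strip, so your level of detail is comparable; just be aware that the work hides there.
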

\begin{proof}
First notice that the function $\cosh(\lambda L)$ only vanishes for
$\lambda = (\frac{\pi}{2L}+\frac{\pi}{L}k) i$, $k\in \mathbb{Z}$. This
means that, when looking for zeros of
$$
 K_L(\lambda)=1+\beta\frac{\sinh\bigl(\lambda (L-x_0)\bigr)}{2
   \lambda\cosh(\lambda L)}
$$
leading to eigenvalues $\lambda^2\in \mathbb{C} \setminus
(-\infty,0]$, we can safely consider the equation $J_L(\lambda)=0$
instead, where
$$
 J_L(\lambda)=2\cosh( \lambda L)+\beta \frac{\sinh\bigl(
 \lambda(L-x_0)\bigr) }{\lambda}=2\cosh( \lambda L)K_L(\lambda),
$$
when looking for eigenvalues with non-trivial imaginary part.
Zeros of $J_L$ in $\mathbb{C}\setminus(-\infty,0]$ therefore account
for all and any non-real eigenvalues of $A_{L,\beta}$. We already know
that the second factor in \eqref{eigEq} is the only possible source of
non-real eigenvalues of $A_{L,\beta}$, as well. We use the notation
$$
H_L(\lambda)=2\cos( \lambda L)+\beta \frac{\sin\big(
  \lambda(L-x_0)\bigr) }{\lambda}
$$
for that factor. Direct computation shows that, for these functions,
it holds that
$$
 H_L(\overline{\lambda})=\overline{H_L(\lambda)}, \:
 J_L(\overline{\lambda})=\overline{J_L(\lambda)},\: \lambda\in
 \mathbb{C},
$$
and that $H_L(-\lambda)=H_L(\lambda)$, $J_L(-\lambda)=J_L(\lambda)$.
This shows, unsurprisingly, that complex zeros come in complex
conjugate pairs. Well-known trigonometric (or hyperbolic) identities
show that
$$
 J_L(\lambda)=H_L(i \lambda),\: J_L(i
 \lambda)=H_L(-\lambda)=H_L(\lambda).
$$
It follows that
\begin{equation}\label{HJrelation}
 J_L(\alpha +im \alpha)=H_L(i \alpha -m \alpha ) = \overline{H_L(-i
   \alpha -m \alpha)}=\overline{ H_L(m \alpha +i \alpha)},\: \alpha\in
 [0,\infty).
\end{equation}
Varying $m\in[0,\infty)$ allows for the search of complex zeros
on rays emanating from the origin covering the first quadrant (with the
exception of the positive imaginary axis), and leads to the
determination of all complex eigenvalues in the upper-half plane. In
view of the stated properties of the functions of interest, this is
sufficient in order to locate all eigenvalues in $\mathbb{C} \setminus (-\infty,0]$. Identity
\eqref{HJrelation} readily implies that eigenvalues on $i(0,\infty)$,
which are obtained searching for zeros with $m=1$, correspond to the shared
zeros of $J_L$ and $K_L$ on the ray $(1+i)(0,\infty)$. For the
other rays in the first quadrant, i.e. for $m\in (0,\infty) \setminus
\{ 1\}$, zeros of $J_L$ on $(1,m\, i)(0,\infty)$ correspond to zeros of
$H_L$ on $(m,i)(0,\infty)$ and vice-versa. We conclude that, while the
equations for the zeros of $J_L$ and of $H_L$ are not equivalent,
these two functions have identical zero sets in the open first quadrant.\\
Next observe that $K_L(\lambda)=1+\beta
G_\lambda^L(x_0,0)$ and that $K_L \longrightarrow 1+\beta
G_\lambda(x_0) =:K(\lambda)$  as $L\to\infty$,
uniformly in subsets which are a positive distance away from
$\mathbb{C}\setminus (-\infty,0]$. Uniform convergence holds also for
the first derivative of these functions. The zeros with non-trivial
imaginary part of the limiting function have been fully characterized
in Proposition \ref{spec}. It therefore follows that, for any fixed
$\beta>0$ and for $L$ large enough, the zero set of $K_L$ in the
interior of the first quadrant is close to that of $K$, which was fully
understood in Proposition \ref{spec}. This is true due to the fact
that these zeros are non-degenerate, a fact that will follow from a
later more detailed discussion (see the proof of Proposition
\ref{betaCritical-Lcase} below). To be more precise, in the limit,
the countable simple eigenvalues on the imaginary axis do not accuumulate and
are non-degenerate as they are generated by the zeros of the function
$z_\infty$ appearing in \eqref{z-fct}. As the parameter $\beta$ is
dialed back down, these zeros move on smooth curves that do not cross
until they reach the real line for $L=\infty$. Due to the uniform
convergence mentioned above the same has to remain true away from the
real line for any large $L$, as well.  
\end{proof}
\begin{rem}
While it is not possible to carry out calculations as explicitly as
it was the case for $A_\beta$, i.e. for the full line, the fast
convergence of the resolvent/kernel as $L\to\infty$ allows one to
conclude that the zeros of $1+\beta G_s^L(x_0,0)$ located in the
interior of the first quadrant are very close to those of $1+\beta
G_s(x_0,0)$ already for modest values of $L$ (even for $L=2$ and
$x_0=1$). In particular, the complex eigenvalues of $A_{L,\beta}$
(situated outside a neighborhood of the origin, and they all are) do
behave in a manner very close to those of $A_\beta$. The eigenvalues
on the negative real axis essentially only contribute to the continuous spectrum
in the limit. This is even true for discretizations of $A_{L,\beta}$
for the first few crossings, which can be captured with relatively few
grid points. We refer to Figure \ref{crossing} for a plot of the curve
traced by the first pair of complex conjugate eigenvalues parametrized
by $\beta$ from the moment they leave the real line (for $L=4,8,16$
and $x_0=1$) and to the last section for details about the numerical discretization
used in the computations. Notice that the imaginary axis is crossed at
$\beta\simeq 70\simeq \beta_1$, regardless of the value of $L$. 
\end{rem}

\begin{prop}\label{betaCritical-Lcase}
For $L$ large enough, there are critical values $\beta_{0,L}>0$ and
$\beta_{1,L}>\beta _{0,L}$, so that at $\beta_{0,L}>0$ genuinely complex eigenvalues 
appear in the spectrum of $A_{\beta,L}$ and so that at $\beta_{1,L}$ 
a complex conjugate eigenvalue pair crosses the imaginary axis.
\end{prop}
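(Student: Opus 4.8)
The plan is to treat $L<\infty$ as a perturbation of the fully explicit analysis on the line. I would combine the uniform convergence, together with $\lambda$-derivatives, $K_L\to K:=1+\beta G_\lambda(x_0)$ on compact subsets of the $\lambda=\sqrt s$-plane bounded away from $i\mathbb R$ — recorded in the preceding proposition — with the explicit location and transversality of the critical configurations of $K$ from Proposition \ref{spec}, supplemented by a direct real-variable discussion near the positive real axis, where the convergence $K_L\to K$ is no longer uniform. As a preliminary remark: by the preceding proposition the genuinely complex eigenvalues of $A_{L,\beta}$ are precisely the conjugate pairs of zeros of $K_L$ in the open first quadrant of the $\lambda$-plane, and for finite $L$ they are genuine isolated eigenvalues; moreover $s=0$ is never an eigenvalue of $A_{L,\beta}$ while $A_{L,0}=A_L$ is positive self-adjoint, so for every $\beta$ below a threshold at which complex eigenvalues first occur the whole spectrum is real and positive. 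Hence a complex eigenvalue can only be created when two positive real eigenvalues $\mu^2$ of $A_{L,\beta}$ coalesce, i.e. when two real zeros of the factor $H_L(\mu)=2\cos(\mu L)+\beta\sin\!\bigl(\mu(L-x_0)\bigr)/\mu$ of \eqref{eigEq} collide, and a conjugate pair crosses the imaginary axis exactly when the corresponding zero of $K_L$ reaches the ray $\lambda\in(1+i)(0,\infty)$ — the ray $m=1$ of Proposition \ref{spec}.

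For $\beta_{1,L}$: at $L=\infty$, Proposition \ref{spec} furnishes a simple zero $\lambda_*\in(1+i)(0,\infty)$ of $K(\,\cdot\,;\beta_1)$ realizing the first crossing $s_*=\lambda_*^2\in i(0,\infty)$ with $\beta_1$ as in \eqref{beta1first}, and that crossing is transversal (since $\operatorname{Re}s(\beta)$ changes sign at $\beta_1$ while the branch is analytic there; equivalently, differentiate the defining relation $\beta=-2\lambda e^{x_0\lambda}$). Fix a small closed disc $D\ni\lambda_*$ disjoint from $i\mathbb R$ on which $\lambda_*$ is the only zero of $K(\,\cdot\,;\beta)$ for $\beta$ near $\beta_1$. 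Uniform convergence of $K_L$ and $\partial_\lambda K_L$ on $D$ plus Rouch\'e's theorem give, for $L$ large, a unique simple zero $\lambda_L(\beta)\in D$ of $K_L(\,\cdot\,;\beta)$, analytic in $\beta$ by the analytic implicit function theorem, with $\lambda_L(\beta)\to\lambda(\beta)$ and $\lambda_L'(\beta)\to\lambda'(\beta)$ uniformly near $\beta_1$. Then $s_L(\beta)=\lambda_L(\beta)^2$ is a genuinely complex eigenvalue of $A_{L,\beta}$ with $\operatorname{Re}s_L(\beta)\to\operatorname{Re}s(\beta)$ and nonvanishing transversal derivative for $L$ large; the real implicit function theorem produces $\beta_{1,L}\to\beta_1$ with $\operatorname{Re}s_L(\beta_{1,L})=0$ and $\operatorname{Im}s_L(\beta_{1,L})\to\operatorname{Im}s_*\neq0$, so the pair $\{s_L(\beta_{1,L}),\overline{s_L(\beta_{1,L})}\}$ crosses the imaginary axis transversally at $\beta_{1,L}$.

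For $\beta_{0,L}$: set $\beta_{0,L}:=\inf\{\beta>0:A_{L,\beta}\text{ has a non-real eigenvalue}\}$. Since a non-real eigenvalue is a zero of $K_L$ in the \emph{open} first quadrant, and zeros of an analytic family vary continuously while remaining in that open set under small perturbations of $\beta$, the set of such $\beta$ is open; hence complex eigenvalues are present for all $\beta$ in some interval $(\beta_{0,L},\beta_{0,L}+\delta)$ and absent for $\beta\le\beta_{0,L}$ — which is the asserted appearance of complex spectrum at $\beta_{0,L}$. By the preliminary remark $\beta_{0,L}>0$, and $\beta_{0,L}<\beta_{1,L}$ because the branch above supplies non-real eigenvalues throughout a left neighbourhood of $\beta_{1,L}$. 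The preliminary remark also forces the pair born at $\beta_{0,L}^+$ to arise from the coalescence, at some $\mu_0$, of two positive real zeros of $H_L$; expanding $H_L(\mu;\beta)=\tfrac12\partial_\mu^2H_L(\mu_0;\beta_{0,L})(\mu-\mu_0)^2+\partial_\beta H_L(\mu_0;\beta_{0,L})(\beta-\beta_{0,L})+\cdots$ near the double root and using that the two roots are real below $\beta_{0,L}$ identifies $\mu_0$ (by comparison with the line, $\mu_0$ is close to $\pi/(2x_0)$ and $\beta_{0,L}$ to $\pi/x_0$; only $\beta_{0,L}>0$ and $\beta_{0,L}<\beta_{1,L}$ are actually needed) and yields the nondegeneracy $\partial_\mu^2H_L(\mu_0)\neq0$, $\partial_\beta H_L(\mu_0)=\sin\!\bigl(\mu_0(L-x_0)\bigr)/\mu_0\neq0$ anticipated in the preceding proposition (valid for all large $L$ outside at most a discrete set of exceptional values).

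The delicate point is this last coalescence analysis for $\beta_{0,L}$: it unfolds right at the positive real axis, where the eigenvalues of $A_{L,\beta}$ cluster towards the continuous spectrum of $A_\beta$ as $L\to\infty$ and where $K_L\approx K$ is no longer uniform, so the birth of the complex pair cannot be imported from $L=\infty$ but must be extracted from the real-variable structure of $H_L$ — pinning down which pair of real eigenvalues collides, the collision point $\mu_0(L)$, the threshold $\beta_{0,L}$, and the normal-form nondegeneracy that makes the roots leave the real axis rather than bounce back. The crossing at $\beta_{1,L}$, by contrast, is a soft Rouch\'e-plus-implicit-function-theorem argument once the transversality of the $L=\infty$ crossing is granted.
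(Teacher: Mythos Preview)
Your proposal is correct and follows the same overall strategy as the paper: both deduce the existence of $\beta_{0,L}$ and $\beta_{1,L}$ from the uniform convergence $K_L\to K$ (with derivatives) on compacta away from the real axis, together with the complete description of the $L=\infty$ case in Proposition~\ref{spec}. The paper's own proof is a single sentence invoking exactly this; the Rouch\'e-plus-implicit-function-theorem argument you spell out for $\beta_{1,L}$ is precisely the content the paper leaves implicit, and the transversality you extract is what the paper later needs (and revisits in the remark following the proposition) for the Hopf bifurcation.

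Where you differ is in the treatment of $\beta_{0,L}$. The paper simply sweeps this under the same uniform-convergence rug, while you correctly flag that the birth of the complex pair occurs at the positive real $\lambda$-axis, exactly where $K_L\to K$ ceases to be uniform and where the mechanism for finite $L$ (coalescence of two real eigenvalues) is qualitatively different from $L=\infty$ (emergence from continuous spectrum). Your soft infimum-plus-openness argument is a clean way to secure the bare existence $0<\beta_{0,L}<\beta_{1,L}$ without having to track the collision point through that delicate zone; the coalescence/normal-form sketch you add is more than the proposition requires but anticipates the nondegeneracy statement the paper records at the end of Proposition~3.3. One small overreach: openness of $\{\beta:\sigma_p\not\subset\mathbb R\}$ alone yields complex eigenvalues for $\beta$ arbitrarily close to $\beta_{0,L}$ from above, not automatically on a full interval $(\beta_{0,L},\beta_{0,L}+\delta)$ --- but your subsequent normal-form expansion of $H_L$ near the double root closes that gap. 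The paper, for its part, concedes in the remark immediately after the proposition that the dependence of $\beta_{0,L}$ on $L$ is ``non-straightforward'' and does not attempt to locate it precisely.
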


\begin{proof}
This follows again from the uniform convergence of corresponding
functions determining the non-real eigenvalues of $A_{L,\beta}$ and
the complete knowledge of the limiting case $L=\infty$. 
\end{proof}

\begin{rem}
The parameter
value at which pairs of real eigenvalues merge and become complex
conjugate with non-trivial imaginary part appears to have a
non-straightfoward relation to the parameter $L$. As for the parameter
$\beta_{1,L}$, more can be said analyzing the equation $1+\beta
G^L_s(x_0,0)$ more closely. To shorten the formul\ae , we use the
notation $\operatorname{c}$, $\operatorname{s}$, 
$\operatorname{ch}$, $\operatorname{sh}$, and $\operatorname{th}$ for
the functions $\cos$, $\sin$, $\cosh$, $\sinh$, and $\tanh$,
respectively. Morever $L_0$ will denote $L-x_0$. As observed earlier,
looking for eigenvalues on the imaginary axis amounts to looking for zeros
of the form $\sqrt{s}=\lambda=\alpha +i\alpha$,
$\alpha>0$. Decomposing the function
$G^L_s(x_0,0)=\frac{\operatorname{sh}(L_0 \lambda)}{2 \lambda
  \operatorname{ch}(L \lambda)}$ into real and complex parts yields
\begin{multline*}
 \operatorname{Re}\bigl( G^L_s(x_0,0)\bigr)=\frac{1}{4
 \alpha}\frac{e^{-\alpha}+e^{(1-2L)\alpha}}{1+e^{-2L \alpha}}
 \frac{1}{\operatorname{c}^2(L \alpha)+\operatorname{th}^2(L
 \alpha)\operatorname{s}^2(L \alpha)}\cdot\\
 \Big\{ \operatorname{c}(L \alpha)
 \operatorname{th}(L_0 \alpha) \operatorname{c}(L_0 \alpha)+
 \operatorname{s}(L_0 \alpha) \operatorname{c}(L
 \alpha)+\operatorname{s}(L_0 \alpha) \operatorname{th}(L \alpha)
 \operatorname{s}(L \alpha)-\operatorname{th}(L_0 \alpha)
 \operatorname{c}(L_0 \alpha) \operatorname{s}(L \alpha)
 \operatorname{th}(L \alpha)\Big\},
\end{multline*}
and
\begin{multline*}
 \operatorname{Im}\bigl( G^L_s(x_0,0)\bigr)=\frac{1}{4
 \alpha}\frac{e^{-\alpha}+e^{(1-2L)\alpha}}{1+e^{-2L \alpha}}
 \frac{1}{\operatorname{c}^2(L \alpha)+\operatorname{th}^2(L
 \alpha)\operatorname{s}^2(L \alpha)}\cdot\\
 \Big\{\operatorname{s}(L_0 \alpha) \operatorname{c}(L
 \alpha)-\operatorname{c}(L_0 \alpha)\operatorname{th}(L_0 \alpha)
 \operatorname{c}(L \alpha)-\operatorname{th}(L_0 \alpha)
 \operatorname{c}(L_0 \alpha) \operatorname{s}(L \alpha)
 \operatorname{th}(L \alpha)-\operatorname{s}(L_0 \alpha)
 \operatorname{th}(L \alpha)\operatorname{s}(L \alpha)\Big\}.
\end{multline*}
Since the term $\operatorname{c}^2(L \alpha)+\operatorname{th}^2(L
\alpha)\operatorname{s}^2(L \alpha)$ never vanishes as follows from
the fact that it takes the value 1 in $\alpha=0$ and that zeros would 
otherwise ($\alpha\neq 0$) satisfy $\tanh^2(L \alpha)=-\cot^2(L
\alpha)$, the imaginary part of $1+\beta G^L_s(x_0,0)$ can only vanish
if the term in the curly brackets vanishes, equivalently iff
$$
 z_L(\alpha)=\operatorname{s}(L_0 \alpha)\bigl[ \operatorname{c}(L
 \alpha)-\operatorname{th}(L \alpha) \operatorname{s}(L \alpha)
 \bigr]- \operatorname{th}(L_0 \alpha) \operatorname{c}(L_0 \alpha)
 \bigl[ \operatorname{s}(L \alpha)+\operatorname{th}(L \alpha)
 \operatorname{s}(L \alpha)\bigr] =0.
$$
Now, for $\alpha\geq \alpha_0>0$ and $L>>1$, using the trigonometric
addition formul{\ae} to expand the terms with argument $L_0=L-x_0$ and
observing that $\tanh(L_0 \alpha)\simeq 1\simeq \tanh(L \alpha)$ in
this regime, it can be verified that
\begin{equation}\label{z-fct}
 z_L(\alpha)\simeq -\cos( \alpha x_0)-\sin (\alpha
 x_0)=z_\infty(\alpha).
\end{equation}
The convergence is quite fast as can be seen in Figure
\ref{L2infty}. For the first zero of interest, the curves are almost
identical even for small $L$, and even more so for subsequent
zeros. Once the zeros of the imaginary part are known (the first one is
the one we care about), the corresponding value of $\beta$ can be
recovered by setting
$$
 \operatorname{Re}\bigl( 1+\beta G^L_s(x_0,0)\bigr)=0, 
$$
and solving for $\beta$. A similar asymptotic analysis of the behavior
of the real part of $G^L_s(x_0,0)$ as that performed for the imaginary part,
reveals that
$$
 \operatorname{Re}\bigl( G^L_s(x_0,0) \bigr) \simeq
 \frac{e^{-\alpha}}{4 \alpha} \bigl[ \cos( \alpha x_0) -\sin( \alpha
   x_0)\bigr] \text{ for }L\simeq \infty.
$$
Again the convergence is very fast and the above approximation
delivers a good estimate of the critical value for moderately sized
$L$. It is interesting to observe that $\beta _{1,L}$ does not exhibit
monotone behavior in $L$, 
see Figure \ref{L2infty}.
\end{rem}

\begin{figure}
  \centering
  \includegraphics[scale=0.5]{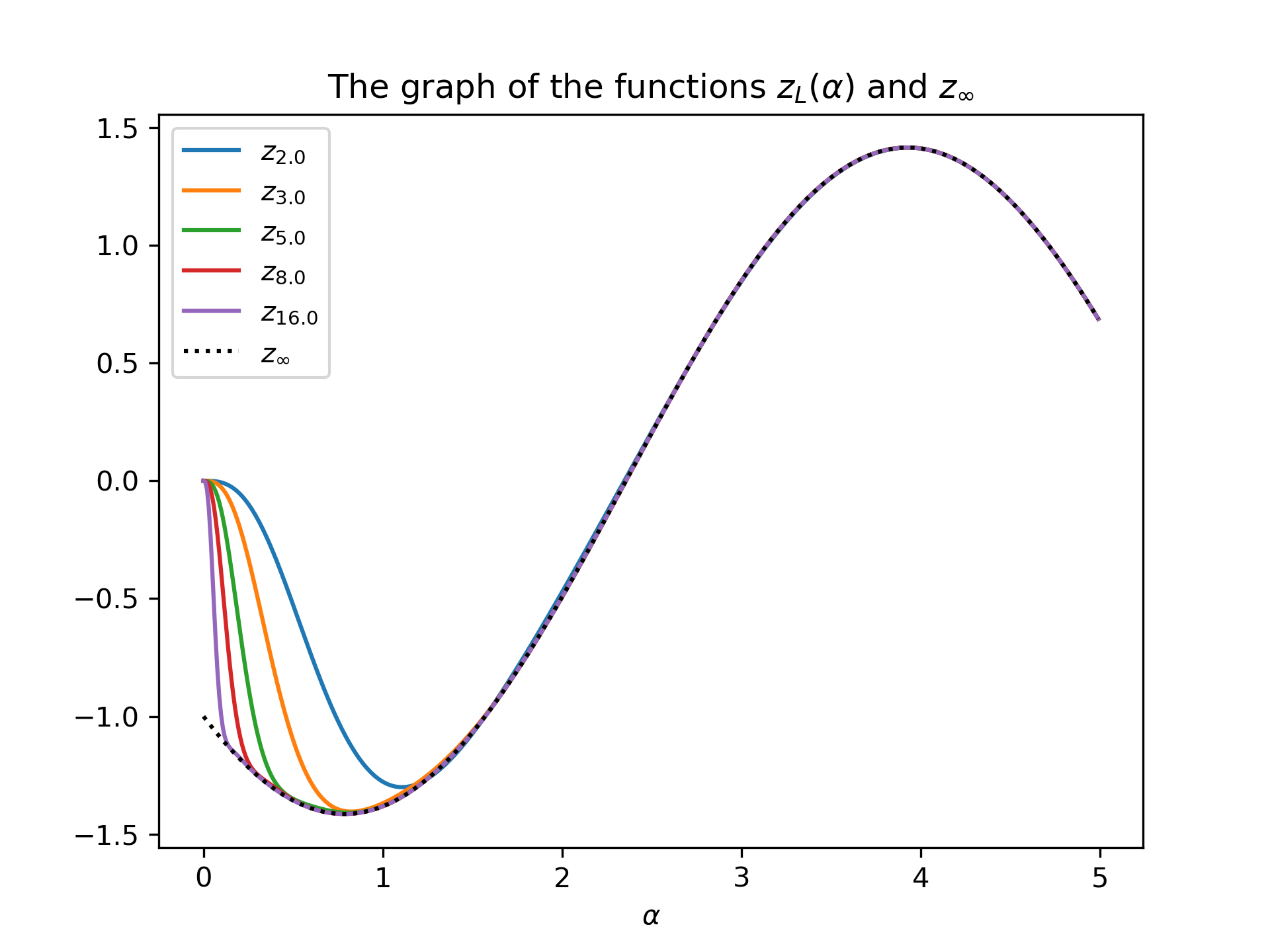}
  \includegraphics[scale=0.5]{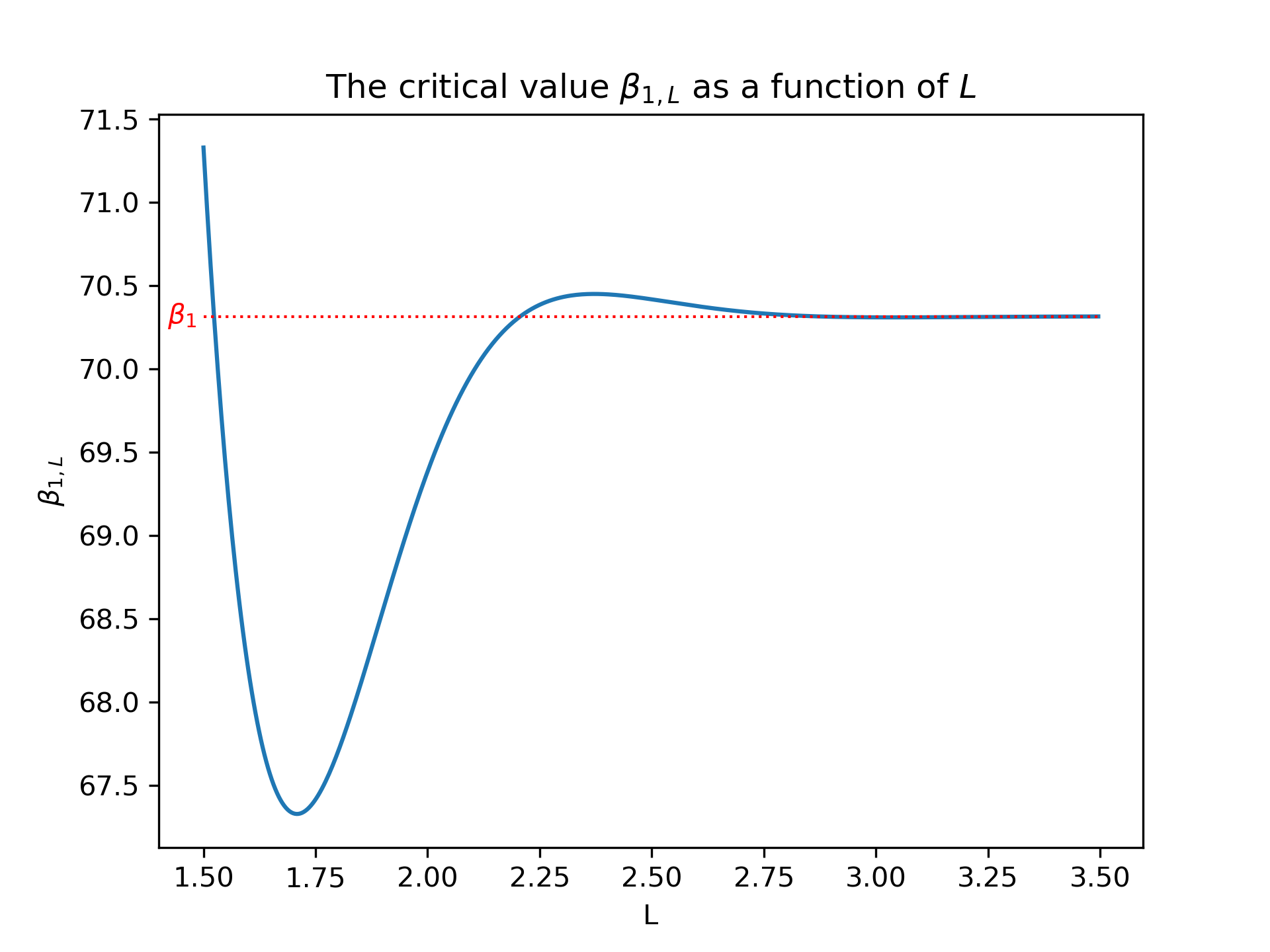}
  \caption{The behavior of the function $z_L(\alpha)$ as $L$ grows for $x_0=1$.}
  \label{L2infty}
\end{figure}

\begin{rem}\label{L_finite_Rem}
The behavior of the real spectrum for $L<\infty$ as a function of $L$
is harder to pinpoint analytically. ``Half'' the eigenvalues do not
depend on $\beta$ and just ``fill'' the negative real axis in the limit as
$L\to\infty$. As for the other half, an increasingly small fraction
(as $L$ increases) of them merge and become complex as $\beta$ gets
larger. This we know since only a finite number of non-real simple
eigenvalues appears with increasing $\beta$ (and for large $L$). A
numerical calculation of the zeros of the second term in the explicit equation
\eqref{eigEq} confirms this theoretical prediction and can be seen in
Figure \ref{merging}.
\end{rem}
\begin{figure}
  \centering
  \includegraphics[scale=.75]{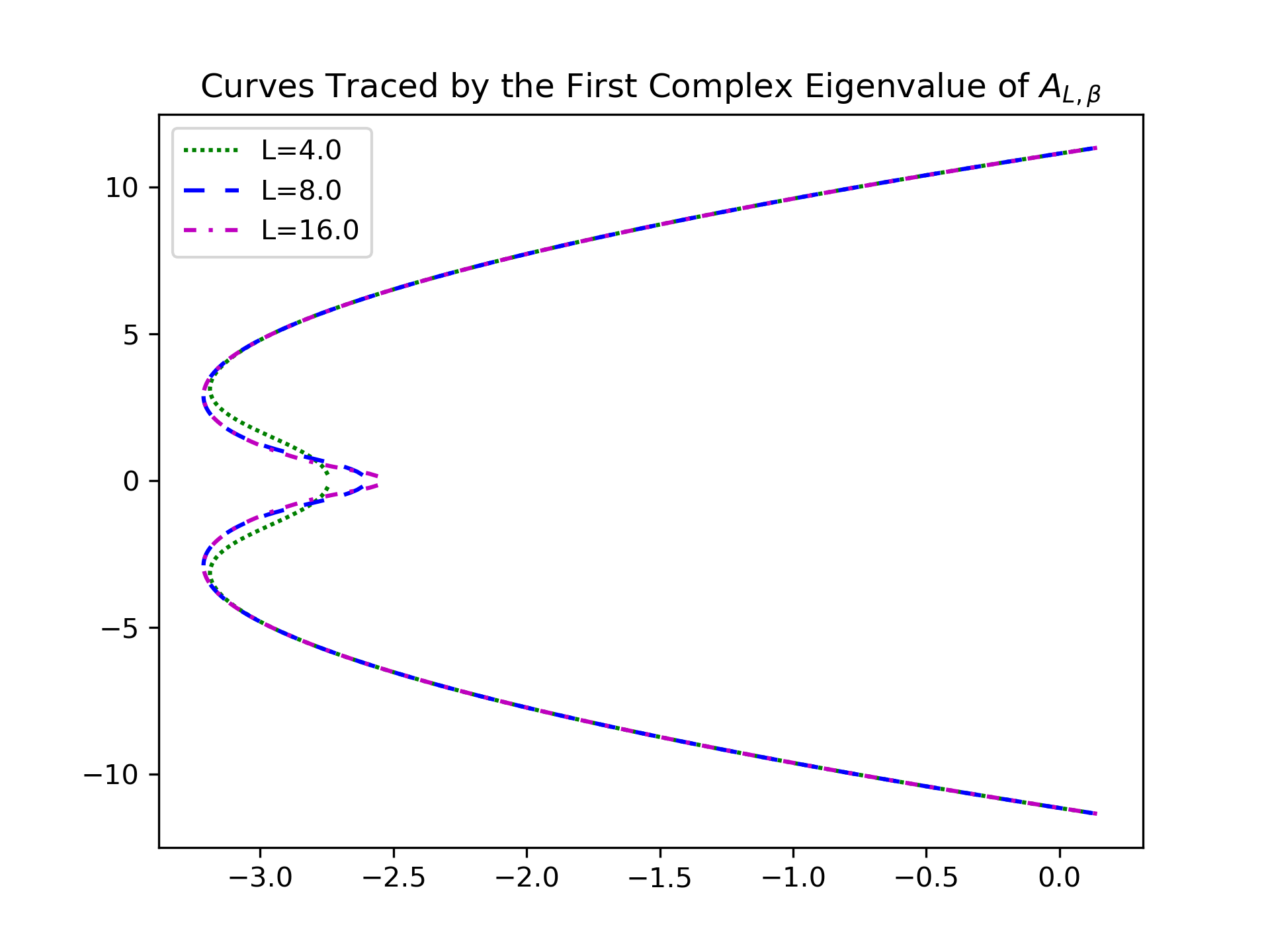}
  \caption{The curve traced by the first pair of complex eigenvalues
    of $A_\beta^L$ for interval half-lenghts $L=4,8,16$ and $\beta\in[3.0,73.0)$.}
  \label{crossing}
\end{figure}

\begin{figure}
  \centering
  \includegraphics[scale=.5]{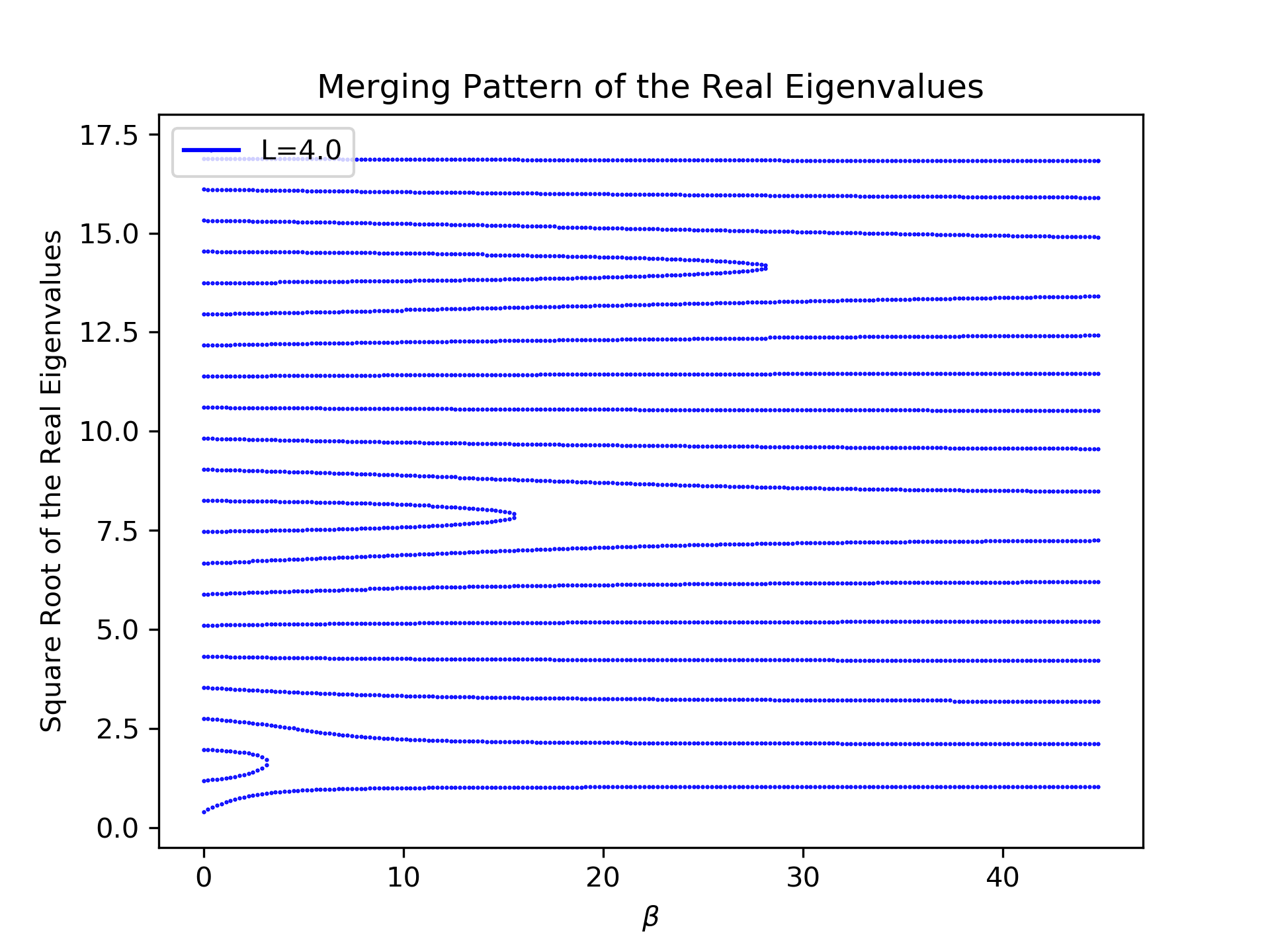}
  \includegraphics[scale=.5]{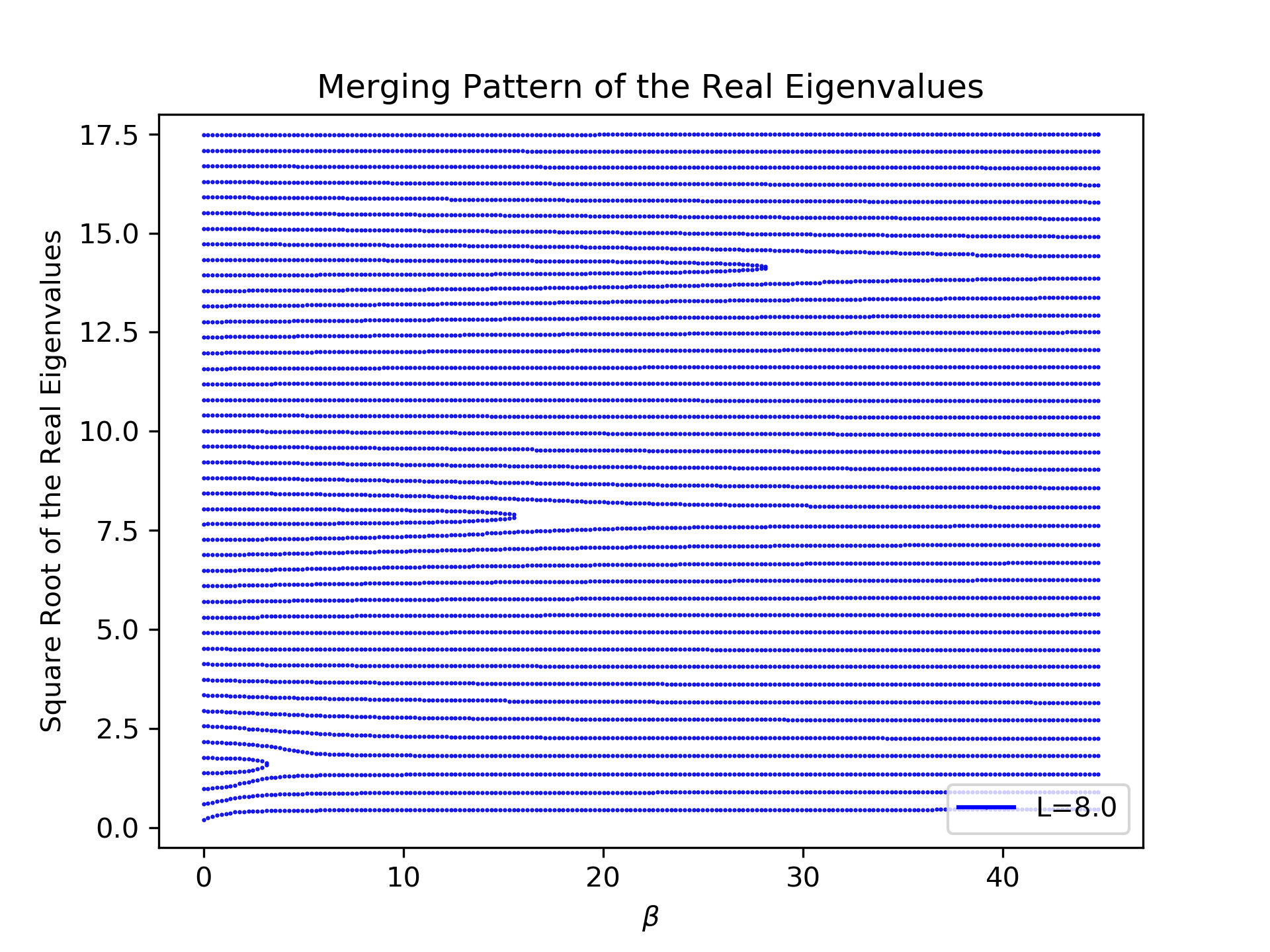} 
  \caption{The merging pattern of ``half'' of the real eigenvalues of
    $A_{\beta,L}\label{fig:re}$ observed when $\beta$ is 
    increased for different values of $L$: left $L=4$, right $L=8$.}  
  \label{merging}
\end{figure}
\section{The Nonlinear Equation}
Using the analytic semigroup generated by $A$ on
$\operatorname{H}^{-1}$, solutions of the nonlinear equation
\eqref{lineTs} can be looked for as fixed points of the equation
\begin{equation}\label{vcfEq}
  u(t)=e^{-tA}u_0-\int _0^t f \bigl(\beta u(t,x_0)\bigr)e^{-(t-\tau)A}\delta_0\,
  d\tau.
\end{equation}
We consider $u_0\in \operatorname{H}^1$ and look for a solution
$u\in \operatorname{C}\bigl( [0,\infty),\operatorname{H}^1\bigr)$. 
Evaluating at $x=x_0$ yields the Volterra integral equation
\begin{align}\label{vie}\notag
  u(t,x_0)&=\frac{1}{\sqrt{4\pi t}}\int e^{-\frac{|x_0-y|^2}{4t}}u_0(y)\,
  dy-\int _0^t f \bigl(\beta u(\tau,x_0)\bigr)\frac{1}{\sqrt{4\pi
            (t-\tau)}}e^{-x_0^2/4(t-\tau)}\, d\tau\\
  &=: g_\infty(t,x_0)-\int_0^t f \bigl(\beta u(\tau,x_0)\bigr)
    k_\infty(t-\tau,x_0)\, d\tau.
\end{align}
An analogous equation can be obtained for the solution of the
nonlinear problem on the interval $[-L,L]$ for $L>x_0$ simply
replacing the forcing function $g_\infty$ and the kernel $k_\infty$ with
\begin{equation}\label{vieData}
 g_L(t,x_0)=\bigl(e^{-tA_L}u_0 \bigr)(x_0)\text{ and } k_L(t,x_0),
\end{equation}
respectively, where $k_L$ was defined in \eqref{dirHeatKernel}. The
main difference between these two kernels is that $k_L\in
\operatorname{L}^1 \bigl([0,\infty)\bigr)$, due to the exponential
decay of the semigroup while $k$ only decays like $1/\sqrt{t}$ for
large $t$ and initial data in $\operatorname{H}^1$. We will denote by
\eqref{lineTs}$_L$ the corresponding nonlinear equation on the
interval $[-L,L]$ with homogeneous Dirichlet boundary condition with
the same nonlinearity $f$ and initial condition in
$\operatorname{H}^1_L$. To simplify the combined treatment of
the Dirichlet problem on $[-L,L]$ and the problem on the line we will
stipulate again that $\operatorname{H}^1_L=\operatorname{H}^1$ for
$L=\infty$. Existence and uniqueness are a straightforward
application of classical results about nonlinear Volterra integral
equations.
\begin{prop}
Let $L\in (x_0,\infty]$ and $u_0\in\operatorname{H} ^1_L$ be
given. Then\\
(i) The Volterra integral equation with forcing term $g_L$ and
kernel $k_L(\cdot,x_0)$ with $L\in(x_0,\infty)$ has a unique global
solution.\\
(ii) The initial value problem \eqref{lineTs}$_L$ has a unique
global solution $u\in \operatorname{C}\bigl(
[0,\infty),\operatorname{H}^1_L\bigr)\subset \operatorname{C}\bigl(
[0,\infty),\operatorname{H}^1\bigr) $ to any given $u_0\in
\operatorname{H}^1_L$ and, therefore, generates a
global continuous semiflow on $\operatorname{H}^1_L$.
\end{prop}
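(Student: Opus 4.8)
The plan is to deduce both parts from the Banach fixed point theorem applied to the scalar Volterra equation \eqref{vie} (and its $L$-analogue), and then to lift its solution to an $\operatorname{H}^1_L$-valued curve through the variation-of-constants formula \eqref{vcfEq}, read with $A_L$, $e^{-tA_L}$ in place of $A$, $e^{-tA}$. The structural fact that does the work is $x_0>0$: because of it the kernel $k_L(\cdot,x_0)$ has \emph{no} singularity at $t=0$, since the Gaussian factor $e^{-x_0^2/4t}$ in $k_\infty$ (resp.\ the combination of theta functions in \eqref{dirHeatKernel}) forces $k_L(t,x_0)\to 0$ as $t\to 0+$. Hence $k_L(\cdot,x_0)\in\operatorname{C}\bigl([0,\infty)\bigr)\cap\operatorname{L}^\infty\bigl([0,\infty)\bigr)$, and for $L<\infty$ also $k_L(\cdot,x_0)\in\operatorname{L}^1\bigl([0,\infty)\bigr)$ by the exponential decay in \eqref{sgL}. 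Likewise $g_L(\cdot,x_0)=\bigl(e^{-tA_L}u_0\bigr)(x_0)\in\operatorname{C}\bigl([0,\infty)\bigr)$, because $e^{-tA_L}$ restricts to a strongly continuous semigroup on $\operatorname{H}^1_L=\operatorname{dom}(A_L)$ endowed with the graph norm and point evaluation at $x_0$ is bounded on $\operatorname{H}^1_L\hookrightarrow\operatorname{BUC}(-L,L)$.

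For (i), put $M=\|f\|_\infty$, $\kappa=\operatorname{Lip}(f)$, and work in the space of continuous functions on $[0,\infty)$ carrying the weighted norm $\|y\|_\lambda=\sup_{t\ge0}e^{-\lambda t}\lvert y(t)\rvert$, on which $(\Phi y)(t)=g_L(t,x_0)-\int_0^t f\bigl(\beta y(\tau)\bigr)k_L(t-\tau,x_0)\,d\tau$ is well defined by boundedness of $f$. Global Lipschitz continuity of $f$ gives \[\|\Phi y_1-\Phi y_2\|_\lambda\ \le\ \beta\kappa\,\Bigl(\int_0^\infty e^{-\lambda s}\,\lvert k_L(s,x_0)\rvert\,ds\Bigr)\,\|y_1-y_2\|_\lambda,\] and the bracketed factor tends to $0$ as $\lambda\to\infty$ by dominated convergence (bound $\lvert k_L(\cdot,x_0)\rvert$ by its sup norm). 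So $\Phi$ is a contraction for $\lambda$ large and has a unique fixed point $y\in\operatorname{C}\bigl([0,\infty)\bigr)$; and since any solution of the Volterra equation satisfies the a priori bound $\lvert y(t)\rvert\le\lvert g_L(t,x_0)\rvert+M\,\|k_L(\cdot,x_0)\|_{\operatorname{L}^1(0,t)}$ it automatically lies in this space, so $y$ is the unique global solution (no finite-time blow-up, precisely because $f$ is bounded). The argument is indifferent to whether $L$ is finite, which is needed for (ii).

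For (ii), let $y$ be the fixed point from (i) and define \[u(t):=e^{-tA_L}u_0-\int_0^t f\bigl(\beta y(\tau)\bigr)\,e^{-(t-\tau)A_L}\delta_0\,d\tau.\] Identifying $\operatorname{H}^s_L=\operatorname{dom}\bigl(A_L^{(s+1)/2}\bigr)$ and using analyticity of $e^{-tA_L}$ with $\delta_0\in\operatorname{H}^{-\frac12-\varepsilon}_L$, one has the smoothing estimate $\|e^{-\sigma A_L}\delta_0\|_{\operatorname{H}^1_L}\le c\,\sigma^{-(3/4+\varepsilon/2)}$, integrable at $\sigma=0$ because $\varepsilon\in(0,\tfrac12)$; with $f$ bounded this makes the integral absolutely convergent in $\operatorname{H}^1_L$, and the standard regularity estimates for analytic semigroups (cf.\ \cite{Pa83}) then yield $u\in\operatorname{C}\bigl([0,\infty),\operatorname{H}^1_L\bigr)$, indeed Hölder in $t$. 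Evaluating the defining formula at $x=x_0$ reproduces exactly the Volterra equation solved by $y$, so $u(t,x_0)=y(t)$ and the representation is self-consistent: $u$ is a mild solution of \eqref{lineTs}$_L$ lying in $\operatorname{C}\bigl([0,\infty),\operatorname{H}^1_L\bigr)\subset\operatorname{C}\bigl([0,\infty),\operatorname{H}^1\bigr)$. Conversely, for any mild solution $v$ the trace $v(\cdot,x_0)$ solves the same Volterra equation, whence $v(\cdot,x_0)=y$ by (i) and then $v=u$, which is uniqueness. Finally $\Sigma(t)u_0:=u(t)$ obeys $\Sigma(0)=\operatorname{id}$ and $\Sigma(t+s)=\Sigma(t)\circ\Sigma(s)$ by uniqueness and autonomy of the equation, and continuity in $u_0$ follows because $u_0\mapsto g_L(\cdot,x_0)$ is bounded linear, the fixed point of the uniform contraction $\Phi$ depends Lipschitz-continuously on its forcing datum, and $u(t)$ depends continuously on $f\bigl(\beta y(\cdot)\bigr)$ through the same smoothing estimate; hence $(t,u_0)\mapsto\Sigma(t)u_0$ is continuous, i.e.\ a continuous semiflow on $\operatorname{H}^1_L$.

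The step that I expect to demand genuine care is the regularity bookkeeping in (ii): verifying that the variation-of-constants integral against the distributional source $\delta_0$ converges in $\operatorname{H}^1_L$ and yields a continuous (Hölder) $\operatorname{H}^1_L$-valued curve, the delicate point being the integrable singularity of $\sigma\mapsto e^{-\sigma A_L}\delta_0$ at the upper endpoint $\sigma=0$. The remainder is soft: boundedness of $f$ removes blow-up and makes existence global, global Lipschitz continuity supplies the contraction, and the reduction to the scalar Volterra equation is already in place in the text.
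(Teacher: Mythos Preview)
Your proof is correct and follows the same overall strategy as the paper: solve the scalar Volterra equation first, then lift to $\operatorname{H}^1_L$ via the variation-of-constants formula and analytic-semigroup smoothing. The only notable difference is in the implementation of (i): the paper uses the classical Picard iteration on each $[0,T]$ with the factorial bound $|\varphi_n(t)|\le \|g_L\|_{\infty,[0,T]}(CT)^n/n!$, while you use a single Banach fixed-point argument on $[0,\infty)$ with the exponentially weighted norm $\|y\|_\lambda$. Both are standard and equivalent here; your version has the minor advantage of delivering global existence in one stroke rather than piecing together local solutions, at the cost of needing the a~priori bound to pull an arbitrary solution back into the weighted space for uniqueness. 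For (ii) you are more explicit than the paper, which simply cites \cite{Pa83,Gol85} for the $\operatorname{H}^1_L$-regularity of the convolution term; your integrable-singularity estimate $\|e^{-\sigma A_L}\delta_0\|_{\operatorname{H}^1_L}\le c\,\sigma^{-(3/4+\varepsilon/2)}$ is exactly the mechanism behind that citation.
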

\begin{proof}
(i) First notice that $g_L(t,\cdot)=e^{-tA_L}u_0$ is continuous with
values in $\operatorname{H}^1$ for all $L$ in the given range since
$\operatorname{H}^1_L\hookrightarrow \operatorname{H}^1$ for any
$L$ by simply extending functions trivially. Since
$\operatorname{H}^1 \hookrightarrow \operatorname{C}$, it follows that
$g_L(\cdot,x_0)\in \operatorname{C} \bigl( [0,\infty)\bigr) $, and, in
view of the decay properties of the semigroups,
$\lim_{t\to\infty}g_L(t,x_0)=0$.
As we are keeping $x_0$ fixed in this argument, we remove it from the
notation from now on.
Existence is obtained by the standard iterative procedure starting
with $y_0=g_L(t)$ and recursively defining
$$
 y_n(t)=g_L(t)-\int _0^t f \bigl( \beta y_{n-1}(\tau) \bigr)
 k_L(t-\tau)\, d\tau.
$$
Setting $\varphi_n=y_n-y_{n-1}$ and $\varphi _0=g_L$, we can
write $y_n=\sum_{k=0}^n \varphi _k$ and a simple use of the global
Lipschitz continuity of the nonlinearity $f$ yields
$$
 \big | \varphi_n (t) \big |\leq C \int _0^t  \big | \varphi _{n-1}
 (\tau) \big |\, d\tau\leq \cdots \leq \| g_L\|
 _{\infty,[0,T]}\frac{(CT)^n}{n!},\: t\in [0,T].
$$
It follows that $y(\cdot)=\sum _{k=0}^\infty \varphi _k$ exists and is
continuous on $[0,T]$ for any $T>0$. Writing
$$
y=y_n+\sum_{k=n+1}^\infty \varphi_k=: y_n+\Delta _n,
$$
it is easily seen that
$$
 y_n(t)=y(t)-\Delta _n(t)=g_L-\int _0^t k_L(t-\tau) f \Bigl( \beta
 \bigl[ y(\tau)-\Delta _n(\tau)\bigr]\Bigr) ,
$$
from which one obtains that
\begin{align*}
 \Big | y(t)-g_L(t)+\int _0^t k_L(t-\tau) f \bigl( \beta
 y(\tau)\bigr)\, d\tau\Big | &\leq \big |\Delta _n(t)\big |+C\int _0^t
 \big |\Delta _{n-1}(\tau)\big |\, d\tau\\
 &\leq \big |\Delta _n(t)\big | +CT\|\Delta _{n-1}\| _{\infty,[0,T]}.
\end{align*}
The terms after the last inequality converge to zero uniformly in $t\in [0,T]$
for any fixed $T>0$, showing that $y$ indeed satisfies the Volterra
integral equation. Uniqueness follows from similar estimates for the
difference of two solutions. We refer to \cite[Chapter 4]{Linz85} for
missing details.\\
(ii) Once a solution $y_L\in \operatorname{C} \bigl( [0,\infty)\bigr)$ is
known, the right-hand-side of \eqref{vcfEq} is completely determined
and the unique mild solution of \eqref{lineTs} is obtained. It follows
from semigroup theory (see e.g. \cite{Pa83,Gol85}) that the right-hand-side of
\eqref{vcfEq} is in $\operatorname{C}
\bigl([0,\infty),\operatorname{H}^1_L\bigr) $. The equation
\eqref{lineTs}$_L$ therefore generates  a global continuous semiflow on the
space $\operatorname{H}^1_L$ for any $L>x_0$.
\end{proof}
%%%%%
% NEW Section: VIE Stability
%%%%%
\section{Asymptotic stability for solutions of the nonlinear Volterra integral equation}\label{SectionVIE}
In this section we will adapt the stability analysis
presented in \cite[Section 4]{GM20} that builds on results in \cite{Mi71}
to the integral equation  
obtained from the nonlinear thermostat problem $\eqref{lineTs}_{L}$ with Dirichlet boundary
conditions. 
The nonlinear Volterra integral equation is obtained from the global
continous semiflow $\bigl(\Phi_\beta,\operatorname{H}^1_L\bigr)$
induced by $\eqref{lineTs}_{L}$ for arbitrary fixed 
parameters $L,\beta\in(0,\infty)$ and $x_0\in(0,L)\,$.
In fact, let $\Phi_\beta(\cdot, u_0)$ be any orbit of the continuous
semiflow $\bigl(\Phi_\beta,\operatorname{H}^1_L\bigr)\,$, then
$$
 u(t):=\Phi _\beta (t,u_0)(x=x_0)
$$ 
solves the nonlinear, convolution-kernel, Volterra integral equation of the 
second kind
\begin{equation}\label{vie_L}
y(t)=g_L(t)+\int _0^t a_{L}(t-\tau)f\bigl(\beta y(\tau)\bigr)\, d\tau \,,\, t >0,
\end{equation}
where the forcing function $g_L\equiv g_L(u_{0})$ and the convolution 
kernel $k_L=-a_L$ are defined in \eqref{vieData} in the discussion at
the beginning of the previous section.

\begin{rem}
In this and the following sections we always assume that the
nonlinearity $f: \mathbb{R} \to \mathbb{R}$ has the following
properties: 
\begin{itemize}
    \item[(i)] $f$ is bounded, has a continuous derivative, and is
      globally Lipschitz continous. 
    \item[(ii)] $f(0)=0$ and $f'(0)=1$.
    \item[(iii)] $f(\beta w) (w - \frac{f(\beta w)}{\beta})>0$ for
      $w\neq 0\,$ and $\beta\in\mathbb{R}\backslash \{0\}$.
    \item[(iv)] For the statements on the bifurcation and
      the stability of periodic solutions we  additionally assume that
      $f\in \operatorname{C}^{\infty}(\mathbb{R},\:\mathbb{R})$.
\end{itemize}
It may be helpful to think of $f(w)$ as the specific example
$\tanh(w)$ that satisfies the above conditions.
\end{rem}
The main objective of this section is to prove the following
result. Its proof will be given at the end of the section. 

\begin{thm}\label{vie_L_result}
Assume that either \\
$$
 L>x_0>0\text{ and }\beta\in\bigl( 0,\hat{\beta}_1(x_0,L)\bigr),
 \text{ for some constant } \hat{\beta}_1(x_0,L)>0 
$$ 
or 
$$
\beta\in\bigl( 0,\beta_1(x_0)\,\bigr)  \text{ with
}\beta_1(x_0):=\frac{c_\pi}{x_0}\,,\,c_\pi:=\frac{3\pi}{\sqrt{2}}e^{\frac{3\pi}{4}} 
\text{ and }L>C(x_0)\text{ for some constant } C(x_0)>x_0>0
$$ 
holds. Then for arbitrary $u_0\in \operatorname{H}^1_L$ any solution
$y\in\operatorname{BC}\bigl( (0,\infty),\mathbb{R}\bigr)$ 
of the integral equation \eqref{vie_L} with parameters $\beta,L,x_0$
satisfies $\lim_{t\to\infty}y(t)=0\,$. 
\end{thm}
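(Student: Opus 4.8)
The plan is to prove first that the bounded solution $y$ converges to a limit as $t\to\infty$, and then to identify that limit as $0$. Since $L<\infty$, the forcing $g_L(t)=\bigl(e^{-tA_L}u_0\bigr)(x_0)$ decays exponentially (its slowest mode is $e^{-t\pi^2/4L^2}$), so in particular $g_L\in\operatorname{L}^1\cap\operatorname{BC}$ with $g_L(t)\to0$; and the kernel $k_L(\cdot,x_0)=-a_L$ is positive, smooth and exponentially decaying, so $a_L\in\operatorname{L}^1\bigl((0,\infty)\bigr)$ with Laplace transform $\widehat a_L(s)=-\widehat k_L(s):=-G^L_s(x_0,0)$ meromorphic across the imaginary axis. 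Identifying the limit is the easy half: the linearization of $\eqref{vie_L}$ has characteristic function $1-\beta\widehat a_L(s)=1+\beta\,G^L_s(x_0,0)$, whose ``DC value'' is $1+\beta\,G^L_0(x_0,0)=1+\beta\tfrac{L-x_0}{2}$, so the limit equation is $\xi=\widehat a_L(0)f(\beta\xi)=-\tfrac{L-x_0}{2}f(\beta\xi)$; by the sign condition $\operatorname{sign}(f)=\operatorname{sign}(\operatorname{id}_\mathbb{R})$ the right–hand side has the sign opposite to $\xi$ whenever $\xi\neq0$, so $\xi=0$ is its only solution. (Equivalently, $0$ is the unique steady state of $\eqref{lineTs}_L$.) Hence it suffices to prove that $y$ converges.

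For convergence I would invoke the absolute–stability theory for nonlinear Volterra integral equations of \cite{Mi71}, in the form adapted in \cite[Section 4]{GM20}: if $y=g+a\ast f(\beta y)$ with $a\in\operatorname{L}^1$, $g\in\operatorname{L}^1\cap\operatorname{BC}$ and $g(t)\to0$, with $f(\beta\cdot)$ lying in the sector $[0,\beta]$ (which is exactly hypothesis (iii), since $(\mathrm{iii})$ reads $f(\beta w)\bigl(w-f(\beta w)/\beta\bigr)>0$), and if a Popov frequency–domain inequality holds, then every bounded solution has a limit and that limit solves the limit equation above. The hypotheses on $g_L$, $a_L$, $f$ are the ones just recorded, so the entire burden is the frequency–domain condition. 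Because the static gain of the loop is stabilizing (the feedback removes heat when $u(x_0)>0$) while the destabilizing effect is a phase lag coming from the transport delay between $0$ and $x_0$, the bare circle criterion ($q=0$) is too weak and one must use the full Popov multiplier together with a loop transformation. Write $f(\zeta)=\zeta-h(\zeta)$, $h(\zeta):=\zeta-f(\zeta)$; by (iii), $h$ is smooth, globally Lipschitz, lies in the sector $[0,1)$, and $h(0)=h'(0)=0$. By Proposition \ref{spec} (for $L=\infty$) and Proposition \ref{betaCritical-Lcase} together with the convergence statements of Section \ref{SecLinear} (for $L$ large), $1+\beta\,G^L_s(x_0,0)$ has, for $\beta$ in the stated range, no zeros in a half–plane $\{\operatorname{Re}s\ge-c\}$, $c=c(\beta)>0$; hence $\beta k_L(\cdot,x_0)$ has an exponentially decaying $\operatorname{L}^1$ resolvent $r_L$ with $\widehat r_L=1/\bigl(1+\beta\widehat k_L\bigr)$, and $\eqref{vie_L}$ is equivalent to the transformed equation $y=\tilde g_L+\tilde k_L\ast h(\beta y)$ with $\tilde g_L=r_L\ast g_L\to0$, $\tilde k_L=r_L\ast k_L\in\operatorname{L}^1$ and $\widehat{\tilde k}_L=\widehat k_L/(1+\beta\widehat k_L)$. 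In these coordinates the Popov condition to be verified is: there are $q\ge0$ and $\varepsilon>0$ with
\[
 1+\operatorname{Re}\Bigl[(1+iq\omega)\,\widehat{\tilde k}_L(i\omega)\Bigr]\ge\varepsilon,\qquad\omega\in\mathbb{R}.
\]

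At $\omega=0$ this is automatic, since $\widehat{\tilde k}_L(0)=\tfrac{(L-x_0)/2}{1+\beta(L-x_0)/2}>0$, so the troublesome ``zero–frequency'' obstruction of the untransformed problem has been removed; for $|\omega|$ large it holds because $G^L_{i\omega}(x_0,0)=\dfrac{\sinh\bigl((L-x_0)\sqrt{i\omega}\bigr)}{2\sqrt{i\omega}\,\cosh\bigl(L\sqrt{i\omega}\bigr)}=O\bigl(e^{-c\sqrt{|\omega|}}\bigr)$, so $\widehat{\tilde k}_L(i\omega)$ and $\omega\widehat{\tilde k}_L(i\omega)$ tend to $0$. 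The substance of the estimate is at intermediate frequencies, and this is where I expect the main obstacle: one must use the explicit splitting of $G^L_s(x_0,0)$ into real and imaginary parts along the line $\sqrt s=\alpha+i\alpha$ recorded in the remark following Proposition \ref{betaCritical-Lcase}, the asymptotics $z_L(\alpha)\simeq z_\infty(\alpha)=-\cos(\alpha x_0)-\sin(\alpha x_0)$ and $\operatorname{Re}G^L_s(x_0,0)\simeq\tfrac{e^{-\alpha}}{4\alpha}\bigl(\cos(\alpha x_0)-\sin(\alpha x_0)\bigr)$, together with the fact (from Section \ref{SecLinear}) that the first zero of $1+\beta\,G^L_s(x_0,0)$ on the imaginary axis appears only at $\beta_{1,L}$ (respectively at $\beta_1(x_0)=c_\pi/x_0$ in the limit), so that for $\beta$ strictly below that value the transformed Popov locus stays a positive distance from the critical line. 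The constants $\widehat\beta_1(x_0,L)$ and $C(x_0)$ are, by definition, the thresholds up to which the resulting margin $\varepsilon>0$ can be maintained, and the uniform convergence (off the imaginary axis) of $\widehat{\tilde k}_L$ to its $L=\infty$ counterpart is what lets one take this threshold all the way up to $\beta_1(x_0)$ once $L>C(x_0)$. With the displayed inequality established, the cited theorem yields that $\lim_{t\to\infty}y(t)$ exists and solves $\xi=-\tfrac{L-x_0}{2}f(\beta\xi)$, hence $y(t)\to0$ by the first paragraph. Finally I note that the restriction $L<\infty$ is genuinely used here: when $L=\infty$ the kernel $k$ is only $O(t^{-1/2})$, hence not integrable, and the continuous spectrum of the linearization reaches the imaginary axis, so neither the resolvent reduction nor the frequency estimate survives.
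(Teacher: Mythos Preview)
Your plan shares the paper's overall philosophy --- reduce to a Popov-type frequency-domain condition and invoke the absolute-stability machinery of \cite{Mi71}/\cite{GM20} --- but it departs from the paper's route in a way that introduces both an unnecessary detour and a concrete error, and the central estimate is left undone.

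\textbf{The loop transformation is a detour, and the stated criterion is wrong.} The paper does \emph{not} transform the loop. It applies the Popov inequality directly to the original kernel $a_L$ via the Lyapunov decomposition $W_{\beta,q}=V_{\beta,q}+R_{\beta,q}$ (Lemmas~\ref{W_nonneg}--\ref{ParPlanCriterion}), so the condition to be checked is exactly \eqref{P} for $G_{L,x_0}$. In your scheme you pull out the linear part $f(\zeta)=\zeta-h(\zeta)$ and form $\tilde k_L$; but the nonlinearity in the transformed equation is $w\mapsto h(\beta w)$, not $h$, and this lies in the sector $[0,\beta)$, not $[0,1)$ (indeed $h(\beta w)/w\to\beta$ as $|w|\to\infty$ since $f$ is bounded). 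Hence your displayed Popov inequality should read $\tfrac{1}{\beta}+\operatorname{Re}\bigl[(1+iq\omega)\widehat{\tilde k}_L(i\omega)\bigr]\ge\varepsilon$, not with a leading $1$. The transformation also presupposes that $1+\beta\,\widehat k_L$ has no zeros in $\{\operatorname{Re}s\ge 0\}$ in order to produce an $L^1$ resolvent $r_L$; you cite the spectral results for this, but then define $\hat\beta_1$ only implicitly as ``the threshold up to which the margin can be maintained,'' which leaves the logical order (linear stability first, then Popov for the transformed plant) unclear. The paper avoids this circularity: $\hat\beta_1(x_0,L)$ is \emph{constructed} in \eqref{beta1hatL} as $1/\bigl(M(x_0,L)q(x_0)\bigr)$ with $M$ the maximum of the Popov functional $H(\omega)$ in \eqref{ExplFLx0}, so the criterion holds by definition.

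\textbf{The heart of the proof is precisely what you skip.} You correctly identify the intermediate-frequency estimate as the substantive step and then leave it open. The paper actually carries it out: Proposition~\ref{PopCritLim} verifies the Popov inequality for the limiting transfer function $G_{x_0}$ by choosing the tangent to the Popov curve at its leftmost real intersection $\omega_1(x_0)$ (explicit slope $1/q(x_0)=d_\pi/x_0^2$), reducing the check to a one-variable sign analysis via the auxiliary function $T$ of \eqref{T_fun}. The large-$L$ statement then follows from the uniform convergence $G_{L,x_0}\to G_{x_0}$ on sets $i\bigl(\mathbb{R}\setminus(-\varepsilon,\varepsilon)\bigr)$ (Remark~\ref{Ltoinfinity}), together with a separate argument near $\omega=0$. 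Finally, the conclusion $y(t)\to 0$ is obtained directly by a Barbalat-type argument (Proposition~\ref{Prop_stab_vie}) from the boundedness of $W_1$, so the two-step ``prove the limit exists, then identify it'' structure you propose is not needed.
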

The above constants $\hat{\beta}_1(x_0,L)$ and $C(x_0)$ will be
constructed in the proof of the theorem.  
We proceed by adapting the steps of the proof of the analogous result
in \cite{GM20} to the present situation.  
First we introduce the following slightly modified auxiliary function.
\begin{deff}
For $\beta,q\in(0,\infty)$ and $y\in\operatorname{BC}\bigl(
[0,\infty),\mathbb{R}\bigr)$ set
$$
 W_{\beta,q}(y)(t):=\sum_{i=1}^2 W_i(y)(t),\: t\geq 0,
$$
where
\begin{align*}
 W_1(y)(t)&:=\int _0^t f\bigl(\beta y(\tau)\bigr)\Bigl[
 y(\tau)-\frac{f\bigl(\beta y(\tau)\bigr)}{\beta}\Bigr]\, d\tau\;,
 \\W_2(y)(t)&:=q\,F_{\beta} \bigl( y(t)\bigr)\text{ for }F_{\beta}(z):=\int _0^z
 f(\beta\zeta)\, d\zeta\;.
\end{align*}
Note that, in the sequel, we will at times suppress the dependence on $\beta$, $q$,  
and on the function $y$ in the notation and simply write $W$ and $W_i$
for $i=1,2$.
\end{deff}
The proof of the following lemma is identical to the one given in
\cite[Lemma 4.3]{GM20} and is thus omitted.
\begin{lem}\label{W_nonneg}
Let $\beta,q\in(0,\infty)$ and
$y\in\operatorname{BC}\bigl([0,\infty),\mathbb{R}\bigr)$.
Then
$$
 W_i(t)\geq 0,\: t\geq 0,\: i=1,2.
$$
It therefore also holds that $W_{\beta,q}(t)\geq 0$ for $t\geq 0$.
\end{lem}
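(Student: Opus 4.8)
The plan is to establish both inequalities by a pointwise sign analysis of the integrand of $W_1$ and of the primitive $F_\beta$, using only the structural hypotheses (ii)--(iii) on $f$ together with $\beta, q > 0$; no regularity of $y$ beyond continuity is needed.

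First I would treat $W_1$. Its integrand evaluated at time $\tau$ is exactly $f\bigl(\beta y(\tau)\bigr)\bigl(y(\tau) - f(\beta y(\tau))/\beta\bigr)$, i.e. the expression appearing in hypothesis (iii) with $w = y(\tau)$. Hence it is strictly positive whenever $y(\tau) \neq 0$ and equals $0$ when $y(\tau) = 0$ (because $f(0) = 0$); in either case it is $\geq 0$. Integrating a nonnegative continuous function over $[0,t]$ gives $W_1(y)(t) \geq 0$ for every $t \geq 0$.

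Next I would treat $W_2$. Since $\operatorname{sign}(f) = \operatorname{sign}(\operatorname{id}_\mathbb{R})$ and $\beta > 0$, the map $\zeta \mapsto f(\beta\zeta)$ has the same sign as $\zeta$. Therefore $F_\beta'(z) = f(\beta z)$ is positive for $z > 0$ and negative for $z < 0$, so $F_\beta$ is strictly decreasing on $(-\infty,0]$ and strictly increasing on $[0,\infty)$, attaining its global minimum at $z = 0$ where $F_\beta(0) = 0$. Consequently $F_\beta(z) \geq 0$ for all $z \in \mathbb{R}$, and since $q > 0$ we get $W_2(y)(t) = q\, F_\beta\bigl(y(t)\bigr) \geq 0$ for every $t \geq 0$. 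Adding the two bounds yields $W_{\beta,q}(y)(t) = W_1(y)(t) + W_2(y)(t) \geq 0$.

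There is essentially no obstacle here: the statement is an immediate consequence of the sign conditions on $f$. The only point requiring a moment's care is the correct bookkeeping of signs --- in particular that the hypothesis $\operatorname{sign}(f) = \operatorname{sign}(\operatorname{id}_\mathbb{R})$ combined with $\beta > 0$ (rather than $\beta < 0$) is what forces $F_\beta \geq 0$, and that the cross term $y - f(\beta y)/\beta$ in $W_1$ carries the same sign as $y$ precisely by hypothesis (iii). This is why the lemma, like its counterpart \cite[Lemma 4.3]{GM20}, needs no separate argument beyond invoking those structural properties of $f$.
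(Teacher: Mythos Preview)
Your argument is correct and is precisely the intended one: the paper omits the proof entirely, referring to \cite[Lemma 4.3]{GM20}, and the structural hypotheses (iii) and the sign condition $\operatorname{sign}(f)=\operatorname{sign}(\operatorname{id}_\mathbb{R})$ are exactly what make the integrand of $W_1$ and the primitive $F_\beta$ nonnegative, just as you show.
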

The following decomposition of the auxiliary function W(y)(t) along
solutions $y(t)$ of the integral equation \eqref{vie_L} follows by a
simple verification that is carried out in \cite[Lemma 4.4]{GM20} in
full detail. We therefore do not reproduce the proof here.    
\begin{lem}\label{VandR}
Fix $L>0, x_0\in(0,L)$ and $\beta,q\in(0,\infty)\,$. Let $y\in\operatorname{BC}\bigl(
(0,\infty),\mathbb{R}\bigr)$ be a solution of the integral equation
\eqref{vie_L} with parameters $L,x_0,\beta$ and $u_0\in \operatorname{H}^1_L\,$. Then
\begin{equation}\label{Lyap}
  W_{\beta,q}(t)=V_{\beta,q}(t)+R_{\beta,q}(t),\: t\geq 0\,,
\end{equation}
where
$$
 V_{\beta,q}(t):=\int _0^t f \bigl(\beta y(\tau)\bigr)\bigl[
 g_L(\tau)+qg'_L(\tau)\bigr]\, d\tau+qF_{\beta} \bigl( y(0)\bigr) 
$$
and
$$
 R_{\beta,q}(t)\equiv R_{\beta,q,L}(t) :=\int _0^t f\bigl(\beta y(\tau)\bigr)\Big\{
 \int _0^\tau \bigl[ a_L(\tau-\sigma)+q\, a_L'(\tau-\sigma) \bigr]
 f\bigl(\beta y(\sigma)\bigr)\, d\sigma -\frac{f
 \bigl(\beta y(\tau)\bigr)}{\beta}\Big\}\, d\tau.
$$
Using convolutions, the last expression can be written more concisely as
$$
 R_{\beta,q}(t)=\int _0^t f \bigl(\beta
 y(\tau)\bigr)J_{\beta,q}(\tau)\, d\tau,
$$
where $J_{\beta,q}$ is defined for $\tau\geq 0$ by
\begin{equation}\label{J}
  J_{\beta,q}(\tau):=\Bigl[ \bigl[ a_L+q a_L'\bigr]*f \bigl(\beta y(\cdot)\bigr)\Bigr] (\tau)-
  \frac{f\bigl(\beta y(\tau) \bigr)}{\beta} \,.
\end{equation}
\end{lem}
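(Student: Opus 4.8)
The plan is to establish \eqref{Lyap} as a purely algebraic identity, by computing the two pieces $W_1$ and $W_2$ of $W_{\beta,q}$ separately against the integral equation \eqref{vie_L} and then recombining so that the forcing contributions collapse into $V_{\beta,q}$ and the remaining convolution contributions into $R_{\beta,q}$. First I would expand $W_1$: writing out its integrand and substituting \eqref{vie_L} for the factor $y(\tau)$ gives
$$
 W_1(t)=\int_0^t f\bigl(\beta y(\tau)\bigr)g_L(\tau)\,d\tau+\int_0^t f\bigl(\beta y(\tau)\bigr)\Big[\int_0^\tau a_L(\tau-\sigma)f\bigl(\beta y(\sigma)\bigr)\,d\sigma\Big]\,d\tau-\int_0^t \frac{f\bigl(\beta y(\tau)\bigr)^2}{\beta}\,d\tau.
$$
The first term is already the $g_L$-part of $V_{\beta,q}$, while the last two terms are the candidates for pieces of $R_{\beta,q}$.

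Next I would treat $W_2(t)=q\,F_\beta\bigl(y(t)\bigr)$. Since $F_\beta'=f(\beta\,\cdot\,)$, the chain rule gives $\tfrac{d}{d\tau}F_\beta\bigl(y(\tau)\bigr)=f\bigl(\beta y(\tau)\bigr)y'(\tau)$, so by the fundamental theorem of calculus
$$
 W_2(t)=q\,F_\beta\bigl(y(0)\bigr)+q\int_0^t f\bigl(\beta y(\tau)\bigr)y'(\tau)\,d\tau.
$$
The constant $q\,F_\beta(y(0))$ is precisely the last summand of $V_{\beta,q}$. To handle $y'$ I differentiate \eqref{vie_L}; differentiating the convolution $a_L*f(\beta y(\cdot))$ produces a boundary term $a_L(0)f\bigl(\beta y(\tau)\bigr)$ together with $\int_0^\tau a_L'(\tau-\sigma)f\bigl(\beta y(\sigma)\bigr)\,d\sigma$, so that $y'(\tau)=g_L'(\tau)+a_L(0)f\bigl(\beta y(\tau)\bigr)+\int_0^\tau a_L'(\tau-\sigma)f\bigl(\beta y(\sigma)\bigr)\,d\sigma$.

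The crucial observation — and the only non-bookkeeping step — is that the boundary term vanishes: $a_L(0)=-k_L(0,x_0)=0$, because $x_0>0$ means the Dirichlet heat kernel (respectively the Gaussian kernel $k_\infty(t,x_0)=\tfrac{1}{\sqrt{4\pi t}}e^{-x_0^2/4t}$ when $L=\infty$) evaluated at the point $x_0$ away from the source tends to $0$ as $t\to 0^+$. Granting $a_L(0)=0$, the term $q\int_0^t f(\beta y(\tau))y'(\tau)\,d\tau$ contributes exactly $q\int_0^t f(\beta y(\tau))g_L'(\tau)\,d\tau$ (the missing $g_L'$-part of $V_{\beta,q}$) plus $q\int_0^t f(\beta y(\tau))\int_0^\tau a_L'(\tau-\sigma)f(\beta y(\sigma))\,d\sigma\,d\tau$. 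Adding this last double integral to the two leftover terms from $W_1$ and factoring out the common factor $f(\beta y(\tau))$ reproduces verbatim the definition of $R_{\beta,q}$ in \eqref{Lyap}; recognizing the inner integral as $\bigl[(a_L+q a_L')*f(\beta y(\cdot))\bigr](\tau)$ then yields the concise form \eqref{J}.

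What remains is only to certify the regularity that legitimizes these steps: for $y\in\operatorname{BC}$ solving \eqref{vie_L} with the smooth forcing $g_L$ and the smooth kernel $a_L$ of \eqref{vieData}, the right-hand side is $\operatorname{C}^1$ on $(0,\infty)$, so $F_\beta\bigl(y(\cdot)\bigr)$ is absolutely continuous and the fundamental theorem of calculus applies, while differentiation of the convolution under the integral sign is justified by the smoothness of $a_L$ and continuity of $f(\beta y(\cdot))$. I expect the only substantive obstacle to be the justification of the vanishing boundary term $a_L(0)=0$ (together with the integrability of $g_L'$ near $t=0$, which follows from $u_0\in\operatorname{H}^1_L$ and analyticity of the semigroup); all remaining manipulations are a deterministic reorganization of terms, which is why the proof can be deferred to \cite[Lemma 4.4]{GM20}.
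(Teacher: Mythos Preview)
Your proposal is correct and is precisely the direct verification the paper defers to \cite[Lemma 4.4]{GM20}: substitute \eqref{vie_L} into $W_1$, rewrite $W_2$ via the fundamental theorem of calculus and differentiate the convolution, then regroup. The one nontrivial ingredient you single out, $a_L(0)=0$, is exactly what the paper records in Remarks~\ref{kernelrems}~(d), so nothing is missing.
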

To apply the Parseval-Plancherel Theorem as in \cite[Lemma 4.5]{GM20} we first collect some properties of the 
Fourier transform of the kernel $a_L(t)$ and of its derivative $a'_L(t)\,$.

%% Remarks on kernel and forcing functions, F-transforms

\begin{rems}\label{kernelrems}
{\bf (a)} For $L>0$ and $x_0\in (0,L)$ the kernel $a_L$ of the
Volterra integral equation \eqref{vie_L} is given by
$a_L(t)=-k_L(t,x_0)$, i.e. it holds that
\begin{equation}\label{aRep}
  a_L(t)=-\frac{1}{L}\sum_{k=0}^\infty (-1)^k \sin \Bigl[
  \frac{(2k+1)\pi}{2L}(x_0+L) \Bigr] \, e^{-t\,\lambda^2_{2k+1,L}},\:
  t>0,
\end{equation}
for $\lambda_{k,L}=\frac{k\pi}{2L}$ according to \eqref{sgL}, 

The kernel $a_L$ can be extended to a $\operatorname{C}^\infty$-function
on $\mathbb{R}$ by setting  $a_L(t):=0$ for $t\leq 0$. When no
confusion seems likely we will not use a different notation for this extension.\\
Besides the convolution kernel $a_L$, the forcing term $g_L$ can also be
expressed in terms of the basis of eigenfunctions to give
\begin{equation}\label{g_L_Rep}
 g_L(t)=\sum_{k=1}^\infty \langle u_0,\varphi _{k,L} \rangle
 \varphi_{k,L}(x_0) e^{-t\,\lambda^2_{k,L}} \;.
\end{equation}
\\[0.1cm]
%%%%%%%%%%%%%%%%%%%%%%%%%%%%%%%%%%%%%%%%%%%%%%%%%%%%%%%%%%%%%%%%%%%%%%%
%%%%%%%%%%%%%%%%%%%%%%%%%%%%%%%%%%%%%%%%%%%%%%%%%%%%%%%%%%%%%%%%%%%%%%%
{\bf (b)} For $u_0\in \operatorname{H}^1$ the forcing function $g_L$ and its
$n$-th derivative are in $\operatorname{BUC}^\infty \bigl(
(0,\infty)\bigr)$.
%%%%%%%%%%%%%%%%%%%%%%%%%%%%%%%%%%%%%%%%%%%%%%%%%%%%%%%%%%%%%%%%%%%%%%%%
% Comments
%%%%%%%%%%%%%%%%%%%%%%%%%%%%%%%%%%%%%%%%%%%%%%%%%%%%%%%%%%%%%%%%%%%%%%%%
% {\color{red}  and it holds that $$
%  g_{L}^{(n)}\in \operatorname{L}^p\bigl((0,\infty)\bigr)\text{ for
%  }p\in [1,\infty] \;,\;n\geq 0.
% $$
% This follows from \eqref{g_L_Rep}, i.e. from the exponential convergence
% of $g_L$ to the zero function as $t \to\infty$. {\bf This is incorrect as
% the singularity in $t=0$ needs to be considered!}}
%{\color{blue} I see that we need discuss $t=0$ as well, thanks for commenting. 
%Do you say that the assertion is true and we just need to add the
%correct discussion?.}\\
%{\color{mygreen} The result (I already removed it) just does not hold unless we assume that
%  $u_0$ is $\operatorname{C}^\infty$.}
%{\color{blue} We thus keep this version. Just need to check the rest of this section 
%one more time that this is all we need to know on $g_L$.}
%{\color{red}  Have you checked?}
%{\color{blue}  Yes, I think that is all we need to know on $g_L\,$. }
%%%%%%%%%%%%%%%%%%%%%%%%%%%%%%%%%%%%%%%%%%%%%%%%%%%%%%%%%%%%%%%%%%%%%%%%%%%%
\\[0.1cm]
%%%%%%%%%%%%%%%%%%%%%%%%%%%%%%%%%%%%%%%%%%%%%%%%%%%%%%%%%%%%%%%%%%%%%%%%%%%%
%%%%%%%%%%%%%%%%%%%%%%%%%%%%%%%%%%%%%%%%%%%%%%%%%%%%%%%%%%%%%%%%%%%%%%%%%%%%
{\bf (c)} As already observed in \eqref{dirHeatKernel} the Dirichlet
heat kernel can be obtained from the heat kernel on the whole real
line and can be expressed in terms of the theta function.  
The (general) theta function $\theta_1$ is defined as  
$$
\theta_1(\tau,z):=\sum_{k\in\mathbb{Z}}e^{\pi i k^2 \tau}\,e^{2\pi i
  kz} \:,\, \tau, z \in \mathbb{C}\,,\, \operatorname{Im}(\tau)>0, 
$$
or in less symmetric form by modifying the order of summation  
$$
\theta_1(\tau,z):=2 \sum_{k=0}^{\infty} (-1)^k q^{(\frac{k+1}{2})^2}\,
\sin((2k+1)z),\: \tau, z \in
\mathbb{C},\:\operatorname{Im}(\tau)>0,\: q:=e^{i\pi\tau}.
$$ 
By setting $\alpha(L, x_0)= \frac{\pi}{2L} (x_0 + L)$, it is
directly verified from \eqref{aRep} and the definition of $\theta_1$
that  
$$
 -a_L(t)=k_L(t,x_0)=\Bigl[e^{-tA_L}\delta_0\Bigr](x_0)=\frac{1}{2L}\theta_1\Bigl(
 i\frac{4\pi}{L^2}t ,\alpha(L, x_0)\Bigr).
$$
%%%%%%%%%%%%%%%%%%%%%%%%%%%%%%%%%%%%%%%%%%%%%%%%%%%%%%%%%%%%%%%%%%%%%%%%%%
{\bf (d)} The kernel $a_L(t)$ satisfies 
$$
 \lim _{t\to\infty} a_L(t)=0 \text{ and } \lim _{t\searrow 0} a(t)=0\:.
$$
The limit for $t\to\infty$ is obtained directly from \eqref{aRep}. 
To determine the one-sided limit we observe that    
$$
-a_L(t)=\frac{1}{2L}\theta_1\Bigl( i\frac{4\pi}{L^2}t ,\alpha(L, x_0)\Bigr),
$$
and therefore the limit for ${t\searrow 0}$ follows from known
properties of the heat kernel and the theta function.\\[0.1cm]
%%%%%%%%%%%%%%%%%%%%%%%%%%%%%%%%%%%%%%%%%%%%%%%%%%%%%%%%%%%%%%%%%%%%%%%%%%
%% Comments 
%%%%%%%%%%%%%%%%%%%%%%%%%%%%%%%%%%%%%%%%%%%%%%%%%%%%%%%%%%%%%%%%%%%%%%%%%%
%{\color{blue} If I find the limit of the theta function in a reference
%  I will add it, else fine from thinking about the heat kernel.} {\color{mygreen} I agree.}
%%%%%%%%%%%%%%%%%%%%%%%%%%%%%%%%%%%%%%%%%%%%%%%%%%%%%%%%%%%%%%%%%%%%%%%
%%%%%%%%%%%%%%%%%%%%%%%%%%%%%%%%%%%%%%%%%%%%%%%%%%%%%%%%%%%%%%%%%%%%%%%
{\bf (e)} The kernel $a_L(t)$ (extended by 0 for $t\leq0$) satisfies
$$
 a_L\in \operatorname{BC}^\infty(\mathbb{R},\mathbb{R})
$$
and its derivatives satisfy
$$
 a^{(n)}_L\in \operatorname{L}^p(\mathbb{R}),\: p\in [1,\infty],\: n\geq1.
$$
This again follows from the kernel's representation \eqref{aRep}.\\[0.1cm]
%
%%%%%%%%%%%%%%%%%%%%%%%%%%%%%%%%%%%%%%%%%%%%%%%%%%%%%%%%%%%%%%%%%%%%%%%%%%%%%%%%%%%%%%%%%%%%%%%%%%%
{\bf (f)} The series representation of the Fourier transforms of $a_L$ and its derivate $a'_L$ 
will be needed in the sequel to recover the Popov stability criterion for the Volterra integral equation. 
They are given by
\begin{equation}\label{aFourier}
  \hat a_L(\omega) = -\frac{1}{L}\sum_{k=0}^\infty (-1)^k 
  \frac{\sin \bigl[ \frac{(2k+1)\pi}{2L}(x_0+L) \bigr]}{i\omega + \lambda_{2k+1,L}},	
\end{equation}
and by
\begin{equation}\label{aprimeFourier}
  \widehat{a'_L}(\omega)=i\omega\hat a_L(\omega) =-\frac{i\omega}{L}\sum_{k=0}^\infty (-1)^k 
  \frac{\sin \bigl[ \frac{(2k+1)\pi}{2L}(x_0+L) \bigr]}{i\omega + \lambda_{2k+1,L}},
\end{equation} 
respectively.\\[0.1cm]
This follows from elementary properties of the Fourier transform
similarly as discussed in \cite[Remarks 2.2 (g)]{GM20}. \\ 
%%%%%%%%%%
%{\color{blue} I may add the proof and adapt the passage below
%  accordingly.} {\color{mygreen} No need for any additional
%  discussion: this follows from elementary properties of the Fourier transform.\\[0.1cm]}
%
%%%%%%%%%%%%%%%%%%%%%%%%%%%%%%%%%%%%%%%%%%%%%%%%%%%%%%%%%%%%%%%%%%%%%%%%%%
{\bf (g)} The Laplace transform $\mathcal{L}(a_L)$ of $a_L(t)$ is given by
$$
\mathcal{L} (a_L)(s)=-\frac{1}{L} \sum _{k=0}^\infty (-1)^k \frac{\sin
  \bigl[ \frac{(2k+1)\pi}{2L}(x_0+L) \bigr] }{s + \lambda_{2k+1,L}}
$$
for $s\in\big\{ z\in \mathbb{C} \, \big |\,
\operatorname{Re}(z)>0\big\}$. It also has the explicit representation
\begin{equation}\label{expltransL}
 \mathcal{L} (a_L)(s)=-\frac{\sinh\bigl(\sqrt{s}(L-x_0)\bigr)}{2\sqrt{s}\cosh(\sqrt{s}L)}\,.
\end{equation}
The series representation of the Laplace transform is obtained from \eqref{aRep} by 
elementary integrations. The explicit representation follows from \eqref{greensL}. 
A direct verification that the explicit form is represented by the above series is also 
given in \cite{CM09} by a partial fraction expansion and by
determination of the residuals of the poles of \eqref{expltransL}. \\[0.1cm] 
%
%%%%%%%%%%%%%%%%%%%%%%%%%%%%%%%%%%%%%%%%%%%%%%%%%%%%%%%%%%%%%%%%%%%%%%%%%
%
{\bf (h)} We note that, for given parameters $L>x_0>0\,$, the associated transfer function   
\begin{equation}\label{transLexpl}
G_{L,x_0}(s):=-\mathcal{L}(a_L)(s)= \frac{\sinh\bigl(\sqrt{s}(L-x_0)\bigr)}{2\sqrt{s}\cosh(\sqrt{s}L)}\,,
\end{equation}
can be expressed in terms of the Fourier transforms of $a_L$ and that of $a_L'\,$.  
In fact, for $\omega\in \mathbb{R}\backslash\{0\}$, it holds that
$$
 \operatorname{Re}\bigl( G_{L,x_0}(i \omega)\bigr)=-\operatorname{Re}
 \bigl( \hat a_L(\omega)\bigr)
$$
and 
\begin{equation*}
 \omega \operatorname{Im}\bigl( G_{L,x_0}(i \omega)\bigr)=
 \operatorname{Re}\bigl( \widehat{a'_L}(\omega)\bigr) .
\end{equation*}
The inequality
\begin{equation}\label{M}
 \operatorname{Re}\bigl( \hat a_L(\omega)\bigr)+
 q\operatorname{Re}\bigl(\widehat{a'_L}(\omega)\bigr)-\frac{1}{\beta}\leq 0
 ,\:\omega\in\mathbb{R}\backslash\{0\},
\end{equation}
is then equivalent to
\begin{equation}\label{P}
  \operatorname{Re}\bigl( G_{L,x_0}(i \omega)\bigr)-q \omega 
  \operatorname{Im}\bigl( G_{L,x_0}(i \omega)\bigr)\geq-\frac{1}{\beta}
  ,\:\omega\in \mathbb{R}\backslash\{0\}. 
\end{equation}
We will use this relationship between $G_{L,x_0}(s)$ and $\hat a_L$ and
$\widehat{a'_L}$ to verify that the stability condition \eqref{M}
(which we will obtain below from the analysis of the integral equation
\eqref{vie_L}) is equivalent to the well-known Popov stability
criterion \eqref{P} when applied to the transfer function $G_{L,x_0}$.
\end{rems}  
In the next lemma we apply the Parseval-Plancherel Theorem to derive an alternative
representation of $R_{\beta,q}\,$. It will reveal a condition,
expressed in terms of the Fourier transforms of $a_L(t)$
and $a'_L(t)\,$, which implies $R_{\beta,q}(t)\leq0\;$. 
The nonnegativity of the quantity $W_{\beta,q}$ will then allow to bound $V_{\beta,q}$ from above.  
The proof is straightforward and can be found in \cite[Lemma 4.5]{GM20}.

\begin{lem}\label{ParPlanCriterion}
Let $\beta,q,L\in(0,\infty)$ and $x_0\in(0,L)\,$. Let $y\in\operatorname{BC}\bigl(
(0,\infty),\mathbb{R}\bigr)$ be a solution of the integral equation
\eqref{vie_L} with parameters $L,x_0,\beta$ and $u_0\in \operatorname{H}^1_L$. Then
$$
 R_{\beta,q}(t)=\frac{1}{2\pi}\int _{-\infty}^\infty
 {\hat{f}^2_{\beta,\theta_t}}(\omega)\bigl[
 \hat a_L(\omega)+q\,\widehat{a_L'}(\omega)-\frac{1}{\beta}\bigr]
 \, d \omega,\: t\geq 0,
$$
where
$$
 f_{\beta,\theta_t}(\tau):=f \bigl(\beta
 y(\tau)\bigr)\theta_t(\tau),\:\tau\in \mathbb{R},\: t\geq 0,
$$
with 
$$
 \theta _t(\tau):=\begin{cases} 1,&\tau\in [0,t],\\ 0,&\tau\in
   \mathbb{R}\setminus[0,t]\, \end{cases}
$$
and $y(\tau):=0$ for $\tau<0\,$.
\end{lem}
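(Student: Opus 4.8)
The plan is to reduce everything to the Parseval--Plancherel identity, after rewriting $R_{\beta,q}(t)$ as the $\operatorname{L}^2$-pairing of two functions living on all of $\mathbb{R}$. Starting from the expression obtained in Lemma~\ref{VandR},
$$
 R_{\beta,q}(t)=\int_0^t f\bigl(\beta y(\tau)\bigr)\Bigl\{\int_0^\tau\bigl[a_L(\tau-\sigma)+q\,a_L'(\tau-\sigma)\bigr]f\bigl(\beta y(\sigma)\bigr)\,d\sigma-\frac{f\bigl(\beta y(\tau)\bigr)}{\beta}\Bigr\}\,d\tau,
$$
I would first carry out the support bookkeeping needed to turn both integrals into integrals over $\mathbb{R}$ involving only the truncated function $f_{\beta,\theta_t}$. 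Since $a_L$ is extended by $0$ on $(-\infty,0]$ and, by parts (d) and (e) of Remarks~\ref{kernelrems}, this extension is $\operatorname{C}^\infty$, the derivative $a_L'$ is likewise supported in $[0,\infty)$, so that $a_L(\tau-\sigma)+q\,a_L'(\tau-\sigma)=0$ for $\sigma>\tau$. Moreover $f_{\beta,\theta_t}(\sigma)=f(\beta y(\sigma))$ on $[0,t]$, $f_{\beta,\theta_t}(\sigma)=0=f(\beta y(\sigma))$ for $\sigma<0$ (using the convention $y(\sigma):=0$ there together with $f(0)=0$), and $f_{\beta,\theta_t}(\sigma)=0$ for $\sigma>t$. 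Combining these facts, for every $\tau\in[0,t]$ the inner integral equals the full convolution $\bigl[(a_L+q\,a_L')*f_{\beta,\theta_t}\bigr](\tau)$, while the outer integrand agrees with $f_{\beta,\theta_t}(\tau)\bigl\{[(a_L+q\,a_L')*f_{\beta,\theta_t}](\tau)-\tfrac1\beta f_{\beta,\theta_t}(\tau)\bigr\}$ and vanishes off $[0,t]$. This yields
$$
 R_{\beta,q}(t)=\int_{-\infty}^\infty f_{\beta,\theta_t}(\tau)\,g_t(\tau)\,d\tau,\qquad g_t:=(a_L+q\,a_L')*f_{\beta,\theta_t}-\tfrac1\beta f_{\beta,\theta_t}.
$$

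Next I would verify the hypotheses required for the Fourier machinery. The function $f_{\beta,\theta_t}$ is bounded (because $f$ is bounded) and compactly supported, hence lies in $\operatorname{L}^1(\mathbb{R})\cap\operatorname{L}^2(\mathbb{R})$; by Remarks~\ref{kernelrems}(e) we have $a_L,a_L'\in\operatorname{L}^1(\mathbb{R})$, so Young's inequality gives $(a_L+q\,a_L')*f_{\beta,\theta_t}\in\operatorname{L}^1(\mathbb{R})\cap\operatorname{L}^2(\mathbb{R})$ and therefore $g_t\in\operatorname{L}^2(\mathbb{R})$, with $f_{\beta,\theta_t}\,g_t\in\operatorname{L}^1(\mathbb{R})$ by Cauchy--Schwarz. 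Since $f_{\beta,\theta_t}$ and $g_t$ are real-valued, the Parseval--Plancherel theorem (with the paper's convention $\hat h(\omega)=\int e^{-ix\omega}h(x)\,dx$, under which $\int h_1\overline{h_2}=\tfrac1{2\pi}\int\hat h_1\overline{\hat h_2}$) applies, and the convolution theorem, legitimate because $a_L+q\,a_L'\in\operatorname{L}^1(\mathbb{R})$, yields $\hat g_t(\omega)=\bigl(\hat a_L(\omega)+q\,\widehat{a_L'}(\omega)-\tfrac1\beta\bigr)\hat f_{\beta,\theta_t}(\omega)$, cf. \eqref{aprimeFourier}.

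Substituting this into the Plancherel identity, using that $f_{\beta,\theta_t}$ is real, so that $\overline{\hat f_{\beta,\theta_t}(\omega)}=\hat f_{\beta,\theta_t}(-\omega)$ and $\lvert\hat f_{\beta,\theta_t}(\omega)\rvert^2$ is an even function of $\omega$ (and likewise $\overline{\hat a_L(\omega)}=\hat a_L(-\omega)$, $\overline{\widehat{a_L'}(\omega)}=\widehat{a_L'}(-\omega)$ because $a_L,a_L'$ are real), and finally performing the change of variables $\omega\mapsto-\omega$, I would arrive at
$$
 R_{\beta,q}(t)=\frac{1}{2\pi}\int_{-\infty}^\infty\lvert\hat f_{\beta,\theta_t}(\omega)\rvert^2\bigl[\hat a_L(\omega)+q\,\widehat{a_L'}(\omega)-\tfrac1\beta\bigr]\,d\omega,
$$
which is the asserted formula with the shorthand $\hat f^2_{\beta,\theta_t}(\omega)=\lvert\hat f_{\beta,\theta_t}(\omega)\rvert^2$; one may note in passing that, since the imaginary part of $\hat a_L+q\,\widehat{a_L'}$ is odd, the integral equals the manifestly real quantity $\frac{1}{2\pi}\int\lvert\hat f_{\beta,\theta_t}(\omega)\rvert^2\operatorname{Re}\bigl(\hat a_L(\omega)+q\,\widehat{a_L'}(\omega)-\tfrac1\beta\bigr)\,d\omega$, which is the form the subsequent Popov-type estimate will exploit. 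The only genuine subtlety in this otherwise routine computation — and hence the step I would treat most carefully — is the support bookkeeping of the first paragraph: one must be certain that extending the inner convolution from $[0,\tau]$ to $\mathbb{R}$ and replacing $f(\beta y(\cdot))$ by its truncation does not change the value of $R_{\beta,q}(t)$, which rests precisely on $\operatorname{supp}(a_L)\subseteq[0,\infty)$ (and the $\operatorname{C}^\infty$ extension across $0$, itself a consequence of $x_0\neq0$) together with the conventions $y(\tau):=0$ and $f(0)=0$.
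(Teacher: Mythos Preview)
Your proposal is correct and follows exactly the standard approach the paper has in mind: the paper itself does not give a proof but simply refers to \cite[Lemma~4.5]{GM20}, where the argument is precisely the support bookkeeping to extend both integrals to $\mathbb{R}$, followed by an application of Parseval--Plancherel and the convolution theorem. Your treatment of the only delicate point---that the extension of $a_L$ by zero is smooth so that $a_L'$ is also supported in $[0,\infty)$, and that the truncation $f_{\beta,\theta_t}$ can replace $f(\beta y(\cdot))$ without changing either integral---is exactly what is needed, and your concluding observation that only $\operatorname{Re}\bigl(\hat a_L+q\,\widehat{a_L'}\bigr)$ survives is the point exploited downstream in \eqref{M}.
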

The Fourier transforms of $a_L$ and $a_L'$ were discussed in Remarks
\ref{kernelrems}. Since $f_{\beta,\theta_t}\in
\operatorname{L}^1(\mathbb{R})\cap \operatorname{L}^2(\mathbb{R})$ for each 
$t\geq0\,$, its Fourier transform  is defined classically.

After these clarifications we can formulate a lemma showing that
controlling the sign of $R_{\beta,q,L}(t)$ for $t\geq0$ leads to a
bound for $V_{\beta,q,L}(t)$.  
The proof of this lemma is simpler than its counterpart in
\cite[Lemma 4.10]{GM20} for the case of Neumann boundary conditions and
boundary control. This is due to the fact that the semigroup
associated with the heat equation on $(-L,L)$ subject to Dirichlet
boundary conditions decays to the trivial solution exponentially for
any initial state $u_0\in H^1_L$.
 
\begin{lem}\label{lembound_V}
Let $\beta,L\in(0,\infty)$ and $x_0\in(0,L)\,$. Let
$y\in\operatorname{BC}\bigl( (0,\infty),\mathbb{R}\bigr)$ be a
solution of the integral equation \eqref{vie_L} with parameters
$L,x_0,\beta$ and $u_0\in \operatorname{H}^1_L$. Then, if for some
$q>0$ it holds that   
$$
R_{\beta,q,L}(t)\leq 0 \text{ for } t\geq 0,
$$ 
then the function $V(t)\equiv V_{\beta,q,L}(t)$ defined in Lemma
\ref{VandR} satisfies 
$$
 0\leq V(t)\leq c,\:t\geq 0,
$$
for a constant $c>0$ independent of $t\geq 0$.
\end{lem}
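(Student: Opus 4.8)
The plan is to read off both inequalities from the decomposition $W_{\beta,q}(t)=V_{\beta,q,L}(t)+R_{\beta,q,L}(t)$ of Lemma \ref{VandR} and from the nonnegativity $W_{\beta,q}(t)\geq 0$ of Lemma \ref{W_nonneg}. The lower bound is then immediate: since by hypothesis $R_{\beta,q,L}(t)\leq 0$ for $t\geq 0$, one has
$$
 V_{\beta,q,L}(t)=W_{\beta,q}(t)-R_{\beta,q,L}(t)\geq 0,\: t\geq 0.
$$
Note that this genuinely uses the sign of $R_{\beta,q,L}$: the summand $\int_0^t f\bigl(\beta y(\tau)\bigr)\bigl[g_L(\tau)+q g_L'(\tau)\bigr]\,d\tau$ of $V_{\beta,q,L}$ carries no definite sign on its own, even though the remaining summand $qF_\beta\bigl(y(0)\bigr)=W_2(y)(0)$ is nonnegative by Lemma \ref{W_nonneg}.

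For the upper bound I would use only the boundedness of $f$ and the $\operatorname{L}^1$-integrability of the forcing term. Set $M:=\|f\|_\infty$, and recall from \eqref{vie_L} that $y(0)=g_L(0)=u_0(x_0)$, which is finite because $\operatorname{H}^1_L\hookrightarrow \operatorname{BUC}(-L,L)$; in particular $\lvert F_\beta(y(0))\rvert\leq M\lvert u_0(x_0)\rvert$. From the definition of $V_{\beta,q,L}$ in Lemma \ref{VandR} it then follows that
$$
 V_{\beta,q,L}(t)\leq M\int_0^\infty \bigl\lvert g_L(\tau)+q g_L'(\tau)\bigr\rvert\,d\tau+qM\lvert u_0(x_0)\rvert=:c,\: t\geq 0,
$$
and it remains to check that $c<\infty$, i.e. that $g_L,g_L'\in \operatorname{L}^1\bigl((0,\infty)\bigr)$.

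It is at this last step that the Dirichlet boundary conditions make the argument short, which is the announced reason why this lemma is simpler than its Neumann analogue \cite[Lemma 4.10]{GM20}. On $[1,\infty)$ the eigenfunction expansion \eqref{g_L_Rep} and its termwise derivative show that $\lvert g_L(t)\rvert$ and $\lvert g_L'(t)\rvert$ are dominated by a constant multiple of $e^{-\lambda_{1,L}^2 t}$, so both lie in $\operatorname{L}^1\bigl([1,\infty)\bigr)$; this exponential decay of the Dirichlet semigroup is exactly what is missing in the boundary-control setting. On $(0,1]$ the function $g_L$ is bounded (indeed continuous up to $t=0$), while $g_L'$ obeys a parabolic smoothing estimate $\lvert g_L'(t)\rvert\leq C\,t^{-3/4}\,\|u_0\|_{\operatorname{H}^1_L}$, which can be obtained either from analyticity of $e^{-tA_L}$ on $\operatorname{H}^{-1}_L$ together with $\operatorname{dom}(A_L^\theta)=\operatorname{H}^{2\theta-1}_L$ and the embedding $\operatorname{H}^{1/2+\varepsilon}_L\hookrightarrow \operatorname{BUC}(-L,L)$, or from one integration by parts in the heat-kernel representation of $g_L$ together with the $\operatorname{L}^2$-bound $C t^{-3/4}$ for the spatial derivative of the Dirichlet heat kernel. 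Since $3/4<1$, $g_L'$ is integrable on $(0,1]$ as well; alternatively one may simply invoke Remarks \ref{kernelrems}(b). Combining the two ranges gives $g_L+q g_L'\in \operatorname{L}^1\bigl((0,\infty)\bigr)$ and hence $c<\infty$. The only mildly delicate point in the whole argument is the control of $g_L'$ as $t\searrow 0$; everything else is an immediate consequence of the two preceding lemmas and of $\|f\|_\infty<\infty$.
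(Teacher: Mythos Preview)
Your proof is correct and follows essentially the same approach as the paper's own argument: the lower bound comes from $V=W-R$ with $W\geq 0$ and $R\leq 0$, and the upper bound from $\|f\|_\infty<\infty$ together with $g_L,\,g_L'\in\operatorname{L}^1\bigl((0,\infty)\bigr)$. The only cosmetic difference is that the paper writes a single estimate $|g_L'(\tau)|\leq c\,\tau^{-(3/4+\varepsilon/2)}e^{-\alpha_L\tau}\|A_Lu_0\|_{\operatorname{H}^{-1}_L}$ valid for all $\tau>0$ (combining the analytic-semigroup smoothing with the exponential decay in one line), whereas you split the time axis at $t=1$; the content is the same.
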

\begin{proof}
As $R(t)\leq0$ and by the definition of $V$ and $W\,$, it holds that 
$$
0\leq V(t)=\int _0^t f \bigl(\beta y(\tau)\bigr)\bigl[ g_L(\tau)+q\,
g'_L(\tau)\bigr]\, d\tau+W_2(y)(0),\: t\geq 0,
$$
and therefore 
$$
V(t) \leq \int _0^t \big |f \bigl(\beta y(\tau)\bigr)\big | \, \big |g_L(\tau)\big | d\tau +
q\int _0^t \big |f \bigl(\beta y(\tau)\bigr)\big | \, \big |g'_L(\tau)\big |\, d\tau 
+ W_2(y)(0),\: t\geq 0\,.
$$
The assertion now follows from the assumed boundedness of
$f(\beta\cdot)$ and from the exponential decay and analyticity
of the semigroup
$e^{-tA_L}$ associated with the heat equation on $(-L,L)$ subject to
Dirichlet boundary conditions. Namely, in order to
bound $V(t)$ by a constant independent of $t$ we use the estimate
$$
 \big |g_L(\tau)\big |= \big |\bigl( e^{-\tau A_L}u_0\bigr)(x_0)
 \big| \leq c\|  e^{-\tau A_L}u_0\| _{\operatorname{H}^1_L}\leq c\,
 e^{-\alpha _L \tau}\| u_0\| _{\operatorname{H}^1_L}\:,
$$
and the estimate
$$
\big |g'_L(\tau)\big |= \big |\bigl( A_L e^{-\tau A_L}u_0\bigr)(x_0)
 \big| 
\leq 
c\| e^{-\tau A_L}A_L u_0\|
 _{\operatorname{H}^{\frac{1}{2}+\varepsilon}_L}
 \leq 
\frac{c}{\tau^{\frac{3}{4}+\frac{\varepsilon}{2}}}
 e^{-\alpha_L\tau} \| A_Lu_0\| _{\operatorname{H}^{-1}_L}\:,
$$
which are valid for $\tau>0$, $\varepsilon\in(0,\frac{1}{2})$ and by the choice of an appropriate 
constant $\alpha_L>0\,$. We refer to \cite{Gol85} or \cite{Pa83} for these 
standard estimates on analytic semigroups in interpolation spaces.  

%%%%%%%%% Comment
% {\em\color{mygreen} In order to estimate $g_L'$ the problem is not the
%   behavior at infinity, which is decaying, but rather in $t=0$, where
%   the singularity is not integrable if the initial datum is only in
%   $\operatorname{H}^1_L$. The derivative is merely in
%   $\operatorname{H}^{-1}$ and cannot even be evaluated pointwise!
%   However, by using the regularizing effect of the semigroup, we can
%   get an integrable singularity in zero.}

%%%%%%%% Comment
%{\em\color{blue} I see, thanks for correcting this. Do you have a
%  reference or a comment that justifies the very last estimate above?
%{\color{mygreen} Goldstein, Pazy e.g. The estimate is the standard
%  one for analytic semigroups in interpolation spaces.}} 
%%%%%%%%%%
\end{proof}

Next we verify that bounded and continuous solutions of the Volterra
integral equation \eqref{vie_L} are also uniformly continuous.
As for the previous lemma, the proof turns out to be simpler than its
counterpart \cite[Lemma 4.11]{GM20} in the case of Neumann boundary
conditions. 
\begin{prop}\label{uniformcont}
Let $\beta,L\in(0,\infty)$ and $x_0\in(0,L)\,$. Let
$y\in\operatorname{BC}\bigl((0,\infty),\mathbb{R}\bigr)$ be a solution
of the integral equation \eqref{vie_L} with parameters $L,x_0,\beta$
and $u_0\in \operatorname{H}^1_L$. Then
$y\in\operatorname{BUC}\bigl((0,\infty),\mathbb{R})\bigr)$.
\end{prop}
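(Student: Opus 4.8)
The plan is to write the solution as $y = g_L + I$, where $I(t) := \int_0^t a_L(t-\tau)\,f\bigl(\beta y(\tau)\bigr)\,d\tau$ is the Volterra term, and to prove uniform continuity of each summand separately. For $g_L$ nothing needs to be done, since $g_L \in \operatorname{BUC}^\infty\bigl((0,\infty)\bigr) \subset \operatorname{BUC}\bigl((0,\infty),\mathbb{R}\bigr)$ by Remarks \ref{kernelrems}(b). So the content of the proof is to show that the convolution term $I$ is uniformly continuous (in fact Lipschitz) on $(0,\infty)$.

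For this I would set $h(\tau):=f\bigl(\beta y(\tau)\bigr)$, extended by $0$ for $\tau\le0$, so that $\|h\|_\infty \le M := \sup_{\mathbb{R}}|f|<\infty$ because $f$ is bounded (note that boundedness of $y$ plays no role at this stage). Extending $a_L$ by $0$ to $(-\infty,0]$ as in Remarks \ref{kernelrems}(a), one has $I = a_L*h$. The two facts about the kernel I would use are $a_L \in \operatorname{BC}(\mathbb{R})$ and $a_L' \in \operatorname{L}^1(\mathbb{R})$, both recorded in Remarks \ref{kernelrems}(e) (and directly visible from the series \eqref{aRep}, which decays exponentially and vanishes at $0+$).

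Then, for $0<s<t$, I would split
\begin{equation*}
  I(t)-I(s) = \int_s^t a_L(t-\tau)\,h(\tau)\,d\tau + \int_0^s \bigl[a_L(t-\tau)-a_L(s-\tau)\bigr]h(\tau)\,d\tau .
\end{equation*}
After the substitution $\sigma=t-\tau$, the first term is bounded by $M\int_0^{t-s}|a_L(\sigma)|\,d\sigma \le M\,\|a_L\|_\infty\,(t-s)$. After the substitution $u=s-\tau$, the second term becomes $\int_0^s\bigl[a_L\bigl((t-s)+u\bigr)-a_L(u)\bigr]h(s-u)\,du$, whose modulus is at most $M\int_{\mathbb{R}}\bigl|a_L\bigl(u+(t-s)\bigr)-a_L(u)\bigr|\,du$; writing $a_L(u+r)-a_L(u)=\int_u^{u+r}a_L'(w)\,dw$ with $r:=t-s$ and using Tonelli gives $\int_{\mathbb{R}}|a_L(u+r)-a_L(u)|\,du \le r\,\|a_L'\|_{\operatorname{L}^1(\mathbb{R})}$. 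Hence $\bigl|I(t)-I(s)\bigr| \le M\bigl(\|a_L\|_\infty+\|a_L'\|_{\operatorname{L}^1(\mathbb{R})}\bigr)\,|t-s|$, so $I$ is Lipschitz on $(0,\infty)$, and therefore $y=g_L+I\in\operatorname{BUC}\bigl((0,\infty),\mathbb{R}\bigr)$. (Alternatively one could invoke continuity of translation in $\operatorname{L}^1$ for $a_L$ in place of the estimate through $a_L'$; this still yields uniform continuity of $I$, but only a weaker modulus.)

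There is no genuine obstacle here; this is precisely the sense in which the statement is easier than its Neumann-boundary-control counterpart \cite[Lemma 4.11]{GM20}, where the associated semigroup does not decay and the analogue of $a_L$ need not be integrable. The only points demanding a little care are the bookkeeping of the moving lower endpoint of the Volterra integral — handled by the two changes of variables above — and confirming that $a_L$ is bounded and $a_L'$ integrable, both of which are immediate from \eqref{aRep} and Remarks \ref{kernelrems}(e).
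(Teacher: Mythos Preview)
Your proof is correct and follows the same decomposition $y=g_L+I$ as the paper. For $g_L$ both arguments appeal to the smoothing of the semigroup (you via Remarks~\ref{kernelrems}(b), the paper via analyticity directly). For the convolution term $I$, the paper takes a slightly shorter route: it observes that $a_L\in\operatorname{L}^1\bigl([0,\infty)\bigr)$ and $f\circ(\beta y)\in\operatorname{L}^\infty$, then cites the standard fact that $\operatorname{L}^1*\operatorname{L}^\infty\subset\operatorname{BUC}$. Your hands-on estimate via $\|a_L\|_\infty$ and $\|a_L'\|_{\operatorname{L}^1}$ is a minor variation that avoids the external citation and gives the stronger Lipschitz modulus; the parenthetical alternative you mention (continuity of translation in $\operatorname{L}^1$ for $a_L$) is precisely the content of the result the paper invokes.
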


\begin{proof}
Since $y$ solves \eqref{vie_L}, we have that

$$
 y(t)=g_L(t)+\int _0^t a_{L}(t-\tau)f\bigl(\beta y(\tau)\bigr)\,
 d\tau,\: t\geq 0,
$$
where $g_L(t)=\bigl( e^{-tA_L}u_0 \bigr)(x_0)$ is the forcing function
induced by $u_0$. It suffices to verify that both terms in the above
sum are uniformly continuous. Uniform continuity of the first term
holds on any finite interval and the derivative of $y$ is bounded for
$t\geq 1$ by the standard smoothing effect of analytic semigroups. 
Hence $g_L$ is uniformly continuous on $(0,\infty)\,$.\\
The second term can be written as a convolution
$$
 \int _0^t a_L(t-\tau) f \bigl(\beta y(\tau)\bigr)\, d\tau=\Bigl[
 a_L*\bigl( f\circ (\beta y) \bigr)\Bigr](t),\: t\geq 0. 
$$
Since $a_L\in \operatorname{L}^1 \bigl( [0,\infty)\bigr)$ and
$f\circ (\beta y) \in \operatorname{L}^\infty \bigl( [0,\infty)\bigr)$ 
by the assumed boundedness of $f$, well-known results on the regularity 
properties of convolutions imply the uniform continuity of the second term 
(see e.g. \cite{Ama95} or \cite{Fo84}). 
\end{proof}
We will now show the following statement: if, for a given fixed choice
of the parameters
$\beta,L, x_0$ and $u_0\in \operatorname{H}^1_L$,  a constant $q>0$
can be determined, such that $R_{\beta,q,L}(t)\leq 0$ along the
solution $y(t)$ of \eqref{vie_L}, then this implies the convergence of
$y(t)$ to zero.\\
By Lemma \ref{ParPlanCriterion} a suitable constant $q>0$ is found if
it verifies the inequality
$$
\hat a_L(\omega)+q\,\widehat{a_L'}(\omega)-\frac{1}{\beta} \leq 0 \text{
  for } \omega\in\mathbb{R}\backslash\{0\}. 
$$ 
If such a $q>0$ can be found, then it does not depend on 
the initial state $u_0\in \operatorname{H}^1_L\,$, since $u_{0}$ does
not appear in the above inequality. However, the choice of such a
suitable $q>0$ may and does depend on the choice of the parameters
$\beta,L, x_0$, as will be discussed below.  
By the relationship between the transfer function $G_L(s)$ and the 
Fourier transform of the kernel $a_L(t)$ discussed in Remarks \ref{kernelrems} 
the search of $q$ can be reinterpreted as the task of finding a
straight line in the complex plane with positive slope $\frac{1}{q}$ that
intersects the real axis at $-\frac{1}{\beta}$ such that the so-called
Popov curve associated with the kernel $a_L$ lies to the right of that
straight line.
We refer to \cite[Section 3]{GM20} for a sketch of this relationship
to feedback control problems and the celebrated Popov criterion. The
following proposition is a slight adaptation of the proof of
\cite[Proposition 4.1]{GM20}.
\begin{prop}\label{Prop_stab_vie}
Fix $\beta,L\in(0,\infty)$ and $x_0\in(0,L)\,$. Let
$y\in\operatorname{BC}\bigl((0,\infty),\mathbb{R}\bigr)$ be a
solution of the integral equation \eqref{vie_L} with parameters
$L,x_0,\beta$ and $u_0\in \operatorname{H}^1_L$.
If for some $q>0$ it holds that  
$$
R_{\beta,q,L}(t)\leq 0 \text{ for } t\geq 0,
$$ 
then 
$$
\lim_{t\to\infty}y(t)=0.
$$
\end{prop}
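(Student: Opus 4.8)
The plan is to combine the Lyapunov-type identity \eqref{Lyap} with the sign hypothesis on $R_{\beta,q,L}$ in order to extract an integrability statement for the nonnegative integrand of $W_1$, and then to run a Barbalat-type argument on the strength of the uniform continuity supplied by Proposition \ref{uniformcont}.

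First I would note that Lemma \ref{VandR} gives $W_{\beta,q}(t)=V_{\beta,q}(t)+R_{\beta,q,L}(t)$ for $t\geq 0$, the hypothesis gives $R_{\beta,q,L}(t)\leq 0$, and Lemma \ref{lembound_V} supplies a constant $c>0$, independent of $t$, with $V_{\beta,q}(t)\leq c$. Since $W_{\beta,q}=W_1+W_2$ with $W_1,W_2\geq 0$ by Lemma \ref{W_nonneg}, these combine to $W_1(y)(t)\leq W_{\beta,q}(t)\leq c$ for all $t\geq 0$. The function $t\mapsto W_1(y)(t)$ is nondecreasing, because its integrand $h(y(\tau)):=f\bigl(\beta y(\tau)\bigr)\bigl[y(\tau)-f\bigl(\beta y(\tau)\bigr)/\beta\bigr]$ is nonnegative by property (iii) of $f$; being nondecreasing and bounded, it has a finite limit, so $\int_0^\infty h(y(\tau))\,d\tau<\infty$.

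Next I would invoke Proposition \ref{uniformcont}, which upgrades the assumed $y\in\operatorname{BC}\bigl((0,\infty),\mathbb{R}\bigr)$ to $y\in\operatorname{BUC}\bigl((0,\infty),\mathbb{R}\bigr)$. Setting $M:=\|y\|_\infty$, the map $h$ is continuous on the compact interval $[-M,M]$, hence uniformly continuous there, so $t\mapsto h(y(t))$ is uniformly continuous on $(0,\infty)$, nonnegative, and integrable by the previous step. Barbalat's lemma then yields $h(y(t))\to 0$ as $t\to\infty$: if instead $h(y(t_n))\geq\varepsilon$ along some sequence $t_n\to\infty$, uniform continuity would force $h(y(\cdot))\geq\varepsilon/2$ on intervals of a fixed positive length around each $t_n$, contradicting finiteness of the integral. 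Finally, by property (iii) one has $h(w)>0$ for $w\neq 0$ and $h(0)=0$, so for each $\varepsilon\in(0,M]$ the quantity $m_\varepsilon:=\min\{h(w):\varepsilon\leq|w|\leq M\}$ is strictly positive by continuity of $h$; hence $h(y(t))<m_\varepsilon$ for all large $t$ forces $|y(t)|<\varepsilon$ for all large $t$, i.e.\ $\lim_{t\to\infty}y(t)=0$.

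The only genuinely delicate point is the Barbalat step, and even that is standard once the uniform continuity of $h\circ y$ (from Proposition \ref{uniformcont}) and the integrability of $h\circ y$ (from the bound on $W_1$) are both in hand; the rest is bookkeeping with the lemmas already proved in this section.
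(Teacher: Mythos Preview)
Your proof is correct and follows essentially the same route as the paper's: bound $W_1$ via $W_1\leq W_{\beta,q}=V_{\beta,q}+R_{\beta,q,L}\leq V_{\beta,q}\leq c$, use uniform continuity of $h\circ y$ from Proposition \ref{uniformcont}, and run a Barbalat-type argument. The paper phrases the final step as a direct contradiction (assuming $y(t)\not\to 0$ produces infinitely many intervals of fixed length where $H\circ y$ is bounded below, forcing $W_1\to\infty$), whereas you first conclude $h(y(t))\to 0$ and then pass to $y(t)\to 0$ via the compactness argument with $m_\varepsilon$; these are the same idea, and your presentation is arguably a touch cleaner.
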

\begin{proof}
By assumption there exists $q>0$ such that
$$
 R_{\beta,q,L}(t)\leq 0\text{ for }t\geq 0.
$$
By Lemma \ref{lembound_V} ther exists $c>0$ such that
$$
 c\geq V_{\beta,q,L}(t)\geq W_{\beta,q}(t)\geq
 W_{1,\beta}(t)\geq 0,
$$
for $t\geq 0$, i.e. such that
$$
 W_{1,\beta}(t)=\int _0^t f\bigl(\beta y(\tau)\bigr)
 \bigl[ y(\tau)-\frac{f \bigl(\beta y(\tau)\bigr)}{\beta}\bigr]
 \, d\tau=:\int _0^t H\bigl( y(\tau)\bigr)\, d\tau\leq c<\infty,
$$
for $t\geq 0$. As shown in \cite[Lemma 4.12]{GM20} the function $H(y)$
is non-negative, only vanishes if $y=0$, and is uniformly
continuous. Assume next by contradiction that $y(t)\not\to 0$ 
as $t\to\infty$. Then, since $H(\xi)>0$ for $0\neq \xi\in \mathbb{R}$,
there is a sequence $(t_m)_{m\in \mathbb{N}}$ in $\mathbb{R}$ with
$t_m\to\infty$ and a constant $\varepsilon>0$ such that
$$
 H \bigl( y(t_m)\bigr)\geq 2 \varepsilon\text{ for all }m\in \mathbb{N}. 
$$
Since $H\circ y\in \operatorname{BUC}\bigl([0,\infty)\bigr)$, a
$\delta>0$ can be found such that
$$
 H \bigl( y(t)\bigr)\geq c\text{ for }t\in [t_m-\delta,t_m+\delta ],
$$
and all $m\in \mathbb{N}$. It follows that
$$
 W_1(t_m)=\int _0^{t_m}H \bigl( y(\tau)\bigr)\, d\tau\geq
 \sum_{k=1}^{m-1}\int _{t_k-\delta}^{t_k+\delta}H \bigl(
 y(\tau)\bigr)\, d\tau\geq (m-1)2 \delta \varepsilon, 
$$
which contradicts the boundedness of $W_1$ on $[0,\infty)$
since $m$ can be chosen arbitrarily large. 
\end{proof}
In order to complete the proof of Theorem \ref{vie_L_result} it
remains to show that, for suitably constructed constants
$\hat{\beta}_1(x_0,L)>0$ and $C(x_0)$, either the assumption
$$
\beta\in\bigl(0,\hat{\beta}_1(x_0,L)\bigr)\text{ and arbitrary }L>x_0>0,
$$ 
or the assumption 
$$
\beta\in\bigl(0,\beta_1(x_0)\bigr)\text{ and }L>C(x_0)>x_0>0,
$$ 
are sufficient to find a $q\equiv q(x_0,\beta)>0$ such that 
$$
\hat a_L(\omega)+q\,\widehat{a_L'}(\omega)-\frac{1}{\beta} \leq 0
\text{ for } \omega\in\mathbb{R}\backslash\{0\}.
$$
We note that, by symmetry, it suffices to verify the above inequality
for $\omega>0$. The following discussion of the limit of the transfer
function $G_{L,x_0}(s)$ for large $L$ is an important element of the
proof.
\begin{rem}\label{Ltoinfinity}
Fix $x_0>0$ and consider the transfer function $G_{L,x_0}(s)$ for
$L\in(x_0,\infty)$. Then 
\begin{equation}\label{LtoinfinityEq}
G_{L,x_0}(s)=\frac{\sinh\bigl(\sqrt{s}(L-x_0)\bigr)}{2\sqrt{s}\cosh(\sqrt{s}L)} 
\to G_{x_0}(s)=\frac{e^{-\sqrt{s}\,x_0}}{2\sqrt{s}} \text{ as } L\to \infty,
\end{equation}
uniformly on compact subsets of $\mathbb{C} \backslash
(-\infty,0]$. The convergence is also uniform for $s$ 
in (unbounded) subsets of the imaginary axis of the form $i\,\bigl(
\mathbb{R}\setminus (-\varepsilon,\varepsilon)\bigr)$.
\end{rem}
\begin{proof}
Note that by expanding $\sinh(z_1+z_2)$ we obtain 
$$
G_{L,x_0}(s)\,=\,\frac{1}{2\sqrt{s}}
\bigl[\cosh(\sqrt{s}x_0)\,\tanh(\sqrt{s}L) - \sinh(\sqrt{s}x_0)\bigr].
$$
Also note that whenever $\operatorname{Re}(\sqrt{s}) > 0
\Leftrightarrow s \in \mathbb{C} \backslash (-\infty,0]$ it holds that
$$
\lim\limits_{L\to\infty} \tanh(\sqrt{s}L) =1,  
$$
and the convergence is uniform on compact subsets of $\mathbb{C}
\backslash (-\infty,0]$. Since $\frac{1}{\sqrt{s}}$ is bounded on such
subsets and $\cosh(\sqrt{s}x_0)-\sinh(\sqrt{s}x_0)=e^{-\sqrt{s}\,x_0}$
the first assertion follows. Observe that we write $\sqrt{z}$
for the principal branch of the complex square root. Hence $\sqrt{\pm
  i}=\frac{1}{\sqrt{2}}(1 \pm i)$ and $\operatorname{Re}(\sqrt{\pm
  i})=\frac{1}{\sqrt{2}} > 0$. Therefore $\tanh(\sqrt{i\omega}L)$
converges to 1 as $L\to\infty$ uniformly for $\omega$ over any set of
the form  $\mathbb{R}\setminus (-\varepsilon,\varepsilon)$. Since
$\frac{1}{\sqrt{s}}$ is bounded on sets of that form the second
assertion follows.
\end{proof}

% Popov Set Definition

In analogy to \cite[Proposition 4.2.]{GM20} we introduce the Popov set
corresponding to a given transfer function. The set contains the frequencies
$\omega$ at which the Popov curve in the complex plane 
$$
\operatorname{Re}[ G_{x_0}(i\omega)] +i\,\omega\, \operatorname{Im}[
G_{x_0}(i\omega)],\:\omega>0, 
$$
intersects the imaginary axis. Describing the structure of the Popov
set will help finding a parameter range for $\beta$ that guarantees
the asymptotic stability of the trivial solution of \eqref{lineTs} for
given parameters $L>x_0>0$.
\begin{deff}\label{Popovsetdefs}  
For $L > x_0 >0$ we set 
$$
 \Omega^{\text{Pop}}_{x_0}:=\Big\{\omega>0 \,\Big |\, \operatorname{Im}
 \Bigl(\:\operatorname{Re}[ G_{x_0}(i\omega)] +i\omega
 \operatorname{Im}[G_{x_0}(i\omega)]\Bigr)=0\Big\}=\big\{\omega>0
 \,\big |\,\operatorname{Im}\bigl[ G_{x_0}(i\omega)\bigr]=0\big\}
$$
and
$$
 \Omega^{\text{Pop}}_{L,x_0}:=\Big\{\omega>0 \,\Big |\,
 \operatorname{Im}\Bigl(\operatorname{Re}[G_{L,x_0}(i\omega)] +
 i\omega\operatorname{Im}[G_{L,x_0}(i\omega)]\Bigr)=0\Big\}
 =\big\{\omega>0 \,\big |\, \operatorname{Im}\bigl[G_{L,x_0}(i\omega)
 \bigr]=0\big\}. 
$$
\end{deff} 
In the next proposition we show that the Popov set of the limiting
transfer function $G_{x_0}$ can be described explicitly.
By determining the first zero $\omega_1 \approx 11.1033$
of $1+\tan(\sqrt{\frac{\omega}{2}}x_0)\,$ for the sensor location
$x_0=1$ we recover the constant $\beta_1\approx 70.3134$, found
above in \eqref{beta1first}, by using the relationship 
\begin{equation}\label{critbetaEq}
1+\beta_1\, G_{L,x_0}(i\omega_1)=0.
\end{equation}
This relationship between the Popov set and the critical
parameter values of $\beta$ is understood by observing that
$$
\operatorname{Im}\bigl[
1+\beta\,G_{L,x_0}(i\omega)\bigr]=\operatorname{Im}\bigl[
G_{L,x_0}(i\omega) \bigr]. 
$$ 
This entails that the locations where the imaginary part of
\eqref{critbetaEq} vanishes are independent of $\beta$. Once the zeros
$\omega$ of the imaginary part of the transfer function $G_{L,x_0}$
are found, one recovers the corresponding critical values for $\beta$
by simply equating the real part to zero, i.e. by solving 
$$
\operatorname{Re}\bigl[ 1+\beta\,G_{L,x_0}(i\omega)\bigr] =0,
$$
for $\beta$. The smallest positive solution arising in the above
procedure is precisely $\beta_1$. This relationship between the Popov
set $\Omega^{\text{Pop}}_{x_0}$ and the corresponding parameter values
for $\beta$ that correspond to the existence of a pair of complex
conjugate eigenvalues of the operator $-A_{\beta}$ lying on the
imaginary axis is also discussed in more detail in the remarks
following \cite[Proposition 4.9]{GM20}.  
\begin{prop}\label{explpopline}
For $x_0>0$, the Popov set of $G_{x_0}$ is given as the solution set
\begin{equation}\label{explpopEq}
 \Omega^{\text{Pop}}_{x_0} = \big\{\omega>0 \,\big |\,
 1+\tan\bigl(x_0\sqrt{\omega/2}\bigr)=0\big\} = \big\{
 \omega_k=\frac{(4k-1)^2 \pi^2}{8x_0^2} \,\big |\,
 k=1,2,3,\dots\big\}.
\end{equation}
Hence $\Omega^{\text{Pop}}_{x_0}$ is an infinite countable set that
consists of positive, non-degenerate (simple) roots of the function
$1+\tan(x_0\sqrt{\omega/2})$.
\end{prop}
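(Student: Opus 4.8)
The plan is to reduce the defining condition $\operatorname{Im}\bigl[G_{x_0}(i\omega)\bigr]=0$ (see Definition \ref{Popovsetdefs}) to an elementary trigonometric equation by a direct evaluation of the limiting transfer function $G_{x_0}(s)=\frac{e^{-\sqrt{s}\,x_0}}{2\sqrt{s}}$ on the positive imaginary axis. First I would use the principal branch of the square root, as in Remark \ref{Ltoinfinity}, so that for $\omega>0$ one has $\sqrt{i\omega}=\sqrt{\omega/2}\,(1+i)$; writing $a:=\sqrt{\omega/2}$, substitution gives
$$
 G_{x_0}(i\omega)=\frac{e^{-ax_0(1+i)}}{2a(1+i)}=\frac{e^{-ax_0}}{4a}\,(1-i)\bigl[\cos(ax_0)-i\sin(ax_0)\bigr].
$$
Expanding the product on the right yields
$$
 \operatorname{Re}\bigl[G_{x_0}(i\omega)\bigr]=\frac{e^{-ax_0}}{4a}\bigl[\cos(ax_0)-\sin(ax_0)\bigr],\qquad
 \operatorname{Im}\bigl[G_{x_0}(i\omega)\bigr]=-\frac{e^{-ax_0}}{4a}\bigl[\cos(ax_0)+\sin(ax_0)\bigr].
$$
Since the prefactor $e^{-ax_0}/(4a)$ is nonzero for every $\omega>0$, the Popov set is exactly the zero set of $\cos(ax_0)+\sin(ax_0)$. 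When this sum vanishes one necessarily has $\cos(ax_0)\neq 0$, so dividing by $\cos(ax_0)$ is legitimate and does not discard solutions; this gives the first equality in \eqref{explpopEq}, namely $\Omega^{\text{Pop}}_{x_0}=\bigl\{\omega>0\ \bigl|\ 1+\tan(x_0\sqrt{\omega/2})=0\bigr\}$.

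For the explicit enumeration I would solve $\tan(x_0\sqrt{\omega/2})=-1$: this forces $x_0\sqrt{\omega/2}=\frac{(4k-1)\pi}{4}$ for some integer $k$, and positivity of the left-hand side restricts $k$ to $k\geq 1$. Squaring and rearranging then produces $\omega_k=\frac{(4k-1)^2\pi^2}{8x_0^2}$ for $k=1,2,3,\dots$, which is strictly increasing, whence $\Omega^{\text{Pop}}_{x_0}$ is countably infinite. Simplicity (non-degeneracy) of each root follows from
$$
 \frac{d}{d\omega}\Bigl[1+\tan\bigl(x_0\sqrt{\omega/2}\bigr)\Bigr]=\sec^2\!\bigl(x_0\sqrt{\omega/2}\bigr)\cdot\frac{x_0}{2\sqrt{2\omega}},
$$
which is strictly positive at every $\omega_k>0$ (note $x_0\sqrt{\omega_k/2}=\frac{(4k-1)\pi}{4}$ is never an odd multiple of $\frac{\pi}{2}$, so $\sec^2$ is finite there), so each $\omega_k$ is a transversal zero of $1+\tan\bigl(x_0\sqrt{\cdot/2}\bigr)$.

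There is no substantive obstacle here — the statement is a one-variable computation — so the only care needed is bookkeeping: keeping the principal branch of $\sqrt{\cdot}$ consistent (so that $\operatorname{Re}\sqrt{i\omega}>0$ and the signs in the real and imaginary parts come out correctly) and observing, as above, that passing from $\cos(ax_0)+\sin(ax_0)=0$ to $1+\tan(ax_0)=0$ is an equivalence rather than just an implication. For completeness, and to connect with the earlier results, I would also record that $1+\beta\,G_{x_0}(s)=\frac{2\sqrt{s}+\beta e^{-\sqrt{s}x_0}}{2\sqrt{s}}$, so $\Omega^{\text{Pop}}_{x_0}$ is precisely the set of frequencies $\omega>0$ at which $i\omega$ solves \eqref{poleEq}; evaluating $\operatorname{Re}\bigl[1+\beta\,G_{x_0}(i\omega_1)\bigr]=0$ at the first frequency $\omega_1=\frac{9\pi^2}{8x_0^2}$ (where $x_0\sqrt{\omega_1/2}=\frac{3\pi}{4}$) recovers $\beta_1=\frac{3\pi}{\sqrt{2}}e^{3\pi/4}$, in agreement with \eqref{beta1first}.
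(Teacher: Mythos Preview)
Your proof is correct and follows essentially the same route as the paper's: a direct computation of $G_{x_0}(i\omega)$ via the principal square root $\sqrt{i\omega}=\sqrt{\omega/2}\,(1+i)$, reduction of $\operatorname{Im}\bigl[G_{x_0}(i\omega)\bigr]=0$ to $\cos(ax_0)+\sin(ax_0)=0$, and passage to $1+\tan(ax_0)=0$. Your write-up is in fact more thorough than the paper's own proof, which leaves the explicit enumeration of the $\omega_k$ and the simplicity check to ``elementary properties of $\tan$''.
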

\begin{proof}
Setting $r:=-x_0\sqrt{\omega}<0$ we observe that
$$
\operatorname{Im}[ \, G_{x_0}(i\omega)\:\bigr]=0  \Leftrightarrow
\operatorname{Im}[\,\frac{e^{r\,\sqrt{i}}}{\sqrt{i}}\:\bigr]=0,
$$
and since it holds that
$e^{r\sqrt{i}}=e^{r\alpha}\cos(r\alpha)+ie^{r\alpha}\sin(r\alpha)$, with
$\alpha:=\sin(\frac{\pi}{4})=\cos(\frac{\pi}{4})=\frac{1}{\sqrt{2}}$, 
we find that
$$
 \operatorname{Im}\Bigl[ \frac{e^{r\sqrt{i}}}{\sqrt{i}}\Bigr]=0
 \Longleftrightarrow\operatorname{Im}\Bigl[\overline{\sqrt{i}}\cos(r\alpha)
 +\sqrt{i}\sin(r\alpha)\Bigr]=0\Longleftrightarrow\cos(r\alpha)=\sin(r\alpha),  
$$
and thus, for $\omega>0$, the assertion
$$
 \operatorname{Im}[ G_{x_0}(i\omega)\:\bigr]=0  \, \Longleftrightarrow
 1+\tan\bigl(x_0\sqrt{\omega/2}\bigr) =0
$$
follows. The other statements follow from elementary properties of $\tan(x)\,$.
\end{proof}
We note that the Popov set $\Omega^{\text{Pop}}_{x_0}$ also contains values 
$\omega_{k}$ that lead to positive values of $G_{x_0}(i\omega_k)$ and
thus to negative critical values $\beta(\omega_{k})$. More precisely,
if we set 
$$
\Omega^{\text{Pop}\,+}_{x_0} :=
\big\{\omega\in\Omega^{\text{Pop}}_{x_0} \,\big |\, G_{x_0}(i\omega)
<0\big\}\text{ and } \Omega^{\text{Pop}\,-}_{x_0} :=
\big\{\omega\in\Omega^{\text{Pop}}_{x_0} \,\big |\, G_{x_0}(i\omega)
>0\big\},
$$
then
$$
\Omega^{\text{Pop}\,+}_{x_0} = \big\{\omega_k=\frac{(4k-1)^2
  \pi^2}{8x_0^2} \,\big |\, k\in 2\mathbb{N}-1\big\}\text{ and } 
\Omega^{\text{Pop}\,-}_{x_0}
=\big\{\omega_k=\frac{(4k-1)^2\pi^2}{8x_0^2}\,\big |\, k\in 2
\mathbb{N}\big\},,
$$
where $\mathbb{N}:=\{1,2,3,...\}\,$. This is analogous to the discussion in \cite{GM20} and in Section \ref{SecLinear} and also captures that
pairs of conjugate complex eigenvalues in the spectrum of $A_{\beta}$ do cross the imaginary axis for certain negative values of
$\beta$, which are determined by
$$
 -\frac{1}{G_{x_0}(i\omega)} \text{ for }
 \omega\in\Omega^{\text{Pop}\,-}_{x_0}. 
$$
The positive values of $\beta$ where a crossing occurs are found by
$$
-\frac{1}{G_{x_0}(i\omega)} \text{ for }
\omega\in\Omega^{\text{Pop}\,+}_{x_0}.
$$ 
Thus the Popov set
$\Omega^{\text{Pop}}_{x_0}=\Omega^{\text{Pop}\,+}_{x_0} \,\cup\
\Omega^{\text{Pop}\,-}_{x_0}$ captures
both the positive and negative values of $\beta$ where complex
conjugate eigenvalue pairs of $A_{\beta}$ cross the imaginary axis.  
By the positivity of the semigroup for $\beta\leq 0$ the stability of
the trivial solution is determined by the (real) principal eigenvalue
and not by a Hopf bifurcation induced by a complex conjugate pair of
eigenvalues first crossing into the unstable complex half plane. 
In that sense, for negative values of $\beta$, the problem has
positivity properties that lead to a more familiar behavior which is 
well-studied in the context of semilinear parabolic equations. We also
refer to \cite{GM99} for a discussion of positivity aspects.

%%%%%%%%%%%%%%%%%%%%%%%%%%%%%%%%%%%
% Popov Criterion
%%%%%%%%%%%%%%%%%%%%%%%%%%%%%%%%%%%
\subsection{The Popov criterion in the Limit $L=\infty$}
Next we show that the stability criterion \eqref{P} can be verified
for the limiting transfer function $G_{x_0}(s)$.
\begin{prop}\label{PopCritLim}
For any $x_0>0$ there exists $\beta_1(x_0)=\frac{c_\pi}{x_0} > 0$ for
$c_\pi=\frac{3\pi}{\sqrt{2}}e^{\frac{3\pi}{4}}$, such that, for $\beta
\in \bigl(0,\beta_1(x_0)\bigr)$, there exists $q(x_0)>0$ which
satisfies the Popov criterion, i.e. such that the inequality
\begin{equation}\label{PLim}
  \operatorname{Re}\bigl[ G_{x_0}(i \omega)\bigr]-q(x_0) \omega 
  \operatorname{Im}\bigl[ G_{x_0}(i \omega)\bigr] \geq -\frac{1}{\beta} 
\end{equation}
holds for all $\omega\in \mathbb{R}\setminus\{0\}$. 
\end{prop}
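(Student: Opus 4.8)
The plan is to make the Popov inequality \eqref{PLim} completely explicit, scale the parameters $x_0$ and $\beta$ out of it, and reduce to a single scalar inequality whose only equality case is pinned at $\theta=\tfrac{3\pi}{4}$. First I would use $\sqrt{i\omega}=\sqrt{\omega/2}\,(1+i)$ for $\omega>0$ in $G_{x_0}(s)=\dfrac{e^{-\sqrt{s}\,x_0}}{2\sqrt{s}}$ (from Remark \ref{Ltoinfinity}) to obtain, with the abbreviation $\theta:=x_0\sqrt{\omega/2}\in(0,\infty)$,
$$
 \operatorname{Re}\bigl[G_{x_0}(i\omega)\bigr]=\frac{x_0\,e^{-\theta}}{4\theta}\bigl(\cos\theta-\sin\theta\bigr),\qquad
 \omega\operatorname{Im}\bigl[G_{x_0}(i\omega)\bigr]=-\frac{\theta\,e^{-\theta}}{2x_0}\bigl(\cos\theta+\sin\theta\bigr).
$$
With the ansatz $q(x_0):=c\,x_0^2$ for a constant $c>0$ to be fixed, the left-hand side of \eqref{PLim} becomes $x_0\,\psi_c(\theta)$ where
$$
 \psi_c(\theta):=e^{-\theta}\Bigl[\frac{\cos\theta-\sin\theta}{4\theta}+\frac{c\,\theta\,(\cos\theta+\sin\theta)}{2}\Bigr],\qquad\theta>0,
$$
a function involving neither $x_0$ nor $\beta$. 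Since $\beta<\beta_1(x_0)=c_\pi/x_0$ gives $-\tfrac1\beta<-\tfrac{x_0}{c_\pi}$, it suffices to exhibit $c>0$ with $\psi_c(\theta)\ge-\tfrac1{c_\pi}$ for all $\theta>0$; then $x_0\psi_c(\theta)\ge-\tfrac{x_0}{c_\pi}>-\tfrac1\beta$, which is \eqref{PLim}.

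The crucial observation is a rigidity at $\theta_*:=\tfrac{3\pi}{4}$: there $\cos\theta_*+\sin\theta_*=0$ and $\cos\theta_*-\sin\theta_*=-\sqrt2$, so $\psi_c(\theta_*)=-\tfrac{\sqrt2}{3\pi}e^{-3\pi/4}=-\tfrac1{c_\pi}$ for \emph{every} $c$. Hence $-\tfrac1{c_\pi}$ is the only possible infimum, and the desired inequality is equivalent to $\theta_*$ being a global minimiser of $\psi_c$. In particular $\psi_c'(\theta_*)=0$ is necessary (otherwise $\psi_c$ drops below $-\tfrac1{c_\pi}$ immediately on one side of $\theta_*$); since $\psi_c'(\theta_*)$ is an affine function of $c$ with non-zero coefficient, this forces a unique value $c=c_0>0$ (an elementary computation gives $c_0=\tfrac{8(4+3\pi)}{27\pi^3}$). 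I would then verify $\psi_{c_0}''(\theta_*)>0$, so that $\theta_*$ is a strict local minimum of $\psi:=\psi_{c_0}$ with value $-\tfrac1{c_\pi}$, and set $q(x_0):=c_0\,x_0^2$.

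It remains to upgrade local minimality to a global statement for $\psi$, which I expect to be the main obstacle, precisely because $\beta_1=\tfrac{3\pi}{\sqrt2}e^{3\pi/4}$ is the sharp Popov threshold and the bound $\psi\ge-\tfrac1{c_\pi}$ is attained with no slack (numerically, $\psi$ merely grazes $-\tfrac1{c_\pi}$ at $\theta_*$). I would split $(0,\infty)$ into three ranges. On $(0,\tfrac\pi4]$ both of $\cos\theta\mp\sin\theta$ are positive, so $\psi>0$ there. For large $\theta$ one has $|\psi(\theta)|\le e^{-\theta}\bigl(\tfrac{\sqrt2}{4\theta}+\tfrac{c_0\sqrt2}{2}\theta\bigr)$, which falls below $\tfrac1{c_\pi}$ past an explicit $\Theta$; in particular at the points $\theta=\theta_*+k\pi$, $k\ge1$, where the $c_0$-term vanishes, $\psi(\theta)=\pm\tfrac{\sqrt2}{4\theta}e^{-\theta}$, the negative values (at $\theta=\tfrac{11\pi}{4},\tfrac{19\pi}{4},\dots$) having magnitude at most $\tfrac{\sqrt2}{11\pi}e^{-11\pi/4}\ll\tfrac1{c_\pi}$.

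The genuine work is the remaining compact window $[\tfrac\pi4,\Theta]$, on which I would use the rewriting
$$
 \psi(\theta)=\frac{\sqrt2\,e^{-\theta}}{4}\Bigl[\frac{\cos(\theta+\pi/4)}{\theta}-c_0\,\theta\,\sin(\theta+\pi/4)\Bigr],
$$
coming from $\cos\theta-\sin\theta=\sqrt2\cos(\theta+\tfrac\pi4)$ and $\cos\theta+\sin\theta=-\sqrt2\sin(\theta+\tfrac\pi4)$. The candidate minima cluster near $\theta+\tfrac\pi4\in\pi+2\pi\mathbb Z$, i.e.\ near $\theta_*+2\pi\mathbb Z$; since $c_0$ is small the decreasing envelope $e^{-\theta}/\theta$ controls the sign and size of $\psi$ at those points, making the dip at $k=0$ (at $\theta_*$) the deepest, while between consecutive such points the $\sin$-contribution is dominated using $|\sin|\le1$ together with the envelope bound above, keeping $\psi$ above $-\tfrac1{c_\pi}$. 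Because of the absence of slack this last step requires careful quantitative estimates rather than soft arguments. Assembling the three ranges yields $\psi(\theta)\ge-\tfrac1{c_\pi}$ for all $\theta>0$, whence $\operatorname{Re}[G_{x_0}(i\omega)]-q(x_0)\omega\operatorname{Im}[G_{x_0}(i\omega)]=x_0\psi(\theta)\ge-\tfrac{x_0}{c_\pi}>-\tfrac1\beta$, i.e.\ \eqref{PLim}, for all $\omega\in\mathbb R\setminus\{0\}$.
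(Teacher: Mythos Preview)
Your approach is essentially the same as the paper's: both compute $\operatorname{Re}[G_{x_0}(i\omega)]$ and $\omega\operatorname{Im}[G_{x_0}(i\omega)]$ explicitly, identify the critical point at $\theta_*=x_0\sqrt{\omega_1/2}=\tfrac{3\pi}{4}$, fix $q(x_0)$ by the tangency condition there (your $c_0=\tfrac{8(4+3\pi)}{27\pi^3}$ is exactly the paper's $q(x_0)=x_0^2/d_\pi$), verify the second-derivative sign, and then argue that this local extremum is global; the paper's final step (reducing to the critical-point equation $\tan y=T(y)$ and checking an explicit inequality ``by plotting or by an analytic discussion'') is no more rigorous than your envelope argument, so the two proofs share the same honest gap. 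One slip: $\cos\theta+\sin\theta=\sqrt{2}\sin(\theta+\tfrac\pi4)$ with a plus sign, so your rewritten $\psi$ should read $\tfrac{\sqrt{2}e^{-\theta}}{4}\bigl[\tfrac{\cos(\theta+\pi/4)}{\theta}+2c_0\,\theta\sin(\theta+\tfrac\pi4)\bigr]$.
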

\begin{proof}
It is sufficient to show that the Popov curve parametrized as
$$
\Gamma_{x_0}(\omega):= \bigl( x(\omega),y(\omega)\bigr):=
\Bigl( \operatorname{Re}\bigl( G_{x_0}(i \omega)\bigr),\:
\omega\,\operatorname{Im}\bigl( G_{x_0}(i \omega)\bigr)\Bigr)
$$
lies in the half-plane
$$ 
 H_{q,\beta}:=\big\{  (x,y)\in\mathbb{R}^2 \,\big |
 \,F_{q,\beta}(x,y)\leq 0 \big\}
$$
that is defined by the functional 
$$
F_{q,\beta}(x,y):=y-\frac{1}{q}\,x-\frac{1}{q\,\beta}
$$
for given $q,\beta>0$. Thus verifying the Popov criterion \eqref{PLim}
is equivalent to showing that
\begin{equation}\label{FuncIneq}
F_{q,\beta}\bigl(\Gamma_{x_0}(\omega)\bigr)\leq 0,\:\omega>0.
\end{equation}
A somewhat tedious but elementary computation yields
\begin{align}\label{ExplF}
F_{q,\beta}\bigl(\Gamma_{x_0}(\omega)\bigr)=
&-\frac{\alpha\sqrt{\omega}}{2} e^{-\alpha x_{0}\sqrt{\omega}} \: 
\left[\sin(\alpha x_{0}\sqrt{\omega})+\cos(\alpha
x_{0}\sqrt{\omega})\right]\\ 
&-\frac{\alpha}{2q\sqrt{\omega}} e^{-\alpha x_{0}\sqrt{\omega}} \,
\left[\cos(\alpha x_{0}\sqrt{\omega})-\sin(\alpha
x_{0}\sqrt{\omega})\right] -\frac{1}{q\beta}\:, \notag
\end{align}
where $\alpha:=\frac{1}{\sqrt{2}}\,$. Next, for each $x_0>0$, we fix
$\beta_1(x_0)$ as follows 
$$
\beta_1(x_0):=-\frac{1}{\operatorname{Re}\bigl(
  G_{x_0}(i\omega_1)\bigr)},
$$
where, using \eqref{explpopEq}, we can express $\omega_1$ explicitly as
a function of $x_0$ as
$$
\omega_1(x_0):=\min \Omega^{\text{Pop}}_{x_0}=
\frac{2r_1^2}{x_0^2},\:r_1:=\arctan(-1)+\pi=\frac{3}{4}\pi\;, 
$$
i.e.
\begin{equation}\label{omega1explicitinfty}
\omega_1(x_0)=\frac{b_\pi}{x_0^2}\;,\;b_\pi:=\frac{9 \pi^2}{8}\;. 
\end{equation}
We also note that inserting the explicit expression for
$\omega_1(x_0)$ into $G_{x_0}$ we easily obtain
$$
\beta_1(x_0)=\frac{c_\pi}{x_0}\;,\;c_\pi=\frac{3\pi}{\sqrt{2}}\;e^{\frac{3\pi}{4}}.
$$ 
By the definition of $F_{q,\beta}\,$, $\Gamma_{x_0}$ and
$\omega_1(x_0)$ or by a simple direct verification using \eqref{ExplF},
it follows that 
$$
F_{q,\beta_1(x_0)}\bigl( \Gamma_{x_0}(\omega_1(x_0)\bigr)=0,\: q>0,
$$
holds for any $x_0>0$. Given $x_0$, we set $\frac{1}{q(x_0)}$ to be
the slope of the curve $\Gamma_{x_0}(\omega)$ at  
the point where it intersects the real axis for $\omega=\omega_1(x_0)\,$. 
The first such intersection occurs for the parameter value
$\omega=\omega_1(x_0)\,$. In other words, we use the parametrization
$\Gamma_{x_0}(\omega)=\bigl( x(\omega),y(\omega)\bigr)$ of the curve
to define   
$$
\frac{1}{q(x_0)}:=\frac{\dot{y}}{\dot{x}} \Big | _{\omega=\omega_1(x_0)}\,.
$$
In order to find an explicit formula for $\frac{1}{q(x_0)}$, we first
rewrite the coordinates $x(\omega)$ and $y(\omega)$ as follows
$$
x(\omega)= \operatorname{Re}\bigl( G_{x_0}(i \omega)\bigr)=
\frac{\alpha}{2\sqrt{\omega}}e^{-x_0 \alpha\sqrt{\omega}}\Bigl[
\cos\bigl( x_{0}\alpha\sqrt{\omega}\bigr)-\sin\bigl(
x_{0}\alpha\sqrt{\omega}\bigr)\Bigr]
$$
and 
$$
y(\omega)= \omega\operatorname{Im}\bigl( G_{x_0}(i \omega)\bigr)=
-\frac{\alpha \sqrt{\omega}}{2}e^{-x_0 \alpha\sqrt{\omega}}\Bigl[
\cos\bigl( x_{0}\alpha\sqrt{\omega}\bigr)+\sin\bigl(
x_{0}\alpha\sqrt{\omega}\bigr)\Bigr].
$$
When differentiating and evaluating these expressions at
$\omega=\omega_1(x_0)$ in order to compute $\dot{x}$
and $\dot{y}\,$, we use that
$$
\sin\bigl( x_{0}\alpha \sqrt{\omega_{1}(x_0)}\bigr) + \cos\bigl( x_{0} \alpha
\sqrt{\omega_{1}(x_0)}\bigr)=0 
$$ 
and that
$$
\frac{d}{d\omega}\Big | _{\omega=\omega_1(x_0)} \bigl[ \sin\bigl(
x_{0}\alpha\sqrt{\omega}\bigr)-\cos\bigl(
x_{0}\alpha\sqrt{\omega}\bigr) \bigr]
=0\,. 
$$
An elementary calculation then shows that
\begin{equation}\label{Explq}
  \frac{1}{q(x_0)}=\frac{2r_1^2}{x_0^2(1+\frac{1}{r_1})}=
  \frac{d_\pi}{x_0^2}\;,\;d_\pi:=\frac{9\pi^3}{8\pi+\frac{32}{3}}\,. 
\end{equation}
To complete the proof we need to show that for arbitrary $x_{0}>0$ the
inequality 
$$
F_{q(x_{0}),\beta}\big( \Gamma_{x_0}(\omega)\bigr) \leq 0
$$
holds for $\omega>0\,$. To that end, note that, for $\beta\in\bigl(
0,\beta_1(x_0)\bigr)$, it clearly holds that $-\frac{1}{\beta} <
-\frac{1}{\beta_{1}(x_0)}$ and, therefore, making use of
\eqref{ExplF}, we see that 
$$
F_{q(x_{0}),\beta}\bigl(\Gamma_{x_0}(\omega)\bigr)<F_{q(x_{0}),\beta_{1}(x_0)}\bigl(
\Gamma_{x_0}(\omega)\bigr), \:\omega\in(0,\infty). 
$$
Hence it remains to prove that the inequality
$$
F_{q(x_{0}),\beta_{1}(x_0)}\bigl(\Gamma_{x_0}(\omega)\bigr)\leq 0
$$
is satisfied for $\omega\in(0,\infty)$. To do so, we
first use \eqref{Explq} to get 
$$
\frac{d}{d\omega}\Big | _{\omega=\omega_1(x_0)}
F_{q(x_{0}),\beta_{1}(x_0)}\bigl( \Gamma_{x_0}(\omega)\bigr)=0, 
$$
and
$$
\frac{d^{2}}{d^{2}\omega} \Big | _{\omega=\omega_1(x_0)}
F_{q(x_{0}),\beta_{1}(x_0)}\bigl( \Gamma_{x_0}(\omega)\bigr)<0. 
$$
From \eqref{ExplF} we also see immediately that 
$$
\lim_{\omega\to 0}F_{q(x_{0}),\beta_{1}(x_0)}\bigl(\Gamma_{x_0}(\omega)\bigr)=-\infty
$$ 
and
$$
\lim_{\omega\to\infty}F_{q(x_{0}),\beta_{1}(x_0)}\bigl(\Gamma_{x_0}(\omega)\bigr)
=-\frac{1}{q(x_0)\,\beta_1(x_0)}< 0. 
$$
This shows that $F_{q(x_{0}),\beta_{1}(x_0)}\bigl(
\Gamma_{x_0}(\omega)\bigr)$ achieves its maximum in the interior of a
compact subset of $(0,\infty)$. Now we prove the assertion by showing that 
$$
\max_{\omega>0} F_{q(x_{0}),\beta_{1}(x_0)}\bigl( \Gamma_{x_0}(\omega)\bigr)\leq 0\;.
$$
We do this by verifying that $F_{q(x_{0}),\beta_{1}(x_0)}\bigl(
\Gamma_{x_0}(\omega)\bigr)$ is nonpositive in any of its critical
points. In other words, we show that for any $\widehat\omega>0$ where 
$$
\frac{d}{d\omega}\Big |_{\omega=\widehat\omega}F_{q(x_{0}),\beta_{1}(x_0)}\bigl(
\Gamma_{x_0}(\omega)\bigr)=0,
$$
it holds that
$F_{q(x_{0}),\beta_{1}(x_0)}\bigl(\Gamma_{x_0}(\widehat\omega)\bigr)\leq0$.
An elementary differentiation shows that a critical point $\widehat\omega$
needs to be a solution of
\begin{equation}\label{CritOmegaCond}
  \tan\bigl({y(\widehat\omega)}\bigr)=T\bigl(y(\widehat\omega)\bigr),\:
  y(\widehat\omega):=x_0\sqrt{\widehat\omega/2}, 
\end{equation}  
where, for $y>0$, the function $T$ is given by 
\begin{equation}\label{T_fun}
T(y):= \frac{y^2 - d_\pi y -\frac{d_\pi}{2}}{2y^3 - y^2 -\frac{d_\pi}{2}}.
\end{equation}
The cubic polynomial in the denominator of $T$ has only one real root
$y_s\approx 1.4399094$, which generates a real pole of $T$.  
A discussion of the graph of $T(y)$ for $y\in(0,y_s)$ and
$y\in(y_s,\infty)$ and the fact that $y_s<\pi/2$ yield that all
positive solutions of $\tan(y)=T(y)$ satisfy $y>y_s$. 
Hence any critical point $\widehat\omega>0$ of
$F_{q(x_{0}),\beta_{1}(x_0)}\bigl( \Gamma_{x_0}(\omega)\bigr)$
enjoys the relationship 
$$
\sin\bigl( y(\widehat\omega) \bigr) =
T\bigl( y(\widehat\omega) \bigr)\cos\bigl( y(\widehat\omega)
\bigr)\,\text{ and } y(\widehat\omega)>y_s.  
$$
Inserting this into the expression \eqref{ExplF}, the verification
that $F_{q(x_{0}),\beta_{1}(x_0)}\bigl(
\Gamma_{x_0}(\widehat\omega)\bigr)\leq 0$ is easily seen to be equivalent
to the verification that, for $y>y_s$,
$$
2\cos(y)y^2\bigl[ 1+T(y) \bigr]+d_\pi\cos(y)\bigl[ 1-T(y) \bigr]+
\frac{d_\pi}{c_\pi}ye^y\geq 0.
$$
This follows by plotting this function or by an analytic discussion
using the fact that the expression in the left-hand side of the above
inequality vanishes at
$y=\sqrt{\omega_1(1)/2}=\sqrt{b_\pi/2}=\frac{3\pi}{4}\,$. 
\end{proof} 

\subsection{The Popov criterion for the Dirichlet problem}
In contrast to the case for $L=\infty$ just discussed in Proposition
\ref{PopCritLim}, a direct rigorous verification of the Popov criterion for the
transfer function $G_{L,x_0}$ associated with the Dirichlet problem is
more involved analytically. To simplify the discussion in the case of
Dirichlet boundary conditions for a fixed but arbitrary $L>0$, we
may assume without loss of generality that $L=1$ by rescaling units of
length. For notational convenience we parametrize the location of
$x_0$ as 
$$
x_0(\delta):= 1-\delta,\text{ for } \delta\in(0,1).
$$
This leads to considering the one-parameter family of transfer functions 
$$
  G_{\delta}(s):=G_{L=1,x_0(\delta)}(s)=\frac{\sinh(\delta\sqrt{s})}{2\sqrt{s}
  \cosh(\sqrt{s})},\:\delta\in(0,1),
$$ 
and its associated one-parameter family of Popov curves 
$$
 \Gamma_{\delta}(\omega):= \bigl( x(\omega),y(\omega)\bigr)= 
 \Bigl( \operatorname{Re}\bigl[ G_{\delta}(i \omega)\bigr],\:
 \omega\,\operatorname{Im}\bigl[ G_{\delta}(i \omega)\bigr]\Bigr),\:
 \omega\in(0,\infty)\,,\, \delta\in(0,1). 
$$
In order to more conveniently deal with the limits as $\delta \to 0$
and as $\delta \to 1$ we consider $\widetilde{G}_\delta:=G_{\delta}/\delta$,
so that
$$
 \widetilde{G}_0(s)=\lim_{\delta\to 0}\widetilde{G}_\delta(s)
 =\frac{1}{2\cosh(\sqrt{s})}
%\text{ for }\delta\simeq 0\,  \text{... this is not needed now anymore, agree?} ,
$$
and
$$
 \widetilde{G}_{1}(s)=\lim_{\delta\to 1}\widetilde{G}_\delta(s)
 =\frac{\tanh(\sqrt{s})}{2\sqrt{s}}=G_1(s).
$$
Also note that, since 
$$
\lim_{\omega\to 0}
 \frac{\sinh(\sqrt{i\omega})}{\sqrt{i\omega}\cosh(\sqrt{i\omega})}
 =\lim_{\omega\to 0}\frac{1}{\cosh(\sqrt{i\omega})}=\lim_{\omega\to 0}
 \frac{\tanh(\sqrt{i\omega})}{\sqrt{i\omega}}=1,
$$ 
we find that
$$
 \lim_{\omega\to 0} {\widetilde{G}}_{\delta}(i\omega)=
 \frac{1}{2}\; \text{ and }\lim_{\omega\to 0} {G}_{1}(i\omega)=\frac{1}{2}\;.
$$
We denote the corresponding asymptotic (rescaled for $\delta\to 0$)
Popov curves accordingly by $\widetilde{\Gamma}^{0}(\omega)$ and $\Gamma^{1}(\omega)\,$. 
Without giving a proof, we note that
$$
\tilde \Gamma _\delta(\omega) =\Gamma_\delta(\omega)/\delta\longrightarrow
\widetilde{\Gamma}^0\text{ as }\delta\to 0 \text{ uniformly in } [0,\infty)  
$$
and
$$
\Gamma_\delta(\omega) \to \Gamma^1(\omega)\text{ as }
\delta\to 1\text{ uniformly in intervals of the form }(0,M)\,.
$$
Similarly as in the proof of Proposition \ref{PopCritLim} for the case
$L=\infty$, the relevant parameters that determine the stability and
bifurcation properties associated with $\Gamma_{\delta}(\omega)$ for
$\delta \simeq 0$ can be determined explicitly by studying the
corresponding properties of its rescaled limit
$\widetilde{\Gamma}^0(\omega)$.

\begin{rem}\label{explicit_small_delta}
The Popov set of $\widetilde{\Gamma}^0(\omega)$ is given by  
$$
 \widetilde{\Omega}^{\text{Pop}}_0:=\Big\{\omega>0 \,\Big |\,
 \operatorname{Im}\bigl( \widetilde{\Gamma}^0(\omega)
 \bigr)=0\Big\}=\big\{\omega_k=2k^2\pi^2 \,\big |\,
 k=1,2,\dots\big\}.
$$
In particular, the first intersection of $\widetilde{\Gamma}^0$
with the real axis occurs at the frequency
$$ 
\omega_1^0:= \min \widetilde{\Omega}^{\text{Pop}}_0=2\pi^2,
$$
and the corresponding period is given by
$T_1^0:=\frac{2\pi}{\omega_1^0}=\frac{1}{\pi}$. We obtain the
critical parameter for $G_{\delta}$ in the limit as $\delta\to 0$ from
the value of $\operatorname{Re}\Bigl(\widetilde{\Gamma}^0\bigl(
\omega_1^0\bigr)\Bigr) =\operatorname{Re}\Bigl(
\widetilde{G}_0(i\omega_1^0)\Bigr)$, i.e. 
$$ 
 \beta_1^0:=-\frac{1}{\delta \widetilde{G}_0(i\omega_1^0)}
 =\frac{e^{\pi}+e^{-\pi}}{\delta}. 
 $$
%{\em This needs to be formulated properly. If you take the limit as
% $\delta\to 0$, you can't still have $\delta$ floating around.}
Finally, the slope of $\widetilde{\Gamma}^0$ at its first
intersection point with the real axis, which occurs at
$\omega=\omega_1^0$, can be determined explicitly. In fact, using the
parametrization $\widetilde{\Gamma}^0=\bigl(
x(\omega),y(\omega)\bigr)$ it holds that
$$
\frac{1}{q_{1}^0}=\frac{\dot{y}}{\dot{x}} \Big |
_{\omega=\omega_1^0} = 2\pi^2. 
$$
\end{rem}
\begin{proof}
The proof follows from somewhat lengthy but elementary calculations that begin
with splitting the function $\frac{1}{\cosh(\sqrt{i\omega})}$ into its real and
imaginary part. 
\end{proof}
The verification of the Popov criterion for given parameter values
$\beta>0$ and $\delta\in(0,1)$ can be interpreted geometrically. It
amounts to showing that it is possible to choose a straight line in the complex plane with
positive slope such that it intersects the real axis at $-\frac{1}{\beta}$ and such that 
the entire Popov curve lies to the right of that straight line. In our case,
the choice of a tangent to the Popov curve at its most negative
intersection point with the negative real axis is a possible choice of such a straight line. 
The choice of the tangent, as a particular separating straight line, corresponds to the critical parameter 
$\beta_1$ at which a change of stability takes place and this choice leads to a ``maximal'' interval of stability $(0,\beta_1)\,$.  
In applied problems, e.g. in electrical engineering, the verification of the Popov
stability criterion is often simply reduced to plotting the Popov curve and to checking
whether such a tangential (optimal) line, or any separating line, can
be fitted into the Popov plot. 
% {\color{red}{\em I
%   changed this sentence but I still don't like, especially the
%   closure, for grammatical reasons and also because I am NOT sure its
%   meaning is clear.}} 
% {\color{blue}{\em Do you find it better now? Else we could look for a reference to electrical engineering 
% text books where the applied approach is explained. }}\\
In Figure \ref{PopovPlots} we plot the rescaled Popov curves
$\widetilde{\Gamma}_{\delta}(\omega)$ for different choices of $\delta$. 
The two asymptotes $\Gamma^{1}(\omega)$, which is confined to
the right complex halfplane, and $\widetilde{\Gamma}^0(\omega)$, 
which orginates at $(\frac{1}{2},0)$ and spirals to the origin as $\omega\to \infty\,$, 
are both depicted as dotted lines.\\
\begin{figure}[H]
    \centering
    {\includegraphics[scale=.5]{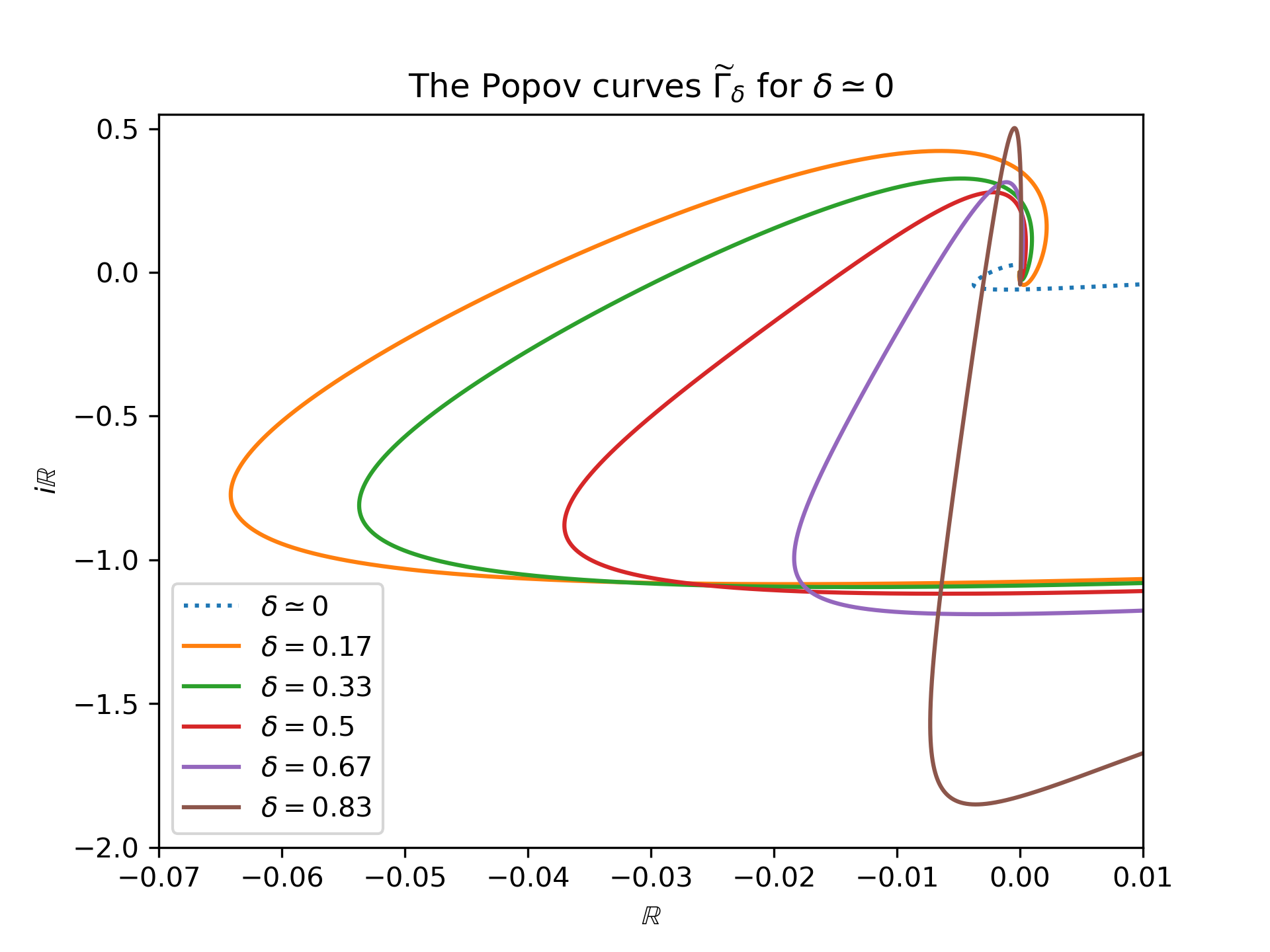} }%
    {\includegraphics[scale=.5]{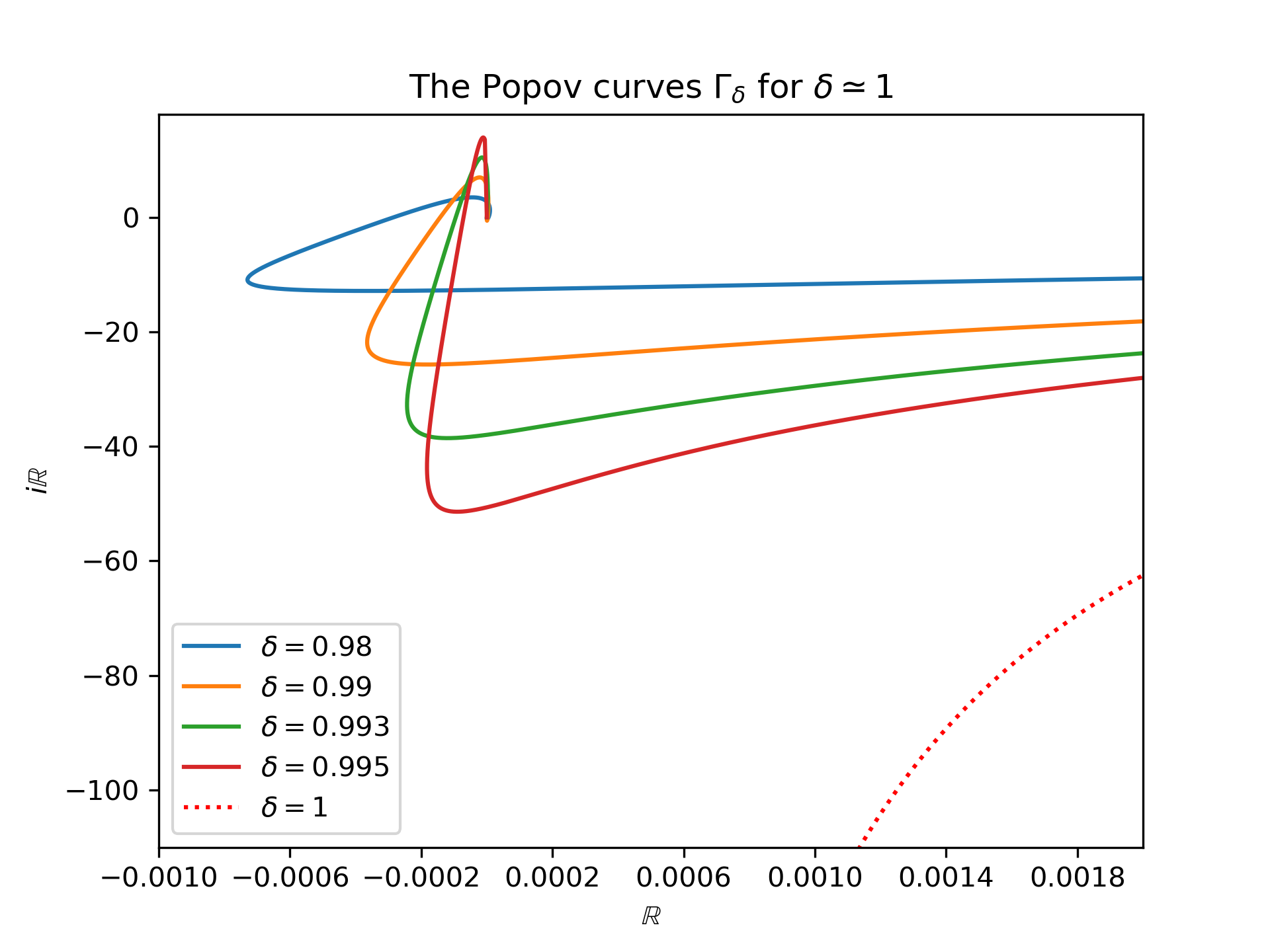} }%
    \caption{The Popov curves close to the limiting cases $\delta=0,1$.}%
    \label{PopovPlots}
\end{figure}
The shape of the Popov curves found by the parameter study shown in
Figure \ref{PopovPlots} suggests that to each $\delta\in(0,1)$ we can
associate the uniquely determined line in $\mathbb{R}^2 \cong
\mathbb{C}$ that is tangent to $\Gamma_{\delta}(\omega)$ at its most
negative intersection point with the real axis, i.e. where
$$
\operatorname{Im}\Bigl( G_{\delta}\bigl(i\omega_{1}(\delta)\bigr)\Bigr)=0.
$$ 
That line is obviously given by 
$$
F_{q(\delta),\beta(\delta)}(x,y):=y-\frac{1}{q(\delta)}x
-\frac{1}{q(\delta)\,\beta(\delta)}=0,
$$
where, using the coordinates
\begin{equation}\label{Popcurvedelcoord}
\Gamma_{\delta}(\omega):= \bigl( x(\omega),y(\omega)\bigr):=
\Bigl( \operatorname{Re}\bigl( G_{\delta}(i\omega)\bigr),\:
\omega\,\operatorname{Im}\bigl( G_{\delta}(i\omega)\bigr)\Bigr),
\end{equation}
we set
$$
\beta(\delta):=-\frac{1}{G_{\delta}(i\omega_1(\delta))} \text{ and } 
\frac{1}{q(\delta)}:=\frac{\dot{y}}{\dot{x}} \Big | _{\omega=\omega_1(\delta)}.
$$
In spite of this numerical graphical evidence, which shows the existence of an
optimal straight line satisfying the Popov criterion up to the
maximal choice for the constant
$\hat{\beta}_{1}(x_0,L)=-1/G_{L,x_0}\bigl(i\,\omega_1(L,x_0)\bigr)$,  
we chose to state Theorem \ref{vie_L_result} in a weaker form that
does not rely on any numerical or graphical verification.
%%%%%%%%%%%%%%%%
%%%%%%%%%%%%%%%%
\subsection{Numerical verification of the Popov criterion for
  $\beta\in\bigl(0,\beta(\delta)\bigr]$ in the case $L<\infty$}
Before giving the proof of Theorem \ref{vie_L_result} we discuss how   
a numerical verification of the Popov criterion can be performed to
see that $\bigl( 0,\beta(\delta)\bigr]$ is the maximal interval of
global stability for the trivial equilibrium of \eqref{lineTs}.
Here we again rescale units of length so that for $\delta\in (0,1)$ we
can consider the transfer function
$$
  G_{\delta}(i\omega)=\frac{\sinh(\delta\sqrt{i\omega})}{2\sqrt{i\omega}
  \cosh(\sqrt{i\omega})},\:\omega\geq 0.
$$ 
While we proceed in the spirit of the proof of Proposition
\ref{PopCritLim}, we need to resort to numerical computations to check
the sign of the resulting elementary function. In order to express the
imaginary and the real part of $G_{\delta}(i\omega)$ explicitly in a
concise manner  we set
$$
  A_1:=A_1(\delta,\omega)=\cosh\bigl(\delta\sqrt{\omega/2}\bigr)\sin\bigl(\delta\sqrt{
  \omega/2}\bigr)\text{ and }
  A_2:=A_2(\delta,\omega)=\cos\bigl(\delta\sqrt{\omega/2}\bigr)\sinh\bigl(\delta\sqrt{ 
  \omega/2}\bigr)
$$
as well as 
\begin{align}
  B_1&:=B_1(\omega)=\cos\bigl(\sqrt{\omega/2}\bigr)\cosh\bigl(\sqrt{\omega/2}\bigr)+
  \sin\bigl(\sqrt{\omega/2}\bigr)\sinh\bigl(\sqrt{\omega/2}\bigr)\\
B_2&:=B_2(\omega)=\cos\bigl(\sqrt{\omega/2}\bigr)\cosh\bigl(\sqrt{\omega/2}\bigr)-
  \sin\bigl(\sqrt{\omega/2}\bigr)\sinh\bigl(\sqrt{\omega/2}\bigr)
\end{align}
and 
$$
D:=D(\omega)=\sqrt{2\omega}\Big[\cos(\sqrt{2\omega}) + \cosh(\sqrt{2\omega})\Bigr].
$$
The one can write
$$
\operatorname{Re}\bigl[ G_{\delta}(i \omega)\bigr] = \frac{1}{D}
\langle A,B\rangle,\:
\operatorname{Im}\bigl[ G_{\delta}(i \omega)\bigr] = \frac{1}{D} \det (A,B),
$$
with
$$
\langle A,B\rangle :=A_1B_1 + A_2B_2   \text{ and }  \det (A,B):=A_1B_2 - A_2B_1. 
$$
Using the coordinate representation \eqref{Popcurvedelcoord} of the
Popov curve, an explicit representation of
$$
\frac{1}{q(\delta)}:=\frac{\dot{y}}{\dot{x}} \Big | _{\omega= \omega_1(\delta)}
$$
can be found in the form  
\begin{equation}\label{slopeDiriexplicit}
\frac{1}{q(\delta)} = \omega_1(\delta)\Big\{\frac{\det(\dot{A},B)+\det(A,\dot{B})}
{\langle\dot{A},B\rangle+\langle A,\dot{B}\rangle - \langle
  A,B\rangle\dot{D}/D}\Big\}\Big | _{\omega= \omega_1(\delta)}.
\end{equation}
The dotted quantities are differentiated with respect to $\omega$ and
evaluated at $\omega_1(\delta)$. This representation is not fully
explicit since a numerical root finding procedure needs to be used in order to
locate the first positive solution $\omega_1(\delta)$ of
$\operatorname{Im}\bigl[ G_{\delta}(i \omega)\bigr] = \frac{1}{D} \det
(A,B)=0$. In principle, for any given $\delta\in(0,1)$, the zero
$\omega_1(\delta)$ can be determined with arbitrary (finite)
precision. Therefore, for each $\delta\in(0,1)$, the verification of
the Popov criterion
\begin{equation}
F_{q(\delta),\beta(\delta)}(\omega):=y(\omega)-\frac{1}{q(\delta)}x(\omega)
-\frac{1}{q(\delta)\,\beta(\delta)}\leq 0 \text{ for } \omega >0, 
\end{equation}
up to the numerical determination of $\omega_1(\delta)$, consists in
verifying that the following combination of the elementary functions
$A_i,B_i,D$ is nonpositive, i.e., that   
\begin{equation}\label{PopovLfinite}
F_{q(\delta),\beta(\delta)}(\omega)= 
\omega\det(A,B)/D-\frac{1}{q(\delta)}\Bigl[\langle A,B \rangle/D-  
\big\langle A\bigl(\omega_1(\delta)\bigr),B\bigl(\omega_1(\delta)\bigr)\big\rangle/D
\bigl(\omega_1(\delta)\bigr)\Bigr]\leq 0\text{ for }\omega > 0,
\end{equation}
where $\frac{1}{q(\delta)}$ is given by \eqref{slopeDiriexplicit}. It
is clear by the definition of $F_{q(\delta),\beta(\delta)}$, as well as
directly by inspection of the above formula, that
$F_{q(\delta),\beta(\delta)}\bigl(\omega_1(\delta)\bigr)=0$ which
reflects the tangency condition. The verification of condition
\eqref{PopovLfinite} can thus be performed by evaluation of the above
expression over a finite range for $\omega$. This follows from the
fact that we know that the curve $\Gamma_{\delta}(\cdot)$ spirals into
the origin of the complex plane exponentially fast. 
Clearly the statement of nonpositivity requires a parametric study for
$\delta$ in $(0,1)$. It can be verified analytically that
$$
\lim_{\delta \to 0} \omega_1(\delta)q(\delta)=1 \text{ and }
\lim_{\delta \to 1}\omega_1(\delta)q(\delta)=\frac{b_{\pi}}{d_{\pi}}
=\frac{3\pi+4}{3\pi}. 
$$
Thus the limit $\delta\to 1$ corresponds to the case $L=\infty$, which
is intuitively clear. In fact, observe that, owing to
\eqref{omega1explicitinfty} and to \eqref{Explq}, the product  
$$
\omega_1(x_0)q(x_0)=\frac{b_{\pi}}{d_{\pi}}=\frac{3\pi+4}{3\pi},
$$
is an invariant of the one-parameter family of Popov curves
$\big\{ \Gamma_{x_0}\, \big |\, x_0>0\big\}$. By contrast, for
$L<\infty$, the product $\omega_1(\delta)q(\delta)$ depends on $\delta$
but has the two known limits given above. \\
Based on the parameter study in Figure \ref{popovCrit}, we formulate
the following conjecture. In order to safeguard rigour we are,
somewhat reluctantly, forced to formulate the numerical result merely as a
conjecture, since it must be conceded that any parameter study cannot
replace a rigorous proof of the validity  of \eqref{PopovLfinite} for
arbitrary $\delta\in (0,1)$ in spite of the fact that the criterion
could be checked up to arbitrary finite precision for any given
specific $\delta\in (0,1)$.

\begin{figure}
  \centering
  \includegraphics[scale=.6]{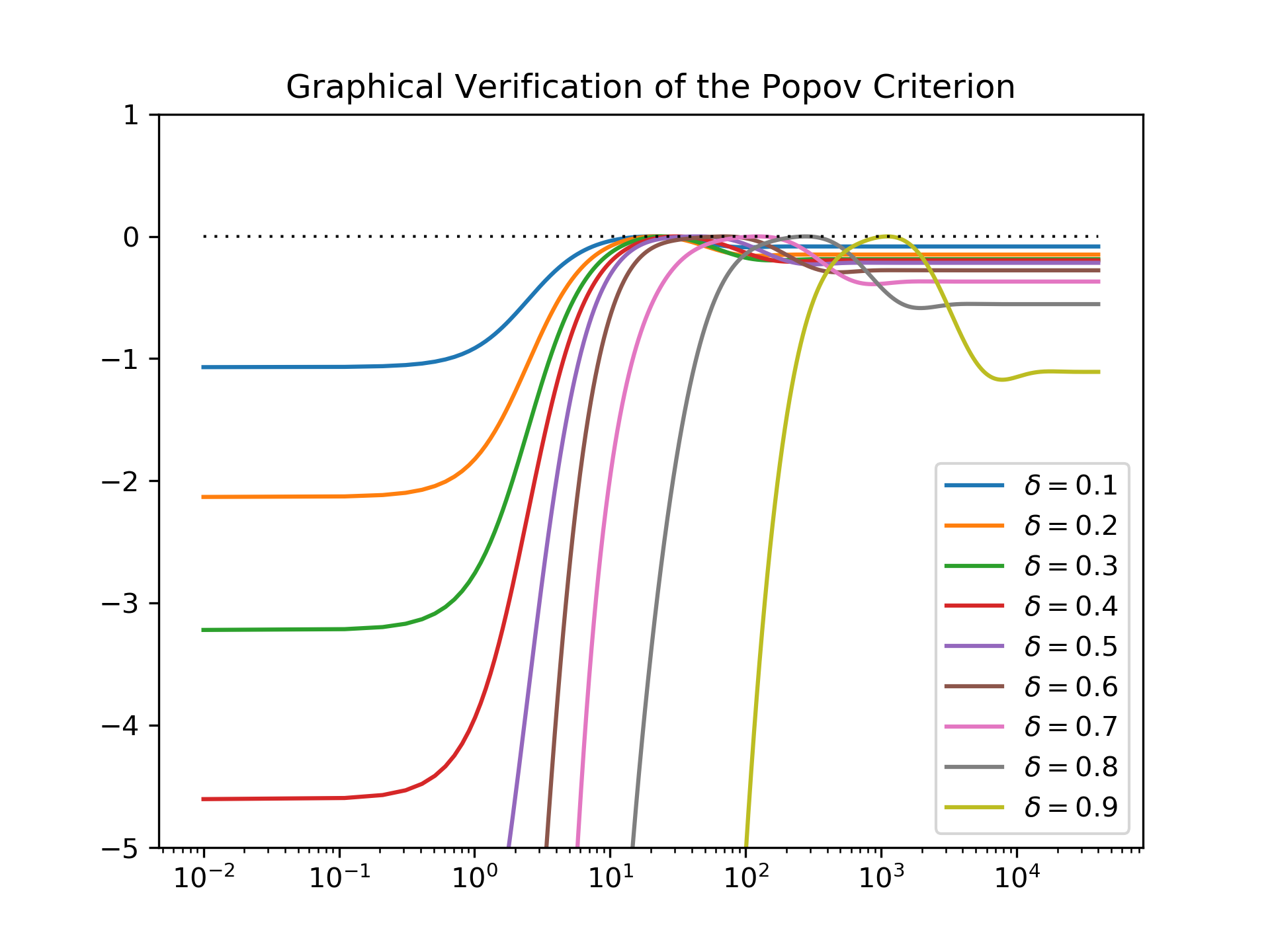}
  \caption{Depicted is the function appearing in the Popov criterion
    \eqref{PopovLfinite} for various values of $\delta\in(0,1)$}
  \label{popovCrit}
\end{figure}

\begin{conj}\label{PopDiriConj}
For any $\delta\in(0,1)$, let
$\beta(\delta):=-1/G_{\delta}\bigl( i\omega_1(\delta)\bigr)$. Then,
for any $\beta\in\bigl(0,\beta(\delta)\bigr]$, the pair $\beta$ and
$q(\delta)>0$, given by \eqref{slopeDiriexplicit}, satisfies the Popov
criterion, i.e. the inequality
\begin{equation}\label{Pdelta}
  \operatorname{Re}\bigl[ G_{\delta}(i \omega)\bigr]-q(\delta) \omega 
  \operatorname{Im}\bigl[ G_{\delta}(i \omega)\bigr] \geq -\frac{1}{\beta} 
\end{equation}
for all $\omega\in \mathbb{R}\setminus\{0\}$. 
\end{conj}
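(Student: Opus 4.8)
\begin{rem}
We outline a possible route to a proof; it mirrors, step by step, the proof of Proposition~\ref{PopCritLim} for the case $L=\infty$, with the one--variable reduction carried out there for $G_{x_0}$ replaced by a $\delta$--parametrized reduction for $G_\delta$. First, since $\beta\mapsto -1/\beta$ is increasing on $(0,\infty)$, for every $\beta\in(0,\beta(\delta)]$ one has $-1/\beta\le -1/\beta(\delta)$ and hence $F_{q(\delta),\beta}\bigl(\Gamma_\delta(\omega)\bigr)\le F_{q(\delta),\beta(\delta)}\bigl(\Gamma_\delta(\omega)\bigr)$ for all $\omega>0$, so it suffices to prove \eqref{Pdelta} for $\beta=\beta(\delta)$, i.e. that $\Psi_\delta(\omega):=F_{q(\delta),\beta(\delta)}\bigl(\Gamma_\delta(\omega)\bigr)\le 0$ on $(0,\infty)$, which is exactly \eqref{PopovLfinite}. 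By the very choice of $\beta(\delta)$ and of $q(\delta)$ in \eqref{slopeDiriexplicit} one has the tangency relations $\Psi_\delta\bigl(\omega_1(\delta)\bigr)=0$ and $\Psi_\delta'\bigl(\omega_1(\delta)\bigr)=0$, and a direct computation (as for the limiting curve in Proposition~\ref{PopCritLim}) should give $\Psi_\delta''\bigl(\omega_1(\delta)\bigr)<0$; along the way one also checks $q(\delta)>0$, which holds near the endpoints by the limits $\lim_{\delta\to 0}\omega_1(\delta)q(\delta)=1$ and $\lim_{\delta\to 1}\omega_1(\delta)q(\delta)=(3\pi+4)/(3\pi)$ recorded above. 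Thus $\omega_1(\delta)$ is a strict local maximum of $\Psi_\delta$, whence $\max_{\omega>0}\Psi_\delta\ge 0$, and the content of the claim is that this maximum equals $0$.

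Next I would confine the maximum to a compact subinterval. One has $\lim_{\omega\to 0}\Gamma_\delta(\omega)=(\delta/2,0)$, hence $\lim_{\omega\to 0}\Psi_\delta(\omega)=-\frac{\delta}{2q(\delta)}-\frac{1}{q(\delta)\beta(\delta)}<0$ (milder than the $-\infty$ behaviour encountered for $L=\infty$, where $\operatorname{Re}G_{x_0}(i\omega)$ is unbounded as $\omega\to 0$). For large $\omega$, writing $\sqrt{i\omega}=\sqrt{\omega/2}\,(1+i)$ and using $|\sinh(\delta\sqrt{i\omega})|\le\cosh(\delta\sqrt{\omega/2})+1$ together with $|\cosh(\sqrt{i\omega})|\ge\sinh(\sqrt{\omega/2})$ gives
$$
 |G_\delta(i\omega)|\le\frac{\cosh(\delta\sqrt{\omega/2})+1}{2\sqrt{\omega}\,\sinh(\sqrt{\omega/2})}\le\frac{c}{\sqrt{\omega}}\,e^{-(1-\delta)\sqrt{\omega/2}},
$$
so both coordinates of $\Gamma_\delta(\omega)=\bigl(\operatorname{Re}G_\delta(i\omega),\,\omega\operatorname{Im}G_\delta(i\omega)\bigr)$ decay to $0$ as $\omega\to\infty$; hence $\lim_{\omega\to\infty}\Psi_\delta(\omega)=-1/\bigl(q(\delta)\beta(\delta)\bigr)<0$ and, more precisely, there is an explicit cutoff $M(\delta)<\infty$ with $\Psi_\delta(\omega)<0$ for $\omega\ge M(\delta)$. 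Consequently $\max_{\omega>0}\Psi_\delta$ is attained at an interior critical point $\widehat\omega\in\bigl(0,M(\delta)\bigr]$, and it remains to show $\Psi_\delta(\widehat\omega)\le 0$ at every such point.

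The third step is the analogue of \eqref{CritOmegaCond}--\eqref{T_fun}. Differentiating the representation \eqref{Popcurvedelcoord} of $\Gamma_\delta$ in terms of $A_1,A_2,B_1,B_2,D$ and their $\omega$--derivatives, a critical point $\widehat\omega$ must satisfy a transcendental equation of the form $\tan\bigl(\delta\sqrt{\widehat\omega/2}\bigr)=\mathcal{T}_\delta\bigl(\sqrt{\widehat\omega/2}\bigr)$ for an explicit elementary $\mathcal{T}_\delta$ (rational in $y=\sqrt{\widehat\omega/2}$ in the limits $\delta\to 0,1$, cf.\ \eqref{T_fun}) with only finitely many relevant real poles $y_s(\delta)$ on $(0,\infty)$; feeding this relation back into \eqref{PopovLfinite} should reduce $\Psi_\delta(\widehat\omega)\le 0$ to a closed--form inequality among $\sin,\cos,\sinh,\cosh,\exp$ of $y$ and $\delta y$, with $\delta\in(0,1)$ a parameter, parallel to the $L=\infty$ inequality ``$2\cos(y)\,y^2[1+T(y)]+d_\pi\cos(y)[1-T(y)]+\frac{d_\pi}{c_\pi}\,y e^{y}\ge 0$ for $y>y_s$''. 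At the ends of the parameter range this degenerates into explicit cases: as $\delta\to 0$ into the inequality attached to $\widetilde{\Gamma}^0$ (with $\omega_1^0=2\pi^2$ and $\beta_1^0,q_1^0$ given by Remark~\ref{explicit_small_delta}), and as $\delta\to 1$ into the inequality for $G_1(s)=\tanh(\sqrt s)/(2\sqrt s)$, both handled as in Proposition~\ref{PopCritLim}; on a compact sub-range $[\delta_-,\delta_+]\subset(0,1)$ one would then attempt a monotonicity/convexity argument in $y$, tracking $y_s(\delta)$ and $\omega_1(\delta)$, to obtain a bound uniform in $\delta$.

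The hard part is exactly this last, $\delta$--uniform, elementary inequality. For $L=\infty$ the reduction collapsed to a single clean one--variable inequality, pinned at the exact value $y=\sqrt{b_\pi/2}=3\pi/4$ where it vanishes; here the analogous quantity genuinely depends on both $y$ and $\delta$, the pole $y_s(\delta)$ and the root $\omega_1(\delta)$ move with $\delta$, and the two limiting inequalities are governed by different closed forms, so no single reduction presents itself — and, since $\beta_{1,L}$ was seen not to depend monotonically on $L$ in Section~\ref{SecLinear}, a monotone dependence on $\delta$ should not be expected. A fully rigorous proof would thus seem to require either the discovery of a suitable monotone or convex structure in $\delta$, or a validated (interval--arithmetic) verification of \eqref{PopovLfinite} over $\delta\in(0,1)$, made a finite computation by the cutoff $M(\delta)$ above together with a modulus of continuity in $\delta$; this is why we record the statement only as a conjecture.
\end{rem}
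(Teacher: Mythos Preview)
Your treatment is appropriate and matches the paper's stance: this statement is recorded in the paper as a \emph{conjecture}, not a theorem, and the paper offers no proof. The paper's own justification rests entirely on the numerical parameter study shown in Figure~\ref{popovCrit}, where the function in \eqref{PopovLfinite} is plotted for a range of values $\delta\in(0,1)$ and observed to be nonpositive; the surrounding text explicitly concedes that ``any parameter study cannot replace a rigorous proof'' and that this is why the statement is formulated ``merely as a conjecture''.

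Your remark in fact goes somewhat further than the paper by sketching an analytical route modeled on Proposition~\ref{PopCritLim}: the reduction to $\beta=\beta(\delta)$, the tangency relations at $\omega_1(\delta)$, the confinement of the maximum to a compact interval via the limits at $\omega\to 0$ and $\omega\to\infty$, and the critical-point substitution are all the natural $\delta$-parametrized analogues of the $L=\infty$ steps. Your diagnosis of the obstruction --- that the resulting inequality genuinely depends on both $y$ and $\delta$, that the pole $y_s(\delta)$ and the root $\omega_1(\delta)$ move with $\delta$, and that the non-monotone behaviour of $\beta_{1,L}$ in $L$ observed in Section~\ref{SecLinear} rules out a naive monotonicity argument --- is precisely why the paper does not attempt a proof. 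There is no gap to flag: you and the paper agree that a rigorous argument is currently unavailable, and your outline is a reasonable elaboration of what one would require, including the option of a validated interval-arithmetic verification that the paper does not mention.
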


We finally prove the theorem as it was formulated at the beginning of
this section, i.e. without making any reference to the conjecture above. 

\subsection*{Proof of Theorem \ref{vie_L_result}}
The proof relies on the application of the criterion derived in
Proposition \ref{Prop_stab_vie}. For arbitrary fixed $L>x_0>0$ we look
for a parameter value $\widehat{\beta}_1(x_0,L)>0$ such that, for any
$\beta\in\bigl(0,\widehat{\beta}_1(x_0,L)\bigr)$, it is possible to
find $q(x_0)>0$ such that
$$
 \widehat{a}_L(\omega)+q(x_0)\,\widehat{a_L'}(\omega)-\frac{1}{\beta} \leq 0
 \text{ for } \omega>0. 
$$
This then entails that all solutions of the Volterra integral equation
\eqref{vie_L} with arbitrary parameters $x_0>0$ and $L>x_0$ converge
to zero as $t\to\infty\,$ as long as
$\beta\in\bigl(0,\widehat{\beta}_1(x_0,L)\bigr)$. If this is the case,
we call $\bigl(0,\widehat{\beta}_1(x_0,L)\bigr)$ an interval of stability 
for the integral equation with parameters $L>x_0>0$. We define the
Popov curve associated with $x_0$ and $L$ by
$$
\Gamma_{x_0,L}(\omega)= \bigl( x(\omega),y(\omega)\bigr)=
\Bigl( \operatorname{Re}\bigl[ G_{x_0,L}(i \omega)\bigr],\:
\omega\,\operatorname{Im}\bigl[ G_{x_0,L}(i \omega)\bigr]\Bigr).
$$
By introducing the functional 
$$
F_{q,\beta}(x,y)=y-\frac{x}{q}-\frac{1}{q\,\beta}\,,
$$
the verification of the stability criterion reduces to showing that
suitable choices of the parameters $\beta$ and $q$ lead to
$$
F_{q,\beta}\bigl(\Gamma_{x_0,L}(\omega)\bigr) \leq 0,\:\omega>0.
$$
For arbitrary $x_0>0$ and $L>x_0$ we set 
\begin{equation}\label{beta1hatL}
\widehat{\beta}_1(x_0,L)= \frac{1}{M(x_0,L)\,q(x_0)} 
\end{equation}
where
$$
q(x_0)=\frac{x_0^2}{d_\pi},\,d_\pi=\frac{9\pi^3}{8\pi+\frac{32}{3}}
$$
and
$$
M(x_0,L):=\max\limits_{\omega > 0} \Bigl\{
\omega\,\operatorname{Im}\bigl[ G_{x_0,L}(i \omega)\bigr]
- \frac{1}{q(x_0)}\operatorname{Re}\bigl[ G_{x_0,L}(i \omega)\bigr]
\Bigr\}.
$$
To show that the above maximum exists and that it is positive, observe
that, by definition, the Popov set $\Omega^{\text{Pop}}_{L,x_0}$
contains a minimal element $\omega_1>0$ such that 
$$
\operatorname{Im}\bigl[ G_{x_0,L}(i \omega_1)\bigr]=0 \text{ and
}\operatorname{Re}\bigl[ G_{x_0,L}(i \omega_1)\bigr]<0.
$$ 
This shows that $\widehat{\beta}_1(x_0,L)>0$, if the maximum exists. In
order to obtain the existence of the maximum a simple calculation
yields that
\begin{align}\label{ExplFLx0}
  H(\omega)&:=\omega\,\operatorname{Im}\bigl[ G_{x_0,L}(i \omega)\bigr]
  - \frac{1}{q(x_0)}\operatorname{Im}\bigl[ G_{x_0,L}(i \omega)\bigr]=\notag\\
 &\frac{\sqrt{\omega}}{2} \operatorname{Im} \Bigl[
   \;\overline{\sqrt{i}}\,
   \big\{\cosh(x_0\sqrt{i\omega})\tanh(L\sqrt{i\omega}) -
   \sinh(x_0\sqrt{i\omega})\big\} \Bigr]  \\ 
   &-\frac{d_\pi}{2\,x_0^2\,\sqrt{\omega}}\operatorname{Re} \Bigl[
   \;\overline{\sqrt{i}}\,          
  \big\{\cosh(x_0\sqrt{i\omega})\tanh(L\sqrt{i\omega}) -
   \sinh(x_0\sqrt{i\omega})\big\}\Bigr]. \notag  
\end{align}
Then notice that $\lim_{\omega\to 0^+} H(\omega)$ exists and that
$$
\lim_{\omega\to 0^+} H(\omega)\;<0.
$$
One also has that 
$$
\lim_{\omega\to \infty} H(\omega)=0,
$$
which is verified by using  
$$
\lim_{\omega\to \infty}\tanh(L\sqrt{i\omega})= 1,
$$
and that
$$
\lim_{\omega\to \infty}
\sqrt{\omega}\Bigl[\cosh(x_0\sqrt{i\omega})\tanh(L\sqrt{i\omega}) -
\sinh(x_0\sqrt{i\omega})\Bigr] = 0. 
$$
To study $\lim_{\omega\to 0^{+}} H(\omega)$, one observes that 
$$
\lim_{\omega\to 0^+} \frac{\overline{\sqrt{i}}}{\sqrt{\omega}}
\Bigl[ \cosh(x_0\sqrt{i\omega})\tanh(L\sqrt{i\omega}) -
\sinh(x_0\sqrt{i\omega})\Bigr] >0.
$$
Since $H(\omega)$ is negative for sufficiently small arguments,
converges to zero as $\omega\to\infty$, and has positive values, the
maximum must be attained and be positive. Now for any
$\beta\in\bigl(0,\widehat{\beta}_1(x_0,L)\bigr)$, we obtain the estimate
$$  
F_{q(x_0),\beta}\bigl(\Gamma_{x_0,L}(\omega)\bigr)< 
F_{q(x_0),\widehat{\beta}_1(x_0,L)}\bigl(\Gamma_{x_0,L}(\omega)\bigr),\:\omega>0.
$$
It only remains to verify that 
$$  
F_{q(x_0),{\widehat\beta}_1(x_0,L)}\bigl(\Gamma_{x_0,L}(\omega)\bigr) =
H(\omega) - \frac{1}{q(x_0)\,\widehat{\beta}_1(x_0,L)} \leq 0,\:\omega>0,
$$
which follows from the definition of $\widehat{\beta_1}(x_0,L)$ since
$$
F_{q(x_0),\widehat{\beta}_1(x_0,L)}\bigl(\Gamma_{x_0,L}(\omega)\bigr)
\leq \max\limits_{\omega > 0} H(\omega) -
\frac{1}{q(x_0)\widehat{\beta}_1(x_0,L)} = M(x_0,L) -
\frac{1}{q(x_0)\,\widehat{\beta}_1(x_0,L)}=0. 
$$
Next we present the argument producing the alternative interval of
stability $\bigl( 0,\beta_1(x_0)\bigr)=(0,\frac{c_\pi}{x_0})$ for any
$x_0>0$ and all sufficiently large $L>x_0$. For fixed $x_0>0$, we have
shown in Remark \ref{Ltoinfinity} that
$$
G_{x_0,L}(i\omega)\to G_{x_0}(i\omega) \text{ as } L\to\infty,
$$ 
uniformly for $\omega$ in intervals of the form $(c,\infty)$ with
arbitrary $c>0$. This also implies that
$$
\Gamma_{x_0,L}(\omega) \to \Gamma_{x_0}(\omega) \text{ in } \mathbb{C}
\text{ as } L\to\infty,
$$
and that
$$
F_{q(x_0),\beta_1(x_0)}\bigl(\Gamma_{x_0,L}(\omega)\bigr) \to
F_{q(x_0),\beta_1(x_0)}\bigl(\Gamma_{x_0}(\omega)\bigr) \text{ as }
L\to\infty,
$$ 
uniformly for $\omega$ in $(c,\infty)$ with arbitrary
$c>0$. Hence, for any $\varepsilon>0$ and any $c>0$, there exists
$C(\varepsilon,c)>0$ such that 
$$
F_{q(x_0),\beta_1(x_0)}\bigl(\Gamma_{x_0,L}(\omega)\bigr) 
\leq 
F_{q(x_0),\beta_1(x_0)}\bigl(\Gamma_{x_0}(\omega)\bigr) +
\varepsilon \text{ for }L\geq C(\varepsilon,c),
$$ 
and $\omega\in(c,\infty)$. For any $\beta\in \bigl( 0,\beta_1(x_0)
\bigr)$ we can choose $\delta(\beta):=\frac{\delta}{q(x_0)}\bigl[\frac{1}{\beta} -
\frac{1}{\beta_1(x_0)} \bigr]$, $\delta\in(0,1)$, to obtain
$$
 F_{q(x_0),\beta}\bigl(\Gamma_{x_0,L}(\omega)\bigr) + \delta(\beta)
 \leq F_{q(x_0),\beta_1(x_0)}\bigl(\Gamma_{x_0,L}(\omega)\bigr) \text{
 for } \omega>0. 
$$
Thus, for $L\geq C\bigl( \delta(\beta),c \bigr)$ and for
$\omega\in(c,\infty)$ we have that
$$
F_{q(x_0),\beta}\bigl(\Gamma_{x_0,L}(\omega)\bigr) + \delta(\beta) \leq 
F_{q(x_0),\beta_1(x_0)}\bigl(\Gamma_{x_0,L}(\omega)\bigr)\leq 
F_{q(x_0),\beta_1(x_0)}\bigl(\Gamma_{x_0}(\omega)\bigr) + \delta(\beta).
$$
Consequently, by Proposition \ref{PopCritLim}, it holds that
$$
F_{q(x_0),\beta}\bigl(\Gamma_{x_0,L}(\omega)\bigr) \leq
F_{q(x_0),\beta_1(x_0)}\bigl(\Gamma_{x_0}(\omega)\bigr) \leq 0
$$
for $L\geq C\bigl( \delta(\beta),c \bigr)$ and
$\omega\in(c,\infty)$. Since we know from the first part of the proof
that
$$
\lim_{\omega\to 0^+}
F_{q(x_0),\beta}\bigl(\Gamma_{x_0,L}(\omega)\bigr) <
-\frac{1}{\beta q(x_0)}<0,  
$$
and $c>0$ can be chosen arbitrarily small, the inequality
$$
F_{q(x_0),\beta}\bigl(\Gamma_{x_0,L}(\omega)\bigr)\leq 0, 
$$
holds for $\omega\in(0,\infty)$ and $L\geq C(x_0)$ for a large
enough $C(x_0)>0$.
\hfill{\qed}

%%%%%%%%%%%%%%%%%%%%%%%%%%%%%%%%%%%%%%%%%%%%%%%%%%%%%%%%%%%%%%%%%%%%%%%%%%%%%%%%%%%%%%%%%%%%%%%%%%%%
%%%%%%%%%%%%%%%%%%%%%%%%%%%%%%%%%%%%%%%%%%%%%%%%%%%%%%%%%%%%%%%%%%%%%%%%%%%%%%%%%%%%%%%%%%%%%%%%%%%%
% Global stability and Hopf bifurcation results
%%%%%%%%%%%%%%%%%%%%%%%%%%%%%%%%%%%%%%%%%%%%%%%%%%%%%%%%%%%%%%%%%%%%%%%%%%%%%%%%%%%%%%%%%%%%%%%%%%%%
%%%%%%%%%%%%%%%%%%%%%%%%%%%%%%%%%%%%%%%%%%%%%%%%%%%%%%%%%%%%%%%%%%%%%%%%%%%%%%%%%%%%%%%%%%%%%%%%%%%%

\section{Global stability and Hopf bifurcation results for the nonlinear PDE}\label{SecHopfRes}
The stability result obtained in the previous section for the Volterra
integral equation will now be applied to the nonlinear partial
differential equation \eqref{lineTs}. It will be instrumental to
infer the decay of the solutions of the PDE from the decay of the
associated solutions of the integral equation. The following
proposition is proved in \cite[Proposition 2.3.]{GM20} in a slightly
different setting, yet its proof can readily be adapted to the present
situation. 

\begin{prop}\label{ConvEquiv_L}
For fixed parameters $\beta, L\in(0,\infty)$ and $x_0\in(0,L)$
consider orbits $\Phi_\beta(\cdot, u_0)$ of the semiflow
$\bigl(\Phi_\beta,\operatorname{H}_L^1\bigr)$ associated with
\eqref{lineTs}. Then, as $t\to\infty\,$, for any $u_{0}\in
\operatorname{H}_L^1$, it holds that
$$
 \Phi _\beta (t,u_0)\longrightarrow 0 \text{ in } \operatorname{H}_L^1
 \iff  \bigl( \Phi_\beta(t,u_0)\bigr)(x_0) \longrightarrow 0
 \text{ in } \mathbb{R}.
$$
\end{prop}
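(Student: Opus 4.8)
The plan is to pass from decay of the scalar quantity $y(t):=\bigl(\Phi_\beta(t,u_0)\bigr)(x_0)$ to decay of the whole orbit by feeding $y$ into the variation of constants formula and exploiting that, unlike on the line, the Dirichlet semigroup $e^{-tA_L}$ decays exponentially. The implication ``$\Rightarrow$'' is immediate: since $\operatorname{H}^1_L\hookrightarrow\operatorname{C}([-L,L])$, evaluation at $x_0$ is a bounded linear functional, so $\Phi_\beta(t,u_0)\to 0$ in $\operatorname{H}^1_L$ forces $y(t)\to 0$ in $\mathbb{R}$.

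For ``$\Leftarrow$'' I would write $u(t)=\Phi_\beta(t,u_0)$ and use that $u$ is the mild solution,
$$
 u(t)=e^{-tA_L}u_0-\int_0^t f\bigl(\beta y(\tau)\bigr)\,e^{-(t-\tau)A_L}\delta_0\,d\tau .
$$
The first term tends to $0$ in $\operatorname{H}^1_L$ since $\|e^{-tA_L}u_0\|_{\operatorname{H}^1_L}\le c\,e^{-\alpha_L t}\|u_0\|_{\operatorname{H}^1_L}$ with $\alpha_L=\lambda^2_{1,L}>0$. For the second term set $g(\tau):=f\bigl(\beta y(\tau)\bigr)$; by hypothesis $y(t)\to 0$, and since $f(0)=0$ with $f$ continuous and bounded, $g$ is bounded and $g(\tau)\to 0$ as $\tau\to\infty$. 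Using $\delta_0\in\operatorname{H}^{-\frac{1}{2}-\varepsilon}_L$ together with the standard smoothing estimate for analytic semigroups in interpolation scales (see \cite{Gol85,Pa83}), one has, for $\varepsilon\in(0,\tfrac{1}{2})$,
$$
 \|e^{-sA_L}\delta_0\|_{\operatorname{H}^1_L}\le \frac{c}{s^{3/4+\varepsilon/2}}\,e^{-\alpha_L s},\qquad s>0,
$$
the exponent $3/4+\varepsilon/2$ being $<1$ and hence the bound integrable at $s=0$.

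It then remains to show the convolution term vanishes. Substituting $s=t-\tau$ and using the above estimate,
$$
 \Bigl\|\int_0^t f\bigl(\beta y(\tau)\bigr)e^{-(t-\tau)A_L}\delta_0\,d\tau\Bigr\|_{\operatorname{H}^1_L}\le c\int_0^t \frac{|g(t-s)|}{s^{3/4+\varepsilon/2}}\,e^{-\alpha_L s}\,ds ,
$$
and I would bound the right-hand side by the usual splitting: given $\eta>0$ pick $T$ with $|g(\tau)|\le\eta$ for $\tau\ge T$, and for $t>2T$ split the integral at $s=t-T$. On $(0,t-T)$ one has $t-s>T$, so $|g(t-s)|\le\eta$ and this part is $\le\eta\int_0^\infty s^{-3/4-\varepsilon/2}e^{-\alpha_L s}\,ds=c'\eta$; on $(t-T,t)$ one uses $|g|\le\|g\|_\infty$, $s^{-3/4-\varepsilon/2}\le(t-T)^{-3/4-\varepsilon/2}$ and $e^{-\alpha_L s}\le e^{-\alpha_L(t-T)}$, so this part is $\le c\|g\|_\infty\,T\,(t-T)^{-3/4-\varepsilon/2}e^{-\alpha_L(t-T)}\to 0$ as $t\to\infty$. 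Hence $\limsup_{t\to\infty}\|\cdot\|_{\operatorname{H}^1_L}\le c'\eta$ for every $\eta>0$, so the convolution term tends to $0$ in $\operatorname{H}^1_L$, and together with the first term this yields $\Phi_\beta(t,u_0)\to 0$ in $\operatorname{H}^1_L$.

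The hard part is precisely this convolution term: because $\delta_0$ is only a distribution, $\|e^{-sA_L}\delta_0\|_{\operatorname{H}^1_L}$ is singular as $s\to 0^+$, so one must use \emph{simultaneously} the integrable parabolic smoothing singularity near $s=0$ and the exponential decay of the Dirichlet semigroup in the splitting above. This is exactly why the statement is restricted to $L<\infty$: on the line the heat semigroup applied to $\delta_0$ decays only like $t^{-1/2}$, the exponential factor is absent, the splitting breaks down, and pointwise decay at $x_0$ cannot be upgraded to decay in $\operatorname{H}^1$ by this route.
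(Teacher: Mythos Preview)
Your argument is correct. The implication ``$\Rightarrow$'' is handled identically to the paper via the continuity of the trace, and for ``$\Leftarrow$'' your smoothing estimate $\|e^{-sA_L}\delta_0\|_{\operatorname{H}^1_L}\le c\,s^{-(3/4+\varepsilon/2)}e^{-\alpha_L s}$ and the subsequent splitting of the convolution are both valid; the exponent $3/4+\varepsilon/2<1$ gives local integrability and the exponential factor handles the tail, so the argument closes.

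The route, however, is genuinely different from the paper's. The paper does not use the abstract smoothing estimate $\|e^{-sA_L}\delta_0\|_{\operatorname{H}^1_L}$ at all: instead it expands the solution in the eigenbasis $\varphi_{k,L}$, first extracts pointwise convergence $u(t,x)\to 0$ from the spectral series for the kernel $k_L$, and then upgrades to $\operatorname{H}^1_L$ by controlling the Fourier coefficients $\hat u_n(t)$ mode by mode, showing that $(1+n^2)|\hat u_n(t)|^2$ is summable uniformly in $t$ (so the tail is small) and that each of the finitely many remaining modes decays via a direct splitting of its one-dimensional Duhamel integral. Your approach bypasses the pointwise step and the explicit spectral resolution entirely, trading them for one interpolation-scale estimate and a single convolution splitting; this is shorter and would transfer verbatim to situations where an explicit eigenbasis is unavailable. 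The paper's spectral argument, on the other hand, makes the mechanism more transparent (each mode is governed by a scalar Duhamel formula with an explicit decay rate $\lambda_{n,L}^2$) and is consistent with the mode-by-mode computations used elsewhere in the paper.
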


\begin{proof}
``$\Rightarrow$'': If $\Phi _\beta (t,u_0)\to 0$ as $t\to\infty$, then
the operation of ``taking the trace'' defines a bounded linear
operator $\gamma_{x_0}\in\mathcal{L}(\operatorname{H}_L^1,\mathbb{R})$  
and therefore its continuity implies 
$$
\gamma_{x_0} \bigl( \Phi_\beta(t,u_0)\bigr)=\bigl(
\Phi_\beta(t,u_0)\bigr)(x_0)\to 0 \text{ as } t\to\infty.
$$
%%%%%%%%%%%%%%%%%%%%%%%%%%%%%%%%%%%%%%%%
  ``$\Leftarrow$'': If $y(t):=\bigl( \Phi_\beta(t,u_0)\bigr)(x_0) \to
  0$ as $t\to\infty$, then by \eqref{vie_L} we have that
  $$
  \lim _{t\to\infty} y(t) = \lim _{t\to\infty}\Bigl[ g_L(t)+\int _0^t
  a_L(t-\tau)f\bigl( \beta u(\tau,x_0)\bigr)\, d\tau \Bigr] \,= \,0,
  $$
%%%%%%%%%%%%%%%%%%%%%%%%%%%%%%%%%%%%%%%%
which entails 
$$
\lim _{t\to\infty}\int _0^t a(t-\tau)f\bigl( \beta u(\tau,x_0)\bigr)\,
d\tau= 0,
$$
since we know by the properties of the semigroup $e^{-tA_L}$ that
$\lim _{t\to\infty} g_L(t)= 0$. Next notice that, for arbitrary $x\in
(-L,L)$, it holds that
$$
u(t,x):=\Phi _\beta (t,u_0)(x) = \bigl( e^{-tA_L}u_0 \bigr)(x) + \int
_0^t k_L(t-\tau,x)  f\bigl(\beta u(\tau,x_0)\bigr)\, d\tau,
$$
where, by definition \eqref{aRep}, it holds that 
$$
k_L(t,x)=-a_L(t,x)=-\bigl( e^{-tA_L}\delta _0 \bigr)(x).
$$
Again from $\lim _{t\to\infty} \bigl( e^{-tA_L}u_0 \bigr)(x)=0$ we
obtain
\begin{equation}\label{aux_1}
\lim _{t\to\infty} u(t,x)= - \lim _{t\to\infty} \int _0^t
k_L(t-\tau,x)  f\bigl(\beta u(\tau,x_0)\bigr)\, d\tau,\: x\neq x_0.
\end{equation}
Since we know by assumption that $\lim _{t\to\infty} f\bigl(\beta
u(\tau,x_0)\bigr)=0$ and, by inserting the spectral decomposition
\eqref{Fundamental_L} of $k_L$ in \eqref{aux_1}, we conclude similarly
as in \cite{GM20} that for arbitrary $x\in (-L,L)$ and $t\to\infty$
$$
\lim _{t\to\infty} u(t,x)=0\;, 
$$
i.e. we obtain the pointwise convergence of $\Phi _\beta (t,u_0)$ to
the zero function.\\
To prove that convergence to zero also occurs in the topology of
$\operatorname{H}_L^1$ we use \eqref{vcfEq} to derive the equation
satisfied by $\hat u_n(t)$, which is the $n$-th coefficient in the
spectral basis expansion of the solution 
$$
  u(t,x)=\sum_{k=1}^{\infty} \langle u(t,\cdot),\varphi_k \rangle
  \varphi_{k}(x) = \sum_{k=1}^{\infty} \hat u_k(t) \varphi_{k}(x).
$$
Observe that the $\operatorname{H}^1$ norm of a function $u(t,\cdot)$
of $x$ obtained for fixed $t$ is equivalent to
$$
 \| u(t,\cdot) \| _{\operatorname{H}_L^1}^2=\sum _{k=1}^\infty
 (1+k^2)|\hat u_k|^2.
$$
This is seen by extending $u(t,\cdot)$ to a periodic function ${\tilde
  u}(t,\cdot)$ by reflection as described in \eqref{perExtension} and 
noticing the direct relation between the standard Fourier series of 
$\tilde u$ and the spectral basis expansion of $u\,$. 
We also use the fact that $u\in\operatorname{H}_L^1$ if and only if
$\tilde u\in\operatorname{H}^1_\pi(-2L,2L)$, where the index indicates
periodicity.\\   
Next look at the evolution of the single modes of the solution, which
is determined by
$$
 \hat u_n(t)=e^{-t \lambda_{L,n}^2}\, \langle \hat u_0,\varphi_{L,n}
 \rangle -\int_0^t f\bigl(\beta
 u(\tau,x_0)\bigr)e^{-(t-\tau)\lambda_{L,n}^2}\, d\tau.
$$
A simple calculation exploiting the boundedness of $f$ then yields
$$
 (1+n^2)\big | \hat u_n(t)\big |^2\leq c(1+n^2) |\hat u_{0n}|+
 \frac{c}{\lambda_{L,n}^4}(1+n^2),\: n\geq 1.
$$
This, together with the fact that $u_0\in\operatorname{H}^1$ and that
$\lambda_{L,n}^4\sim n^4$ as $n\to\infty$, implies 
that the series 
\begin{equation}\label{seriesh1}
    \sum_{n\geq 1}(1+n^2)|\hat u_n(t)|^2
\end{equation}
converges uniformly in $t\geq 0$. This shows that the tail of the
Fourier representation of the solution can be made smaller than any
given $\varepsilon>0$ independently of $t\geq 0$. For the remaining
finitely many terms, a direct estimate of the integral yields
smallness. It namely follows from the solution representation that
\begin{align*}
  |\hat u_n(t)| &\leq e^{-t \lambda_{L,0}t}|\widehat{u_0}_n|+
  \int_0^{t_\varepsilon} e^{-(t-\tau)\lambda_{L,0}^2}\, d\tau+
  \max_{\tau\geq t_\varepsilon} |f\bigl(\beta u(\tau,x_0)|
  \int_{t_\varepsilon}^t e^{-(t-\tau)\lambda_{L,0}^2}\, d\tau\\
  &\leq e^{-(t-t_\varepsilon)\lambda_{L,0}^2}\Big\{ |\widehat{u_0}_n|
  +\frac{1}{\lambda_{L,0}^2}\Big\}+c \max_{\tau\geq t_\varepsilon}
    |f\bigl(\beta u(\tau,x_0)|\leq \varepsilon
\end{align*}
for $t$ large enough. This shows that
$$
 \sum _{n\geq 1}(1+n^2)|\hat u_n(t)|^2\to 0\text{ as }t\to\infty,
$$
or, in other words that $u(t)\to 0$ in $\operatorname{H}^1$.
\end{proof} 

We can now proceed to summarize the main results of our analysis.

\begin{thm}\label{Main_Result_L}
For arbitrary choice of the paramters $L>x_0>0$ the following assertions hold:
\renewcommand{\labelenumi}{\roman{enumi})}
\begin{itemize}
\item [(i)]
There exsist $\varepsilon>0$ such that for any $\beta\in
\bigl( \beta_1(x_0,L),\beta_1(x_0,L) + \varepsilon \bigr)$, with 
$$
 \beta_1(x_0,L)=-\frac{1}{G_{L,x_0}\bigl(
 i\omega_1(L,x_0)\bigr)}\text{ and }\omega_1(L,x_0)=\min
 \Omega^{Pop}_{L,x_0}, 
$$  
the semiflow $\bigl(\Phi_\beta,\operatorname{H}_L^1\bigr)$ associated
with \eqref{lineTs}$_{L,x_0}$ possesses a non-trivial periodic orbit.
\item[(ii)]
For $\beta\in\bigl(0,\widehat{\beta_1}(x_0,L)\bigr)$ where
$\widehat{\beta}_1(x_0,L)>0$ is defined in \eqref{beta1hatL}, every
(semi-)orbit of the semiflow
$\bigl(\Phi_\beta,\operatorname{H}_L^1\bigr)$ associated with
\eqref{lineTs}$_{L,x_0}$ converges to zero as $t\to\infty$. 
\item[(iii)]
If we assume that $L\geq C(x_0)$ for some sufficiently large constant
$C(x_0)$, then for any
$\beta\in\bigl(0,\beta_1(x_0)\bigr)=\bigl(0,\frac{c_{\pi}}{x_0}\bigr)$, for
$c_\pi=\frac{3\pi}{\sqrt{2}}e^{\frac{3\pi}{4}}$, every (semi-)orbit of
the semiflow $\bigl(\Phi_\beta,\operatorname{H}_L^1\bigr)$ associated
with \eqref{lineTs}$_{L,x_0}$ converges to zero as $t\to\infty$. 
\end{itemize}
\end{thm}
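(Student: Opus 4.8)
The plan is to derive parts (ii) and (iii) as immediate consequences of the Volterra theory developed in Section~\ref{SectionVIE}, and to obtain part (i) from a Hopf bifurcation theorem applied to the semiflow $\bigl(\Phi_\beta,\operatorname{H}^1_L\bigr)$ once the spectral hypotheses have been read off from Section~\ref{SecLinear}.

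For (ii) and (iii), fix $u_0\in\operatorname{H}^1_L$ and put $y(t):=\bigl(\Phi_\beta(t,u_0)\bigr)(x_0)$. As recalled at the start of Section~\ref{SectionVIE}, $y$ solves the integral equation~\eqref{vie_L} with parameters $\beta,L,x_0$, and from $\|a_L\|_{\operatorname{L}^1}<\infty$ together with the boundedness of $f$ and of $g_L$ one sees that $y\in\operatorname{BC}\bigl((0,\infty),\mathbb{R}\bigr)$. In case (ii) the hypothesis $\beta\in\bigl(0,\widehat\beta_1(x_0,L)\bigr)$, $L>x_0>0$, is precisely the first alternative of Theorem~\ref{vie_L_result}; in case (iii) the hypothesis $\beta\in\bigl(0,\beta_1(x_0)\bigr)$ together with $L\geq C(x_0)$ is its second alternative. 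In either case Theorem~\ref{vie_L_result} yields $\lim_{t\to\infty}y(t)=0$, and Proposition~\ref{ConvEquiv_L} then upgrades this to $\Phi_\beta(t,u_0)\to0$ in $\operatorname{H}^1_L$, which is the claim.

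For (i), note first that the linearization of \eqref{lineTs}$_{L,x_0}$ at the trivial equilibrium is $-A_{L,\beta}$, since $f'(0)=1$, and that $-A_{L,\beta}$ has discrete spectrum because $A_L$ has compact resolvent. Write $h(s,\beta):=1+\beta\,G^L_s(x_0,0)=1+\beta\,G_{L,x_0}(s)$; by the analysis of Section~\ref{SecLinear} — Proposition~\ref{betaCritical-Lcase} and the surrounding discussion of the zeros of $h(\cdot,\beta)$, which for $L>x_0$ large are simple, non-degenerate, and close to the zeros of $1+\beta\,G_s(x_0)$ characterized in Proposition~\ref{spec} — the value $\beta_1(x_0,L)=-1/G_{L,x_0}\bigl(i\omega_1(L,x_0)\bigr)$, with $\omega_1(L,x_0)=\min\Omega^{\text{Pop}}_{L,x_0}$, is the smallest parameter at which an eigenvalue of $-A_{L,\beta}$ reaches the imaginary axis. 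Consequently, at $\beta=\beta_1(x_0,L)$ the operator $-A_{L,\beta_1}$ has the simple conjugate pair $\pm i\omega_1$ on the imaginary axis, all of its remaining eigenvalues lie in the open left half plane, and in particular $ik\omega_1\notin\sigma(-A_{L,\beta_1})$ for every integer $k$ with $|k|\neq1$, so the non-resonance condition holds. Applying the implicit function theorem to $h(s,\beta)=0$ at the non-degenerate zero $(i\omega_1,\beta_1)$ produces a smooth eigenvalue branch $\beta\mapsto\lambda(\beta)$ with $\lambda(\beta_1)=i\omega_1$ and
$$
\lambda'(\beta_1)=-\,\frac{\partial_\beta h(i\omega_1,\beta_1)}{\partial_s h(i\omega_1,\beta_1)}=-\,\frac{G_{L,x_0}(i\omega_1)}{\beta_1\,\partial_s G_{L,x_0}(i\omega_1)}\,;
$$
that $\operatorname{Re}\lambda'(\beta_1)>0$, i.e. that the crossing is transversal with the pair genuinely moving from the left into the right half plane, is exactly the content of the qualitative picture and the monotonicity established in Proposition~\ref{spec} (and is illustrated in Figure~\ref{crossing}). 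With $f\in\operatorname{C}^\infty$ by assumption (iv), together with simplicity, non-resonance and transversality, a standard Hopf bifurcation theorem for generators of analytic semigroups — carried out by center-manifold reduction of $\bigl(\Phi_\beta,\operatorname{H}^1_L\bigr)$ to a planar system, equivalently by the Hopf theory for the Volterra equation~\eqref{vie_L} — yields a $\operatorname{C}^\infty$ family of non-trivial periodic orbits of $\bigl(\Phi_\beta,\operatorname{H}^1_L\bigr)$ emanating from $\beta=\beta_1(x_0,L)$, of minimal period close to $2\pi/\omega_1$.

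The main obstacle is to determine the \emph{direction} of bifurcation, so that the bifurcating periodic orbits exist for $\beta\in\bigl(\beta_1(x_0,L),\beta_1(x_0,L)+\varepsilon\bigr)$ rather than for $\beta$ below $\beta_1(x_0,L)$; this is where the bulk of the work lies. It amounts to computing the first Lyapunov coefficient of the reduced planar vector field — equivalently, the relevant cubic coefficient in the normal form extracted from \eqref{vie_L}, which is governed by $f'''(0)$ together with the values of $G_{L,x_0}$ and its derivatives at $i\omega_1$ — and checking that its sign forces the branch to be supercritical. One expects this sign to be compatible with, and indeed derivable from, the Popov inequality~\eqref{P} that was verified for the relevant parameter range, and the orbital asymptotic stability of the resulting periodic orbits that accompanies supercriticality is the content of Theorem~\ref{Stability_Periodic_L}.
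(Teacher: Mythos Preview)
Your treatment of (ii) and (iii) is correct and coincides with the paper's: reduce to the scalar Volterra equation, apply Theorem~\ref{vie_L_result}, and lift back to $\operatorname{H}^1_L$ via Proposition~\ref{ConvEquiv_L}.

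For (i), your verification of the Hopf hypotheses (discreteness of the spectrum, simplicity of $\pm i\omega_1$ at $\beta=\beta_1(x_0,L)$, non-resonance, transversality of the crossing) is also in line with the paper, which cites the abstract Hopf theorems of Amann, Kielh\"ofer and Simonett and checks exactly these conditions. The substantive divergence is in how you handle the \emph{direction} of the branch. You propose to compute the first Lyapunov coefficient of the reduced planar system, and you leave this as an expectation (``one expects this sign to be compatible with \ldots''). That computation is never carried out, so as written this is a genuine gap in your argument for the claim that the periodic orbits exist for $\beta>\beta_1(x_0,L)$.

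The paper avoids this computation entirely. Its proof of (i) only records the Hopf hypotheses; the direction is settled separately, by the exclusion argument that appears in Remark~\ref{Precise_Hopf} and Theorem~\ref{Stability_Periodic_L}: the Lyapunov--Schmidt parametrization makes $s\mapsto\beta(s)$ continuous and injective on $(0,\varepsilon)$, hence strictly monotone; if it were decreasing one would obtain non-trivial periodic orbits for $\beta<\beta_1(x_0,L)$, which is impossible once the trivial equilibrium is globally attracting on $\bigl(0,\beta_1(x_0,L)\bigr)$ by (ii)/(iii). Thus $\dot\beta(s)>0$, the bifurcation is supercritical, and the periodic orbits sit in $\bigl(\beta_1,\beta_1+\varepsilon\bigr)$ as claimed. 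This is considerably simpler than a Lyapunov-coefficient calculation, and it is the argument you should substitute for your final paragraph. Note, however, that this route uses global stability all the way up to $\beta_1(x_0,L)$, which the paper inserts as an explicit hypothesis in Theorem~\ref{Stability_Periodic_L} (cf.\ Remark~\ref{DropStabAssump}); without Conjecture~\ref{PopDiriConj} the interval produced by (ii) need not reach $\beta_1(x_0,L)$.
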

\begin{proof}
The second and third assertion follows directly from our main result
on the Volterra integral equation, Theorem \ref{ConvEquiv_L}, and from
Proposition \ref{vie_L_result}.\\
The first statement is a consequence of the general results obtained in
\cite[Theorem 1]{Ama91},\cite[Theorem I.8.2.]{K12}, or \cite{S95}. 
They can be applied analogously as in \cite{GM97}. 
We have shown in Proposition \ref{spec} and in our discussion of the
Popov set that 
$$
\sigma \bigl( A_{\beta_{1}(x_0,L)}\bigr)\cap i\mathbb{R}=\big\{ \pm
i\omega_1(L,x_0) \big\}.
$$
The non-degeneracy condition for the crossing of the
imaginary axis by the complex conjugate pair of eigenvalues of the
operator $-A_{\beta}$ needs to be checked to conclude the proof. 
We need to verify that  
$$
\frac{d}{d\beta} \operatorname{Re} \bigl[\lambda(\beta) \bigr]\,\Big
|_{\beta=\beta_{1}(x_0,L)} > 0,
$$   
where, for some $\varepsilon>0$, there exists 
$$  
\lambda:\Bigl(\beta_{1}(x_0,L)-\varepsilon,\beta_{1}(x_0,L)+\varepsilon
\Bigr)\to\mathbb{C}\text{ with }\lambda\bigl(\beta_{1}(x_0,L)\bigr)
=+i\omega_1(L,x_0),
$$
i.e. a local parametrization of the eigenvalue's path as it crosses the imaginary axis in 
the complex upper halfplane as $\beta$ increases. 
This follows from Proposition \ref{betaCritical-Lcase} and Remark \ref{L_finite_Rem}.  
\end{proof}

\begin{rem}\label{nogapveri}
Supported by numerical evidence, we conjecture that the definition
\eqref{beta1hatL} leads to
$$
\widehat{\beta_1}(x_0,L)= \beta_1 (x_0,L)=-\frac{1}{G_{L,x_0}\bigl(
  i\,\omega_1(L,x_0)\bigr)}\text{ with }\omega_1(L,x_0)=\min
\Omega^{Pop}_{L,x_0}.
$$
We note that one inequality $\widehat{\beta_1}(x_0,L) \leq \beta_1
(x_0,L)$ follows from the knowledge that the asymptotic stability of
the equilbrium $u=0$ is lost at $\beta_1 (x_0,L)$ due to the Hopf
bifurcation. Thus proving this conjecture reduces to showing
$\widehat{\beta_1}(x_0,L) \geq \beta_1 (x_0,L)$. This, in turn, would
follow if we could verify that the value of $M(x_0,L)$ in the
definition \eqref{beta1hatL} is achieved by $\omega_1(L,x_0)$ as a
maximizer. Clearly, if one were able to prove this statement then
Conjecture \ref{PopDiriConj} would no longer be needed. In that case
the parameter range of global stability
$(0,\widehat{\beta_1}(x_0,L)\bigr)$ would be maximal due to  
$\widehat{\beta_1}(x_0,L) = \beta_1 (x_0,L)$, i.e. the interval of
stability constructed for the Volterra integral equation then extends
up to the critical parameter value where the Hopf bifurcation occurs.
\end{rem}

To discuss the stability of the bifurcating periodic solutions for the
one-parameter family of semiflows
$\bigl(\Phi_\beta,\operatorname{H}_L^1\bigr),\:\beta>0$, observe
that the Ljapunov-Schmidt reduction used in \cite{Ama90b} to discuss
the Hopf bifurcation phenomenon in the finite dimensional case leads
to more precise statements about the structure of the bifurcating periodic
solutions. In particular, the local uniqueness of the bifurcating
solutions can be described in more detail and is made explicit in the
next remark. As highlighted in \cite{Ama90b} the Liapunov-Schmidt
reduction, which the author applies to ODEs, can often be extended
naturally to semi-flows in infinite dimensional phase spaces stemming
from reaction-diffusions problems. We refrain from executing that
approach here and refer to \cite{Ama91}. 
Instead we prefer to apply the results on the existence of a center
manifold in the infinite dimensional situation. In fact, our problem,
for $L<\infty$, formulated in the Sobolev space $\operatorname{H}_L^1$
falls into the rather general class of quasilinear parabolic systems
discussed in \cite{S95}. The possibility to restrict our semiflow to
its finite dimensional center manifold, allows us to discuss the
stability of the bifurcating solutions by studying the ODE that
governs the dynamics on the center manifold.

\begin{rem}\label{Precise_Hopf}
For the stability analysis of the bifurcating periodic solutions the results in 
\cite[Theorems 26.21 and 27.11]{Ama90b} 
provide a more precise description of the local structure at the
bifurcation locus. There exists $\varepsilon>0$ and a map
$$
\Big[ s \mapsto (u(s),T(s),\beta(s)) \Big] 
\in  C^{\infty}
\Big( 
(-\varepsilon,\varepsilon) \;,\;\delta\mathbb{B}_{H^1_L}(0)  
\times  \delta\mathbb{B}_{\mathbb{R}}(\frac{2\pi}{\omega_1(x_0,L)})  
\times \delta\mathbb{B}_{\mathbb{R}}(\beta_1(x_0,L)) 
\Big)
$$
with 
$$
\bigl(u(0),T(0),\beta(0)\bigr)=\bigl(0,\frac{2\pi}{\omega_1(x_0,L)},\beta_1(x_0,L)\bigr)
\in H^1_L \times(0,\infty)\times(0,\infty),
$$ 
for some suitably chosen factor $\delta >0$, that shrinks the open
unit balls appropriately. The above map has the property that, for
$0<s<\varepsilon$, the orbit of $u(s)$ under $\Phi_{\beta(s)}$ denoted by 
$$
\gamma(s):=\Big\{ \Phi_{\beta(s)}(t,u(s)) \,\big |\, t\geq 0 \Big\}
$$
is a noncritical periodic orbit of the semiflow
$\bigl(\Phi_{\beta(s)},\operatorname{H}_L^1\bigr)$ with period $T(s)$
passing through the point $u(s)\in \delta\mathbb{B}_{H^1_L}(0)$ and with
\begin{equation}\label{injective}
\gamma(s_1)\neq \gamma(s_2),
\end{equation}
for $0<s_1<s_2<\varepsilon$.
Every noncritical periodic orbit of the semiflow
$\bigl(\Phi_{\beta(s)},\operatorname{H}_L^1\bigr)$ in a sufficiently
small neighbourhood of
$\bigl(0,\frac{2\pi}{\omega_1(x_0,L)},\beta_1(x_0,L)\bigr)$ in the
cartesian product $H^1_L \times(0,\infty)\times(0,\infty)$ is
contained in the family
$$
\big\{ \gamma (s) \,\big |\, 0<s<\varepsilon \big\}.
$$
The map  
$$
\big[ s \mapsto \beta(s) \big] \,:\, (0,\varepsilon)
\rightarrow(0,\infty)
$$ 
is injective. This follows directly from \eqref{injective}, since
otherwise identical noncritical periodic orbits  for $s_1 \neq s_2$
could be obtained from $\beta(s_1)=\beta(s_2)$ and the identity of the
semiflows $\bigl(\Phi_{\beta(s_i)},\operatorname{H}_L^1\bigr)$, $i=1,2$.
\end{rem}

\begin{thm}\label{Stability_Periodic_L}
Fix arbitrary $L>x_0>0$ and assume that, for any 
$\beta \in \bigl(0,\beta_1 (x_0,L)\bigr)$, the trivial solution of the
semiflow $\bigl(\Phi_{\beta},\operatorname{H}_L^1\bigr)$ is globally
asymptotically stable. Then the noncritical periodic orbits of the semiflow 
$\bigl(\Phi_{\beta},\operatorname{H}_L^1\bigr)$ originating from the
Hopf bifurcation at $\beta_1(x_0,L)>0$ are (orbitally) stable for any  
$\beta\in \bigl(\beta_1(x_0,L),\beta_1(x_0,L)+\delta\bigr)$ for some
$\delta>0$.  
In fact, using the map $\big[ s \mapsto \beta(s) \big]$ discussed in
Remark \ref{Precise_Hopf}, it holds that 
$$
\dot{\beta}(s)>0,
$$
for $0<s<\varepsilon(\delta)$, which means that the Hopf bifuration at
$\beta_1(x_0,L)$ is supercritical.       
\end{thm}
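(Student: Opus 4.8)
The plan is to reduce \eqref{lineTs}$_{L,x_0}$ near the bifurcation locus to a two–dimensional center manifold, to apply the classical exchange–of–stability principle for the Hopf bifurcation to the resulting planar flow, and to use the assumed global asymptotic stability of the trivial solution for $\beta<\beta_1(x_0,L)$ to fix the direction of branching. \emph{Step 1 (reduction).} For $L<\infty$ the semiflow $\bigl(\Phi_\beta,\operatorname{H}^1_L\bigr)$ is of the quasilinear parabolic type treated in \cite{S95}, and the spectral analysis of Section \ref{SecLinear} (Proposition \ref{spec}, Proposition \ref{betaCritical-Lcase}) shows that for $\beta$ near $\beta_1(x_0,L)$ the linearization $-A_{\beta,L}$ carries exactly the simple pair $\pm i\omega_1(L,x_0)$ on the imaginary axis with the rest of its spectrum uniformly in a left half–plane. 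Hence there is a locally invariant, exponentially attracting, two–dimensional $C^\infty$ center manifold $\mathcal{M}^c_\beta\subset\operatorname{H}^1_L$ through the origin, on which the semiflow is conjugate to a planar $C^\infty$ vector field $Z_\beta$. By the reduction principle, (asymptotic) stability of the trivial equilibrium for \eqref{lineTs}$_{L,x_0}$, orbital (asymptotic) stability of a periodic orbit lying in $\mathcal{M}^c_\beta$, and the value of its nontrivial Floquet exponent are all detected on $Z_\beta$; by the local uniqueness of Remark \ref{Precise_Hopf} the bifurcating orbits $\gamma(s)$ live in $\mathcal{M}^c_{\beta(s)}$.

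\emph{Step 2 (exchange of stability).} Put $Z_\beta$ in Poincar\'e--Birkhoff normal form. The eigenvalue pair $\lambda(\beta)=\alpha(\beta)\pm i\omega(\beta)$ crosses transversally, $\frac{d}{d\beta}\operatorname{Re}\lambda(\beta)\big|_{\beta=\beta_1(x_0,L)}>0$, by Proposition \ref{betaCritical-Lcase} and Remark \ref{L_finite_Rem}. The planar Hopf analysis in the $C^\infty$ form of \cite[Theorems 26.21 and 27.11]{Ama90b}, applied to $Z_\beta$ as in \cite{Ama91}, then yields the expansions $\beta(s)=\beta_1(x_0,L)+\beta_2 s^2+O(s^4)$ and $\rho(s)=\rho_2 s^2+O(s^4)$ for the nontrivial Floquet exponent $\rho(s)$ of $\gamma(s)$, with $\rho_2=-c\,\bigl(\frac{d}{d\beta}\operatorname{Re}\lambda\bigr)\big|_{\beta=\beta_1(x_0,L)}\,\beta_2$ for a positive constant $c$; in particular $\gamma(s)$ is orbitally asymptotically stable iff $\beta_2>0$ iff $\dot\beta(s)>0$ for small $s>0$, i.e. iff the bifurcation is supercritical. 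It therefore suffices to prove that the branch is supercritical.

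\emph{Step 3 (global stability fixes the direction).} If $\beta_2<0$, then $\beta(s)<\beta_1(x_0,L)$ for small $s>0$ and $\gamma(s)$ would be a nontrivial periodic orbit of $\bigl(\Phi_{\beta(s)},\operatorname{H}^1_L\bigr)$ at a parameter in $\bigl(0,\beta_1(x_0,L)\bigr)$, contradicting the assumed global asymptotic stability of the origin there, since a globally attracting equilibrium admits no other $\omega$–limit set. Hence $\beta_2\ge 0$. Moreover, by Remark \ref{Precise_Hopf} the map $s\mapsto\beta(s)$ is injective on $(0,\varepsilon)$, hence — being $C^\infty$ on an interval — strictly monotone; a decreasing branch would again place periodic orbits below $\beta_1(x_0,L)$, so it is strictly increasing, and together with $\beta_2\ge 0$ and the non-degeneracy of the bifurcation (i.e. $\beta_2\ne 0$, which if needed can be checked directly on $Z_{\beta_1(x_0,L)}$) one obtains $\beta_2>0$. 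Thus $\dot\beta(s)>0$ for $0<s<\varepsilon(\delta)$ after shrinking $\varepsilon$, and by Step 2 the orbits $\gamma(s)$ are orbitally stable for $\beta\in\bigl(\beta_1(x_0,L),\beta_1(x_0,L)+\delta\bigr)$.

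\emph{Main obstacle.} The delicate point is Step 1: verifying that $\bigl(\Phi_\beta,\operatorname{H}^1_L\bigr)$ genuinely fits the framework of \cite{S95} — the required maximal regularity, the smoothness of the superposition map $u\mapsto f\bigl(\beta u(x_0)\bigr)\delta_0$ on the relevant interpolation scale, and the uniform spectral gap off $\pm i\omega_1(L,x_0)$ supplied by Section \ref{SecLinear} — and that the reduced planar flow $Z_\beta$ really inherits the Floquet data of $\gamma(s)$, so that the finite–dimensional exchange–of–stability identity of Step 2 may be transported to $\operatorname{H}^1_L$. By contrast, Step 3, where the global stability hypothesis forbids a sub- or vertical branch, is soft, and the normal–form bookkeeping of Step 2 is routine.
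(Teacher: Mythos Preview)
Your proposal is correct and follows essentially the same route as the paper. The decisive step is your Step 3: injectivity of $s\mapsto\beta(s)$ from Remark \ref{Precise_Hopf} gives strict monotonicity, a decreasing branch is ruled out by the global stability hypothesis on $\bigl(0,\beta_1(x_0,L)\bigr)$, and supercriticality then yields orbital stability by exchange of stability. This is exactly the paper's argument; your Steps 1 and 2 simply unpack the center–manifold reduction and the exchange–of–stability mechanism that the paper outsources to Remark \ref{Precise_Hopf} and the references \cite{Ama90b,Ama91,S95,K12}.

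One small comment: your detour through the quadratic coefficient $\beta_2$ and the identity $\rho_2=-c\,\alpha'(\beta_1)\,\beta_2$ manufactures an apparent obligation to verify $\beta_2\neq 0$ ``directly on $Z_{\beta_1(x_0,L)}$'', which you do not actually carry out. The paper avoids this by arguing purely from the monotonicity of $s\mapsto\beta(s)$: once the branch is strictly increasing, supercriticality holds, and the cited exchange–of–stability theorems deliver orbital stability without ever isolating $\beta_2$. So you can drop the $\beta_2\neq 0$ clause and close the argument at strict monotonicity, which makes your proof both shorter and aligned with the paper's.
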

\begin{proof}
The map  $\big[ s \mapsto \beta(s) \big]: (0,\varepsilon)
\rightarrow(0,\infty)$ is continuous and injective for
$0<s<\varepsilon$. Hence it is strictly monotone on $(0,\varepsilon)$.  
Since $\beta(\cdot)$ is differentiable either $\dot{\beta}(s)<0$ or
$\dot{\beta}(s)>0$ must hold for $s\in(0,\varepsilon)$. The case
$\dot{\beta}(s)<0$ can be excluded since it implies the existence of a
noncritical periodic orbit of the semiflow
$\bigl(\Phi_{\beta},\operatorname{H}_L^1\bigr)$ for
$\beta<\beta_1(x_0,L)$.  Since this cannot happen by Theorem
\ref{Main_Result_L} and by the assumption that $\bigl(0,\beta_1
(x_0,L)\bigr)$ is an interval of global stability for the trivial
equilibrium, we conclude that $\dot{\beta}(s)>0$ and that 
the bifurcating noncritical periodic orbits are stable. 
\end{proof}

\begin{rem}\label{DropStabAssump}
The assumption that the trivial solution of the semiflow
$\bigl(\Phi_{\beta},\operatorname{H}_L^1\bigr)$ is globally
asymptotically stable for any $\beta \in \bigl(0,\beta_1 (x_0,L)\bigr)$  
can be dropped if either Conjecture \ref{PopDiriConj} were shown to be
true or if the condition $\widehat{\beta_1}(x_0,L) = \beta_1 (x_0,L)$
discussed in Remark \ref{nogapveri} were shown to hold.
\end{rem} 

The next result settles a conjecture formulated in \cite[Remarks
4.4. (c)]{GM97} for the problem \eqref{tsEq}. It was not stated in
\cite{GM20} even though, in the light of the above results, it is an
immediate corollary to \cite[Theorem 5.1.]{GM20}. We add the result
here for the sake of completeness and due to the fact that the
conjecture in \cite{GM97} provided the initial motivation for both
\cite{GM20} and the present paper. 

\begin{thm}\label{Stability_Periodic_Neumann}
The noncritical periodic orbits of the semiflow
$\bigl(\Phi_{\beta},\operatorname{H}^1(0,\pi)\bigr)$ associated with
\eqref{tsEq} that originate from the Hopf bifurcation at $\beta_0\approx 5.6655$ are (orbitally) stable for 
$\beta\in (\beta_0,\beta_0 +\delta)$ for some $\delta>0$. In other
words, the Hopf bifuration from the trivial solution at
$\beta=\beta_0$ is supercritical.       
\end{thm}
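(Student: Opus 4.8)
The plan is to transfer, almost verbatim, the three-step scheme used above for the Dirichlet problem to the Neumann feedback-control problem \eqref{tsEq}, and then to run the abstract bifurcation argument from the proof of Theorem \ref{Stability_Periodic_L}. Following the Introduction, \eqref{tsEq} is weakly written as $\dot u+Au=-\tanh\bigl(\beta u(\cdot,\pi)\bigr)\delta_0$ in $\operatorname{H}^{-1}=\operatorname{H}^1(0,\pi)'$, with $A$ induced by $a(u,v)=\int_0^\pi u_xv_x\,dx$; evaluating the variation-of-constants formula at $x=\pi$ gives the scalar Volterra equation
$$
 y(t)=g(t)+\int_0^t a(t-\tau)\,\tanh\bigl(\beta y(\tau)\bigr)\,d\tau,\qquad y(t):=u(t,\pi),
$$
whose kernel $a$ is $-\bigl(e^{-tA}\delta_0\bigr)(\pi)$ and whose transfer function is $G(s)=-\mathcal{L}(a)(s)=\bigl(\sqrt{s}\,\sinh(\sqrt{s}\,\pi)\bigr)^{-1}$. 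This is exactly the integral equation treated in \cite{GM20}: its Popov curve $\omega\mapsto\bigl(\operatorname{Re}G(i\omega),\,\omega\operatorname{Im}G(i\omega)\bigr)$ first meets the negative real axis at a frequency $\omega_0$ (the smallest positive root of $\tan(\pi\sqrt{\omega/2})=-\tanh(\pi\sqrt{\omega/2})$) with $\beta_0=-1/G(i\omega_0)\approx5.6655$, and \cite[Theorem 5.1]{GM20} asserts that for every $\beta\in(0,\beta_0)$ a separating straight line through $-1/\beta$ with positive slope can be fitted to the right of the curve. Via the Parseval--Plancherel device underlying Lemma \ref{ParPlanCriterion} and Proposition \ref{Prop_stab_vie} this forces $y(t)\to0$, and then the Neumann counterpart of Proposition \ref{ConvEquiv_L} --- namely \cite[Proposition 2.3]{GM20} --- upgrades this to $\Phi_\beta(t,u_0)\to0$ in $\operatorname{H}^1(0,\pi)$ for every $u_0$; that is, $u\equiv0$ is globally asymptotically stable on $(0,\beta_0)$.

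With that input the remaining argument is identical to the proof of Theorem \ref{Stability_Periodic_L}. The linearisation at the trivial state is the operator $-A_\beta$ analysed in \cite{GM97}; its spectrum meets $i\mathbb{R}$ at $\beta=\beta_0$ in exactly one conjugate pair $\pm i\omega_0$, and the transversality condition $\frac{d}{d\beta}\operatorname{Re}\lambda(\beta)|_{\beta=\beta_0}>0$ is checked there. Hence the Hopf theorems of \cite{Ama90b,Ama91,S95,K12}, applied on the finite-dimensional center manifold as in Remark \ref{Precise_Hopf}, furnish a smooth family $s\mapsto\bigl(u(s),T(s),\beta(s)\bigr)$ of noncritical periodic orbits with $\beta(0)=\beta_0$ exhausting all small periodic orbits, with $s\mapsto\beta(s)$ continuous and injective, hence strictly monotone. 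A branch with $\dot\beta(s)<0$ would produce a periodic orbit of $\bigl(\Phi_\beta,\operatorname{H}^1(0,\pi)\bigr)$ at some $\beta<\beta_0$, contradicting the global attractivity of $u\equiv0$ there; therefore $\dot\beta(s)>0$, the bifurcation is supercritical, and the bifurcating orbits are orbitally stable. This is precisely why the statement is ``an immediate corollary'' once Theorem \ref{Main_Result_L} (here, \cite[Theorem 5.1]{GM20}) is in place.

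The only genuinely non-routine ingredient is the Popov verification up to $\beta_0$ for $G(s)=\bigl(\sqrt{s}\,\sinh(\sqrt{s}\,\pi)\bigr)^{-1}$, which I would not redo: it is harder than its Dirichlet analogue because the Neumann semigroup does not decay, so the forcing $g(t)=\bigl(e^{-tA}u_0\bigr)(\pi)$ need not tend to $0$ and the bound on the Lyapunov-type functional $V$ of Lemma \ref{lembound_V} must be recovered by a different estimate, exploiting the exponential decay of $g(t)-\overline{u_0}$ together with the mass balance $\frac{d}{dt}\int_0^\pi u\,dx=-\tanh(\beta u(\pi))$. All of this is contained in \cite[Theorem 5.1]{GM20}, and everything else is the same soft ``exclude the subcritical branch by global stability below threshold'' bookkeeping already used for Theorem \ref{Stability_Periodic_L}.
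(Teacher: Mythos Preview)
Your proof is correct and follows exactly the paper's own route: invoke \cite[Theorem 5.1]{GM20} for the global attractivity of the trivial equilibrium on $(0,\beta_0)$, and then repeat the monotonicity-of-$\beta(s)$ argument from the proof of Theorem \ref{Stability_Periodic_L} to exclude a subcritical branch. The additional details you supply about the Neumann transfer function and the Popov verification are faithful to \cite{GM20} but are not needed beyond the citation, which is all the paper itself uses.
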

\begin{proof}
The statement follows analogously as in the proof of Theorem \ref{Stability_Periodic_L} since 
the trivial critical point of the semiflow
$\bigl(\Phi_{\beta},\operatorname{H}^1(0,\pi)\bigr)$ is shown to be
globally attractive on the maximal interval of stability, i.e. for
$\beta\in (0,\beta_0)$ in \cite[Theorem 5.1.]{GM20}.
\end{proof}

%%%%%%%%%%%%%%%%%%%%%%%%%%%%%%%%%%%%%%%%%%%%%%%%%%%%
%%%% Section 7 Numerical Calculations
%%%%%%%%%%%%%%%%%%%%%%%%%%%%%%%%%%%%%%%%%%%%%%%%%%%%
\section{Implementation used in the numerical calculations}\label{SecNumerical}
In order to generate Figures \ref{1stEfct}, \ref{efctT}, and
\ref{crossing}, we made used of a discretization of the operator
$A^L_\beta$ which is described in this section. As
for Figures \ref{L2infty} and \ref{merging}, the computations are
based on the zeros of the spectrum determining functions $z_L$
found in \eqref{z-fct} inside the proof of Proposition
\ref{betaCritical-Lcase}, and on \eqref{eigEq}, respectively.
%As for Figure \ref{L2infty} we used the explicit equation $1+\beta
%G^L_{\cdot}=0$ determining the eigenvalues.
Since $A_{L,\beta}=A_L+\beta \delta_0\delta_{x_0}^\top$ and
$A_L=A_{L,0}$ has an explicit spectral resolution in 
terms of its eigenvalues $\mu^k_{L,0}=\frac{\pi^2k^2}{4L^2}$, $k\in
\mathbb{N}$, and eigenfunctions $\varphi_{k,L}=\frac{1}{\sqrt{L}}\sin
\bigl(k\pi\,\frac{x+L}{2L}\bigr)$, we opt for a spectral
discretization. In order to obtain it, we introduce the grid of
equidistant points 
$x^m=(x^m_k)_{k=1,\dots 2^m-1}$ given by
$$
 x^m_k=-L+k \frac{2L}{2^m},\: k=1,\dots, 2^m-1,
$$
and the discrete sine transform matrix $S_m$ with entries
$$
 S_m(k,j)=\varphi_{k,L}(x^m_j).
$$
Then we approximate $A_L$ spectrally by
$$
 A_L^m= \frac{2L}{2^m}\, S_m^\top
 \operatorname{diag}\bigl [\frac{k^2\pi^2}{4L^2}\bigr] _{k=1,\dots,2^m-1}
 \, S_m,
$$
where $S_m^\top=S_m^{-1}$ and the scalar factor amounts to the
application of the quadrature rule (the trapezoidal rule in this case)
required in the discrete transform to approximate the corresponding
continuous integral. The Dirac distribution supported at $y\in (-L,L)$ is also
discretized spectrally as
$$
 \delta^m_y=\sum _{k=1}^{2^m-1}\varphi _{k,L}(y) \varphi _{k,L}(x^m).
$$
This yields a spectral approximation through
$$
 \langle \delta_y, u\rangle
 _{\operatorname{H}^{-1}_L,\operatorname{H}^1_L} \simeq
 \frac{2L}{2^m}\bigl( \delta^m_y \bigr) ^\top u^m,
$$
if $u^m$ is the vector approximating $u\in
\operatorname{H}^1_L$. Again the scalar factor is dictated by the
quadrature rule used to approximate the duality pairing. Finally the
operator $A_{L,m}$ of interest is approximated by
$$
 A^m_{L,\beta}=A^m_L+\beta\frac{2L}{2^m}\delta^m_0 \bigl( \delta
 ^m_{x_0}\bigr)^\top,
$$
and its adjoint by the transpose $\bigl( A^m_{L,\beta}\bigr)
^\top$. Again, this discretization is used for the spectral calculations
leading to Figures \ref{1stEfct}, \ref{efctT}, and \ref{crossing}.
%\bibliography{../../lite.bib}
\bibliography{lite}
% This command eventually does generate the bibliography in my local
% LaTex installation.
\end{document}